\ProcessOptions \RequirePackage{amsmath}
\def\ep{\varepsilon}
\newcommand{\hg}{\mathcal{H}_{g}}
\newcommand{\D}{\mathbb{D}}
\newcommand{\M}{\mathcal{M}}
\newcommand{\T}{\mathbb{T}}
\newcommand{\N}{\mathbb{N}}
\renewcommand{\H}{\mathcal{H}}
\newcommand{\vp}{\vp}
\newcommand{\og}{\mathrm{O}}
\newcommand{\op}{\mathrm{o}}
\def\a{\alpha}       \def\b{\beta}        \def\g{\gamma}
       \def\De{{\Delta}}    \def\e{\varepsilon} \def\Denw{{\Delta}^{\om}_{n}}
     \def\om{\omega}      
       \def\t{\theta}       
\def\p{\psi}         \def\r{\rho}         
                  \def\vp{\varphi}
 \def\vr{\varrho}
\def\BMOA{\mathord{\rm BMOA}}
\def\R{{\mathcal R}}
\def\I{{\mathcal I}}
\def\C{{\mathbb C}}
\DeclareMathOperator{\supp}{supp}
\newcommand{\h}{\mathcal{H}}
\newcommand{\hti}{\widetilde{\mathcal{H}}}
\newcommand{\ho}{\widehat{\omega}}
\newcommand{\Mp}{{\mathcal M}_p}
\newtheorem{theorem}{Theorem}
\newtheorem{lemma}[theorem]{Lemma}
\newtheorem{proposition}[theorem]{Proposition}
\newtheorem{corollary}[theorem]{Corollary}
\newtheorem{lettertheorem}{Theorem}
\newtheorem{lettercorollary}[lettertheorem]{Corollary}
\newtheorem{letterlemma}[lettertheorem]{Lemma}
\theoremstyle{definition}
\theoremstyle{remark}
\theoremstyle{remarks}
\begin{document}
\title[Generalized Hilbert operators on weighted Bergman spaces] {Generalized Hilbert operators on weighted Bergman spaces}

\author{Jos\'e \'Angel Pel\'aez}

\address{Departamento de An´alisis Matem´atico, Universidad de M´alaga, Campus de
Teatinos, 29071 M´alaga, Spain} \email{japelaez@uma.es}

\thanks{This research was supported in part by the Ram\'on y Cajal program
of MICINN (Spain), Ministerio de Edu\-ca\-ci\'on y Ciencia, Spain,
(MTM2011-25502), from La Junta de Andaluc{\'i}a, (FQM210) and
(P09-FQM-4468), MICINN- Spain ref.
MTM2011-26538}

\author{Jouni R\"atty\"a}
\address{University of Eastern Finland, P.O.Box 111, 80101 Joensuu, Finland}
\email{jouni.rattya@uef.fi}

\date{\today}

\subjclass[2010]{Primary 30H20; Secondary 47G10}

\keywords{
Generalized Hilbert operator,  weighted Bergman space,
Muckenhoupt weight, regular weight, rapidly increasing weight}

\begin{abstract}
The main purpose of this paper is to study the generalized Hilbert
operator
    \begin{equation*}
    \mathcal{H}_g(f)(z)=\int_0^1f(t)g'(tz)\,dt
    \end{equation*}
acting on the weighted Bergman space $A^p_\om$, where the weight
function $\om$ belongs to the class $\R$ of regular radial weights
and satisfies the Muckenhoupt type condition
    \begin{equation}\label{Mpconditionaabstract}
    \sup_{0\le r<1}\bigg(\int_{r}^1\left(\int_t^1\om(s)ds\right)^{-\frac{p'}{p}}\,dt\bigg)^\frac{p}{p'}
    \int_{0}^r(1-t)^{-p}\left(\int_t^1\om(s)ds\right)\,dt<\infty. \tag{\dag}
    \end{equation}
If $q=p$, the condition on $g$ that characterizes the boundedness
(or the compactness) of $\hg: A^p_\om\to A^q_\om$ depends on $p$
only, but the situation is completely different in the case $q\ne
p$ in which the inducing weight $\om$ plays a crucial role. The
results obtained also reveal a natural connection to the
Muckenhoupt type condition \eqref{Mpconditionaabstract}. Indeed,
it is shown that the classical Hilbert operator (the case
$g(z)=\log\frac{1}{1-z}$ of $\H_g$) is bounded from
$L^p_{\int_{t}^1\om(s)\,ds}\left([0,1)\right)$ (the natural
restriction of $A^p_\om$ to functions defined on $[0,1)$) to
$A^p_\om$ if and only if $\om$ satisfies the condition
\eqref{Mpconditionaabstract}. On the way to these results
decomposition norms for the weighted Bergman space $A^p_\om$ are
established.

\end{abstract}
\maketitle

\section{Introduction}\label{sec:lacunary}

Let $\H(\D)$ denote the space of all analytic functions in the
unit disc $\D$ of the complex plane $\C$. A function $\omega:\D\to
(0,\infty)$, integrable over $\D$, is called a \emph{weight
function}\index{weight function} or simply a
\emph{weight}\index{weight}. It is \emph{radial}\index{radial
weight} if $\omega(z)=\omega(|z|)$ for all $z\in\D$. For
$0<p<\infty$ and a weight $\omega$, the \emph{weighted Bergman
space} \index{weighted Bergman space}
$A^p_\omega$\index{$A^p_\om$} consists of those $f\in\H(\D)$ for
which
    $$
    \|f\|_{A^p_\omega}^p=\int_\D|f(z)|^p\omega(z)\,dA(z)<\infty,\index{$\Vert\cdot\Vert_{A^p_\omega}$}
    $$
where $dA(z)=\frac{dx\,dy}{\pi}$ \index{$dA(z)$}is the normalized
Lebesgue area measure on $\D$. As usual, we write
$A^p_\alpha$\index{$A^p_\alpha$} for the classical weighted
Bergman space induced by the standard radial weight
$\omega(z)=(1-|z|^2)^\alpha$ with $-1<\alpha<\infty$.

Every $g\in\H(\D)$ induces the \emph{generalized Hilbert operator}
$\hg$, defined by
    \begin{equation}\label{H-g}
    \mathcal{H}_g(f)(z)=\int_0^1f(t)g'(tz)\,dt, \quad f\in\H(\D).
    \end{equation}
The sharp condition
    \begin{equation}\label{99}
    \int_{0}^1\left(\int_t^1\om(s)ds\right)^{-\frac{1}{p-1}}dt<\infty,\quad
    p>1,
    \end{equation}
ensures that the integral in \eqref{H-g} defines an analytic
function on $\mathbb{D}$ for each $f\in A^p_\om$. The choice
$g(z)=\log\frac{1}{1-z}$ in \eqref{H-g} gives an integral
representation of the \emph{classical Hilbert operator} $\h$. The
Hilbert operator $\h$ is a prototype of a Hankel operator which
has attracted a considerable amount of attention during the last
years in operator theory on spaces of analytic functions.
Questions related to the boundedness, the operator norm and the
spectrum of $\h$ have been studied in \cite{AlMonSa,DiS,Di,DJV}.
These studies reveal a natural connection from $\h$ to the
weighted composition operators, the Szeg\"{o} projection and the
Legendre functions of the first kind. For further information on
$\H$, the reader is invited to see the recent monograph
\cite[Chapter~13]{ArJeVu}.

The primary purpose of this paper is to study the operator $\hg$
acting on the weighted Bergman space $A^p_\om$ induced by a radial
weight $\om$. We are particularly interested in basic properties
such as the question of when $\hg:A^p_\om\to A^q_\om$ is bounded
or compact.

As far as we know, the generalized Hilbert operator $\hg$ has not
been extensively studied in the existing literature. The operator
was introduced recently in \cite{GaGiPeSis}, where it was shown,
among other things, that the membership of the analytic symbol $g$
to the mean Lipschitz space $\Lambda\left(p,\frac{1}{p}\right)$
characterizes the boundedness of $\hg$ on the Bergman space
$A^p_\alpha$ ($-1<p-2<\a<\infty$), on the Hardy space $H^p$
($1<p\le 2$) and also on certain Dirichlet type spaces. The proofs
of these results are based on the identity $\left(\hg\right)'(f)=
\mathcal{H}_{g'}(zf)$ together with properties of the maximum
modulus and the smoothness of the sequence of moments
$\left\{\int_{0}^1t^k |f(t)|\,dt\right\}_{k=0}^\infty$ of
functions in these spaces.

The approach we take to the study of $\hg$ allows us to determine
those analytic symbols $g$ for which $\hg: A^p_\om \to A^q_\om$,\,
$1<p,q<\infty$, is bounded or compact, provided the regular weight
$\om$ (see Section~\ref{mainresults} for the definition) satisfies
the Muckenhoupt type condition
    \begin{equation}\label{Mpcondition}
    \sup_{0\le r<1}  \bigg(\int_{r}^1\left(\int_t^1\om(s)ds\right)^{-\frac{p'}{p}}\,dt\bigg)^\frac{p}{p'}
    \int_{0}^r(1-t)^{-p}\left(\int_t^1\om(s)ds\right)\,dt<\infty.
    \end{equation}
By the classical results of Muckenhoupt~\cite{Muckenhoupt1972},
the condition \eqref{Mpcondition} characterizes those real
functions $v\in[0,1)\to(0,\infty)$ for which the Hardy type
operator $\int_t^1\frac{h(s)}{1-ts}\,ds$ is bounded on
$L^p_{\int_t^1|v(s)|\,ds} $. We mention that each
standard radial weight $\om(r)=(1-r^2)^\a$, $-1<p-2<\a<\infty$, is
regular and satisfies \eqref{Mpcondition}. Our results show the
interesting phenomenon that when the inducing powers of the domain
and the target spaces are equal, i.~e. $q=p$, then the weight
function $\om$ does not play any role in the condition on $g$ that
characterizes the boundedness (or the compactness) of $\hg:
A^p_\om \to A^q_\om$, although the description depends on $p$. However,  the situation is completely different in
the case $q\ne p$ in which the inducing weight $\om$ plays a
crucial role.

\section{Preliminaries and main results}\label{mainresults}

For $0<p\le\infty$, the \emph{Hardy space} $H^p$ consists of those
$f\in\H(\D)$ for which
    \begin{equation*}
    \|f\|_{H^p}=\lim_{r\to1^-}M_p(r,f)<\infty,\index{$\Vert\cdot\Vert_{H^p}$}
    \end{equation*}
where
    $$
    M_p(r,f)=\left (\frac{1}{2\pi }\int_0^{2\pi}
    |f(re^{i\theta})|^p\,d\theta\right )^{\frac{1}{p}},\quad 0<p<\infty,
    $$
 and
    $$
    M_\infty(r,f)=\max_{0\le\theta\le2\pi}|f(re^{i\theta})|.
    $$
Throughout the paper, the letter $C=C(\cdot)$ will denote a
constant whose value depends on the parameters indicated in the
parenthesis, and may change from one occurrence to another. We
will use the notation $a\lesssim b$ if there exists $C=C(\cdot)>0$
such that $a\le Cb$, and $a\gtrsim b$ is understood in an
analogous manner. In particular, if $a\lesssim b$ and $a\gtrsim
b$, then we will write $a\asymp b$.

The \emph{distortion function} of a radial weight
$\om:[0,1)\to(0,\infty)$ is defined by
    $$
    \psi_{\om}(r)=\frac{1}{\om(r)}\int_{r}^1\om(s)\,ds,\quad
    0\le r<1.
    $$
It was introduced in~\cite{Si} on the way to the Littlewood-Paley
formulas for weighted Bergman spaces. A radial weight $\om$ is
called {\em{regular}}, if it is continuous and its distortion
function satisfies
    \begin{equation}\label{eq:r0}
    \psi_\om(r)\asymp(1-r),\quad 0\le r<1.
    \end{equation}
The class of all regular weights is denoted by $\R$. For basic
properties and concrete examples of regular weights, see
\cite[Chapter~1]{PelRat} and \cite{Si}, and references therein. At
this point we settle to mention that each standard weight
$\om(r)=(1-r^2)^\a$ with $-1<\a<\infty$ is regular. From now on we
will use the notation $\widehat{\om}(r)=\int_r^1\om(s)\,ds$ so
that \eqref{eq:r0} ensures $\widehat{\om}(r)\asymp\om(r)(1-r)$ for
$\om\in\R$. Moreover, for each radial weight $\om$ we will write
$\om_\g(r)=(1-r)^\g\om(r)$,\, $-\infty<\g<\infty$, and $\om\in
\Mp$ if $\om$ satisfies the Muckenhoupt type condition
\eqref{Mpcondition}. We will write $\|T\|_{(X,Y)}$ for the norm of
an operator $T:X\to Y$, and if no confusion arises with regards to
$X$ and $Y$, we will simply write $\|T\|$.

Our study of $\hg$ on weighted Bergman spaces leads us to consider
other classes of weighted spaces. For $0<p\le \infty$,
$0<q<\infty$, $0\le \gamma<\infty$ and a radial weight~$\om$, the
\emph{mixed norm space} $H(p,q,\om_\g)$ consists of those
$g\in\H(\D)$ such that
    \begin{equation*}
    \left\|g\right\|^q_{H(p,q,\om_\g)}=\int_0^1 M^q_p(r,g)(1-r)^{\gamma}\om(r)\,dr<\infty.
    \end{equation*}
Moreover, if in addition $-\infty<\b<\infty$, we will denote $g\in
H(p,\infty,(\widehat{\om}^\b)_\g)$, whenever
    \begin{equation*}
    \left\|g\right\|^q_{H(p,\infty,(\widehat{\om}^\b)_\g)}=\sup_{0<r<1}M_p(r,g)(1-r)^\g \ho(r)^\b<\infty.
    \end{equation*}
We will simply write $H(p,q,\om)$ and
$H(p,\infty,\widehat{\om}^\b)$ if $\g=0$. It is clear that
$H(p,p,\om)=A^p_\om$. The mixed norm spaces play an essential role
in the closely related question of studying the coefficient
multipliers on Hardy and weighted Bergman spaces~\cite{ArJeVu}.

For given $1\le p<\infty$, $0<\a\le 1$ and $0\le \b<\infty$, we
say that $g\in\H(\D)$ belongs to $\Lambda\left(p,\alpha,
\ho^\b\right)$, if $g'\in H(p,\infty,(\widehat{\om}^{-\b})_{1-\alpha})$, that
is,
     \begin{equation*}
    \left\|g\right\|_{\Lambda\left(p,\alpha, \ho^\b\right)}=\sup_{0<r<1}\frac{M_p(r,g')(1-r)^{1-\alpha}}{\ho(r)^\b}+|g(0)|<\infty.
    \end{equation*}
Since $0<\a\le1$ and $0\le\b<\infty$, we have
$\Lambda\left(p,\alpha, \ho^\b\right)\subset H^p$, and therefore
each function $g\in\Lambda\left(p,\alpha, \ho^\b\right)$ has a
non-tangential limit $g(e^{i\theta})$ almost everywhere on the
unit circle $\T$. Indeed, if $\b=0$, then
$\Lambda\left(p,\alpha,\ho^\b\right)$ is nothing else but the
\emph{mean Lipschitz space} $\Lambda\left(p,\alpha\right)$ that
consists of those $g\in \H(\D)$ having non-tangential limits
$g(e^{i\theta})$ almost everywhere and for which
    $$
    \sup_{0<h\le t}\left(\int_0^{2\pi}
    |g(e^{i(\theta+h)})-g(e^{i\theta})|^p \frac{d\theta}{2\pi}\right)^{1/p}=\og(t^{\alpha}), \quad t\to
    0,
    $$
see a classical result of Hardy and
Littlewood~\cite[Theorem~5.4]{Duren1970}.

We will see that if $1<p<\infty$ and $\om\in\R\cap\Mp$, then
$\H_g:A^p_\om\to A^p_\om$ is bounded if and only if $g\in\Lambda
(p,\frac{1}{p})$. The spaces $\Lambda (p,\frac{1}{p})$ form a
nested scale contained in $\BMOA$ \cite{BSS}:
    $$
    \Lambda \left(q,\frac{1}{q}\right)\subset \Lambda  \left(p,\frac{1}{p}\right)\subset\BMOA,\quad 1\le q<p<\infty .
    $$
The absence of $\om$ in the condition on $g$ that characterizes
the boundedness does not come as a surprise in view of
\cite[Theorem $3$]{GaGiPeSis}. However, the situation is
completely different in the case $q\ne p$ in which the inducing
weight $\om$ plays a crucial role. In particular, if $q>p$, then
the space
$\Lambda\left(q,\frac{1}{p},\ho^{\frac{1}{p}-\frac{1}{q}}\right)$,
that can be described also by a growth condition on the modulus of
continuity of order $q$ of $g(e^{i\t})$ by
Proposition~\ref{pr:Lambdafrontera} below, comes naturally to the
picture.

Our main result on $\H_g$ reads as follows.

\begin{theorem}\label{mainth}
Let $1<p,q<\infty$, $\om\in\R\cap\Mp$ and $g\in \H(\D)$.
    \begin{enumerate}
    \item[\rm(i)] If $1<p\le q<\infty$, then $\hg: A^p_\om \to A^q_\om$ is bounded if and only if $g\in\Lambda\left(q,\frac{1}{p}, \ho^{\frac{1}{p}-\frac{1}{q}}\right)$.
Moreover, if $g\in \Lambda\left(q,\frac{1}{p},
\ho^{\frac{1}{p}-\frac{1}{q}}\right)$, then
    $$
    \|\hg\|_{\left(A^p_\om, A^q_\om\right)}\asymp\|g-g(0)\|_{\Lambda\left(q, \frac{1}{p}, \ho^{\frac{1}{p}-\frac{1}{q}}\right)}.
    $$
    \item[\rm(ii)] If $1<q<p<\infty$, then $\H_g:A^p_\om\to A^q_\om$ is
bounded if and only if $g'\in
H\left(q,s,\ho_{s\left(1-\frac{1}{q}\right)}\right)$, where
$\frac{1}{q}-\frac{1}{p}=\frac{1}{s}$. Moreover, if $g'\in
H\left(q,s,\ho_{s\left(1-\frac{1}{q}\right)}\right)$, then
    $$
    \|\hg\|_{\left(A^p_\om, A^q_\om\right)}\asymp\|g'\|_{H\left(q,s,\ho_{s\left(1-\frac{1}{q}\right)}\right)}.
    $$
    \end{enumerate}
\end{theorem}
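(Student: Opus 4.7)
The unifying plan is to combine the Littlewood--Paley identity
\[
\|F\|_{A^q_\om}^q \asymp |F(0)|^q + \int_\D |F'(z)|^q (1-|z|)^q \om(z)\, dA(z),
\]
available because $\om\in\R$, with the differentiated representation $(\hg f)'(z)=\int_0^1 t f(t) g''(tz)\, dt=\mathcal{H}_{g'}(zf)(z)$. Minkowski's inequality in $\theta$ then gives $M_q(r,(\hg f)')\le \int_0^1 t|f(t)| M_q(tr,g'')\, dt$, and a Hardy--Littlewood-type estimate $M_q(r,g'')\lesssim M_q((1+r)/2,g')/(1-r)$ replaces the second derivative by a comparable first-derivative bound, so that the hypotheses on $g$ enter directly through $M_q(\cdot,g')$.

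In case (i), the assumption $g\in\Lambda(q,1/p,\ho^{1/p-1/q})$ yields $M_q(s,g')\lesssim\ho(s)^{1/p-1/q}(1-s)^{1/p-1}$. Substituting into the Minkowski bound reduces the estimate for $\|\hg f\|_{A^q_\om}$ to a one-dimensional weighted integral inequality of Hilbert type with kernel $\ho(tr)^{1/p-1/q}(1-tr)^{1/p-2}$. Combined with a restriction embedding of the form $\int_0^1 |f(t)|^p \ho(t)\, dt\lesssim \|f\|_{A^p_\om}^p$ (an auxiliary lemma natural for $\om\in\R$ that the paper establishes), the hypothesis $\om\in\Mp$ supplies exactly the $L^p$--$L^p$ boundedness needed when $p=q$, and, after one further H\"older step in the $r$-integral, the $L^p$--$L^q$ version needed when $p<q$. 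For case (ii), $q<p$, I would instead apply H\"older in $t$ with $1/s=1/q-1/p$ inside the Minkowski bound, splitting the inner integral into an $L^p$ piece dominated by $\|f\|_{A^p_\om}$ and an $L^s$ piece whose subsequent integration in $r$ against $(1-r)^q\om(r)$ reproduces exactly $\|g'\|_{H(q,s,\ho_{s(1-1/q)})}^s$, yielding the claimed equivalence.

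For the converse direction, I would test $\hg$ on a family of normalized functions chosen to extract the correct quantity for $g$. In (i) the natural choice is $f_a(z)=(1-\overline a z)^{-\gamma}$ with $\gamma$ tuned so that $\|f_a\|_{A^p_\om}\asymp 1$; evaluating $\hg f_a$ near $z=a$ and using the explicit growth of Bergman-type kernels forces a pointwise bound on $M_q(\cdot,g')$ of the required shape. In (ii) a lacunary superposition $\sum c_n f_{a_n}$ with random signs and Khinchine's inequality converts the $\ell^s$-summability built into $H(q,s,\ho_{s(1-1/q)})$ into the required operator-norm lower bound. The main technical obstacle is the one-dimensional reduction in (i): placing the $\ho$-twisted Hilbert-type kernel into the Muckenhoupt framework \eqref{Mpcondition} and then upgrading the resulting radial inequality to a genuine $A^p_\om$--$A^q_\om$ bound, which is where the decomposition norms for $A^p_\om$ announced in the abstract play their decisive role; the remaining pieces, such as the Hardy--Littlewood passage from $g''$ to $g'$ and the regularity $\ho((1+r)/2)\asymp\ho(r)$, are routine consequences of $\om\in\R$.
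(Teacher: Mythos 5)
Your sufficiency sketch is plausible only in part. For $p=q$ the reduction via Littlewood--Paley, Minkowski and the Hardy--Littlewood passage from $g''$ to $g'$ does land on exactly the radial inequality that the paper proves in Theorem~\ref{th:gorro} (the $I_1+I_2$ Muckenhoupt splitting), so that case could be closed along your lines. But for $p<q$ the step ``one further H\"older step in the $r$-integral'' does not exist: H\"older on the $r$-integral goes in the wrong direction when you want to dominate an $L^q$ quantity by an $L^p$ one, and the kernel now carries the factor $\ho(tr)^{\frac1p-\frac1q}$ which cannot be discarded. What the paper actually uses at this point is a pointwise moment estimate, namely $M_{n+1}^{1-\frac1p}\ho\left(1-\frac1{M_{n+1}}\right)^{\frac1p}\int_0^1t^{M_n/4}|f(t)|\,dt\lesssim\|f\|_{A^p_\om}$, which rests on $u_p\in\R$, i.e.\ on $\om\in\Mp$ through Lemma~\ref{Lemma:u_p}; this allows the factor $\|f\|_{A^p_\om}^{q-p}$ to be pulled out, and the remaining $p$-power sum is controlled by $\|\hti(f)\|_{A^p_\om}^p$ via Corollary~\ref{hilbertapw} (hence again Theorem~\ref{th:gorro} and \eqref{le:minfty}). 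Your case (ii) has a similar soft spot: with $1/s=1/q-1/p$ the exponents $q/s<1$ prevent the interchange you implicitly perform when claiming the $r$-integration ``reproduces exactly'' the $H\left(q,s,\ho_{s(1-\frac1q)}\right)$ norm; the paper avoids this by doing H\"older over the discrete blocks and invoking Corollary~\ref{co:mixtogamma} to identify the resulting $\ell^{s}$-sum with that norm. You also acknowledge the key reduction as ``the main technical obstacle'' and defer it to the decomposition norms, which is circular, since your plan was precisely to bypass them.

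The necessity part is where the genuine gap lies. Testing on $f_a(z)=(1-\overline a z)^{-\gamma}$ and ``evaluating $\hg f_a$ near $z=a$'' cannot produce the lower bound $M_q(r,g')\gtrsim$-free estimate you need: the Maclaurin coefficients of $\hg(f_a)$ are $(k+1)b_{k+1}\int_0^1t^kf_a(t)\,dt$, so any pointwise or point-evaluation argument only sees radial averages of $g'$ against the moments of $f_a$, and cancellation along the radius means such averages can be small while $M_q(r,g')$ is large; moreover a single point evaluation cannot control an integral mean over the circle from below. The paper's whole Hadamard-product apparatus exists to overcome exactly this: the smooth multipliers $W_{M_n}^{\Phi_{M_n}}$ of Theorem~\ref{th:cesaro}, with $\Phi_{M_n}$ built from $\varphi_{M_n,\om}=1/\psi_{M_n,\om}$ of Lemma~\ref{le:hg1}, divide out the moments blockwise, so that $\|W_{M_n}^{\Phi_{M_n}}\ast\hg(f_{M_n})\|_{A^q_\om}$ recovers $\|\Delta_n^\om g'\|_{A^q_\om}$, and Corollary~\ref{Lemma:q>p} then converts the blockwise bounds into $g\in\Lambda\left(q,\frac1p,\ho^{\frac1p-\frac1q}\right)$. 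Likewise in (ii), your random-sign/Khinchine superposition neither explains how to lower-bound $\|\hg(f_{a_n})\|$ by block norms of $g'$ (the same cancellation problem) nor how the $s$-summability emerges; the paper instead uses a single $g$-dependent test family $Q_\r(z)=\int_0^1\phi_\r(t)(1-tz)^{-1}dt$ with $\phi_\r=\left(M_q(\cdot,g_\r')(1-\cdot)^{1-\frac1q}\right)^{\frac{q}{p-q}}$, computes $\|Q_\r\|_{A^p_\om}$ via Muckenhoupt's theorem (Lemma~\ref{testfunctionsq<p}, again using $\om\in\Mp$), and extracts the $H\left(q,s,\ho_{s(1-\frac1q)}\right)$ norm through the decomposition theorem. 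As it stands, your converse direction would not go through, and the forward direction for $p<q$ and for $q<p$ needs the specific ingredients above rather than the H\"older steps you describe.
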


It is easy to see, by using the auxiliary result on $\Mp$, stated
as Lemma~\ref{Lemma:u_p} below, that the space
$\Lambda\left(q,\frac{1}{p},
\ho^{\frac{1}{p}-\frac{1}{q}}\right)$ is not trivial if
$\om\in\R\cap\Mp$ no matter how large $q$ is.

Our approach to the study of the boundedness of $\hg$ on weighted
Bergman spaces arises the Muckenhoupt type condition
\eqref{Mpcondition} in a natural way. In order to explain this
phenomenon better, we recall that the {\em{sublinear Hilbert
operator}} is defined by
    $$
    \hti(f)(z)=\int_0^1\frac{|f(t)|}{1-tz}\,dt.$$
We shall see that it behaves like a kind of maximal function for all generalized
Hilbert operators under the assumptions of Theorem~\ref{mainth}. Indeed,
 we will show that
\begin{equation}\label{justificationmaximal}
    \|\hg(f)\|_{A^q_\om}\lesssim \|f\|_{A^p_\om}+\|f\|^{s_1}_{A^p_\om}\|\hti(f)\|^{s_2}_{A^p_\om},\quad
    s_1+s_2=1.
    \end{equation}
This together with the sharp inequality
    \begin{equation}\label{le:minfty}
    \int_0^1
    M^p_\infty(r,f)\,\ho(r)\,dr\le\frac{\pi}{2}\|f\|^p_{A^p_\om},
    \end{equation}
which can be easily obtained by integrating the known inequality $
\int_0^s M^p_\infty(r,f)\,dr\le\pi s M_p^p(s,f)$~\cite{Po61/62},
lead us to consider the following result of interest of its
own.

\begin{theorem}\label{th:gorro}
Let $1<p<\infty$ and $\om\in\R$ such that \eqref{99} is satisfied.
Then the following assertions are equivalent:
    \begin{enumerate}
    \item[\rm(i)] $\h:L^p_{\ho}\to A^p_\om$ is bounded;
    \item[\rm(ii)] $\hti:L^p_{\ho}\to A^p_\om$ is bounded;
    \item[\rm(iii)] $\om$ satisfies the Muckenhoupt type condition
    \begin{equation}\label{eq:h1}
    \begin{split}
    \Mp(\om)&=\sup_{0\le
    r<1}\bigg(\int_{r}^1\widehat{\om}(t)^{-\frac{1}{p-1}}\,dt\bigg)^{1-\frac{1}{p}}
    \left(\int_{0}^r(1-t)^{-p}\,\widehat{\om}(t)\,dt\right)^\frac{1}{p}<\infty.
    \end{split}
    \end{equation}
\end{enumerate}
Moreover, if $\om\in\Mp$, then
    $$
    \|\h\|_{\left(L^p_{\ho},A^p_\om\right)}\asymp
    \|\hti\|_{\left(L^p_{\ho},A^p_\om\right)}\asymp\Mp(\om).
    $$
\end{theorem}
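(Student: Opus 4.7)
The plan is to prove $\mathrm{(ii)}\Rightarrow\mathrm{(i)}\Rightarrow\mathrm{(iii)}\Rightarrow\mathrm{(ii)}$, tracking constants throughout so that the stated norm equivalence comes out as a by-product of the circle of implications. The implication $\mathrm{(ii)}\Rightarrow\mathrm{(i)}$, together with $\|\H\|_{(L^p_{\ho},A^p_\om)}\le\|\hti\|_{(L^p_{\ho},A^p_\om)}$, is immediate from the pointwise estimate $|\H(f)(z)|\le\hti(f)(z)$.

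For $\mathrm{(iii)}\Rightarrow\mathrm{(ii)}$ I would first reduce the two-dimensional problem on $A^p_\om$ to a one-dimensional one on $[0,1)$. Assuming $f\ge 0$, the function $\hti(f)(z)=\int_0^1 f(t)/(1-tz)\,dt$ is analytic in $\D$ and $|1-tre^{i\theta}|\ge 1-tr$ with equality at $\theta=0$, so $M_\infty(r,\hti(f))=\hti(f)(r)$ and consequently
$$\|\hti(f)\|_{A^p_\om}^p\le 2\int_0^1 \hti(f)(r)^p\,\om(r)\,dr.$$
Splitting the inner integral at $t=r$ and using the elementary identity $1-tr=(1-r)+r(1-t)$, the second piece is exactly the Hardy-type operator $\int_r^1 f(t)/(1-tr)\,dt$ appearing in the paper's discussion of~\eqref{Mpcondition}, which by the cited Muckenhoupt theorem is bounded on $L^p_{\ho}$ precisely when $\om\in\Mp$; the first piece $\int_0^r f(t)/(1-tr)\,dt$ is dominated by $(1-r)^{-1}\int_0^r f(t)\,dt$ and is handled by a second weighted Hardy inequality on $[0,1)$. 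The regularity relation $\ho(r)\asymp(1-r)\om(r)$, available because $\om\in\R$, is then invoked to pass from the target weight $\om$ to $\ho$, showing that both sharp Muckenhoupt constants are controlled by $\Mp(\om)$ and yielding $\|\hti\|\lesssim\Mp(\om)$.

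For $\mathrm{(i)}\Rightarrow\mathrm{(iii)}$ I would test the boundedness on the one-parameter family $f_\xi(t)=\ho(t)^{-\frac{1}{p-1}}\chi_{[\xi,1)}(t)$, which belongs to $L^p_{\ho}$ with $\|f_\xi\|^p_{L^p_{\ho}}=\int_\xi^1\ho(t)^{-\frac{1}{p-1}}\,dt$ finite by~\eqref{99}. Since $f_\xi\ge 0$ and $\mathrm{Re}\,(1-tz)^{-1}>0$ in $\D$, the analytic function $\H(f_\xi)$ has positive real part; estimating $|\H(f_\xi)(z)|$ from below on a suitable sub-region of $\D$ and exchanging $\om\,dA$ for $\ho(r)(1-r)^{-1}\,dr\,d\theta$ via $\om\in\R$ produces the factor $\int_0^\xi(1-t)^{-p}\ho(t)\,dt$ on the left of the boundedness inequality; rearranging and taking the supremum over $\xi\in(0,1)$ yields $\Mp(\om)\lesssim\|\H\|$. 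The main obstacle throughout is this precise matching of quantities: identifying the sharp constants from the Hardy sub-problems in $\mathrm{(iii)}\Rightarrow\mathrm{(ii)}$, and from the test-function estimates in $\mathrm{(i)}\Rightarrow\mathrm{(iii)}$, with the exact form of~\eqref{eq:h1}. This is exactly the step where the regularity $\om\in\R$ is indispensable, since for non-regular weights the one-variable sup conditions produced by the two Hardy inequalities need not collapse to the single condition defining $\Mp$.
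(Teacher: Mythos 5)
The step that breaks is the very first reduction in your proof of (iii)$\Rightarrow$(ii). Replacing $M_p(r,\hti(f))$ by $M_\infty(r,\hti(f))=\hti(f)(r)$ forces you to prove the one--dimensional inequality
\[
\int_0^1\left(\int_0^1\frac{f(t)}{1-tr}\,dt\right)^p\om(r)\,dr\lesssim\|f\|^p_{L^p_{\ho}},
\]
and this inequality is \emph{false} under the hypotheses of the theorem. Take $\om(r)=(1-r)^\a$ with $-1<\a<p-2$, so that $\om\in\R\cap\Mp$ and \eqref{99} holds, and $f(t)=(1-t)^{-\frac{\a+1}{p}}$. Then $\|f\|^p_{L^p_{\ho}}\asymp1$, while for $t\ge r$ one has $1-tr\le2(1-r)$, hence $\int_r^1\frac{f(t)}{1-tr}\,dt\gtrsim(1-r)^{-\frac{\a+1}{p}}$ and the left-hand side is at least a constant times $\int_0^1(1-r)^{-(\a+1)+\a}\,dr=\int_0^1(1-r)^{-1}\,dr=\infty$. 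Equivalently, the Muckenhoupt constant of the dual Hardy inequality with weight $\om(r)(1-r)^{-p}$ on the left and $\ho$ on the right is $\asymp(1-r)^{-1/p}$, which is unbounded; your appeal to $\ho(r)\asymp(1-r)\om(r)$ ``to pass from the target weight $\om$ to $\ho$'' goes in the wrong direction, because $\om$ is \emph{larger} than $\ho$ near the boundary, so boundedness of the Hardy-type operator on $L^p_{\ho}$ (which is indeed equivalent to $\om\in\Mp$) does not give boundedness into $L^p_\om$. The loss is exactly the factor $(1-tr)^{1/p}$ separating $M_\infty\bigl(r,(1-tz)^{-1}\bigr)=(1-tr)^{-1}$ from $M_p\bigl(r,(1-tz)^{-1}\bigr)\asymp(1-tr)^{-1+\frac1p}$; the condition \eqref{eq:h1} is calibrated to the latter. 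The paper never takes $M_\infty$ on the target side: it first replaces $\|\cdot\|_{A^p_\om}$ by a Littlewood--Paley norm (this is where $\om\in\R$ enters, via \cite[Theorem~1.1]{PavP}) and then uses Minkowski's inequality in continuous form to get $M_p\bigl(r,(\H\phi)'\bigr)\lesssim\int_0^1\phi(t)(1-tr)^{-2+\frac1p}\,dt$, after which the two resulting Muckenhoupt--Hardy inequalities match \eqref{eq:h1} exactly. One could also skip the differentiation and apply Minkowski directly to $M_p(r,\hti(f))$; what cannot be skipped is integrating the kernel in $\theta$ \emph{before} raising to the power $p$.

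Two further remarks. The pointwise bound $|\H(f)(z)|\le\hti(f)(z)$ invoked for (ii)$\Rightarrow$(i) is not literally true: $\hti(f)(z)$ is not real for general $z\in\D$, and $\operatorname{Re}\,(1-tz)^{-1}$ is not uniformly comparable to $|1-tz|^{-1}$; the two operator norms should instead be compared through the common Minkowski bound on the integral means, which is why the paper runs the arguments for $\H$ and $\hti$ simultaneously rather than deducing one from the other pointwise. Your test functions $f_\xi=\ho^{-\frac1{p-1}}\chi_{[\xi,1)}$ for (i)$\Rightarrow$(iii) are exactly the paper's $\phi_r$, and that part of the sketch is sound: the paper implements your ``lower bound on a subregion'' by restricting to the radius $[0,1)$ via \eqref{le:minfty} and using $1-ts\le2(1-s)$ for $s\le r\le t$.
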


Theorem \ref{th:gorro} together with \eqref{le:minfty} extends
\cite[Theorem~1]{Di} and \cite[Theorem~5~(ii)]{GaGiPeSis}.

\begin{corollary}\label{hilbertapw}
Let $1<p<\infty$ and $\om\in\R\cap\Mp$. Then, both the Hilbert
operator~$\h$ and the sublinear Hilbert operator $\hti$ are
bounded on $A^p_\om$.
\end{corollary}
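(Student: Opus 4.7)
The plan is to reduce Corollary~\ref{hilbertapw} to Theorem~\ref{th:gorro} via the restriction map $R:f\mapsto f|_{[0,1)}$. Since the integrals defining $\h(f)(z)=\int_0^1 f(t)(1-tz)^{-1}\,dt$ and $\hti(f)(z)=\int_0^1|f(t)|(1-tz)^{-1}\,dt$ depend only on the values of $f$ on $[0,1)$, the two Bergman-space operators in the statement coincide with the operators of Theorem~\ref{th:gorro} applied to $Rf$. Therefore it suffices to prove that $R$ sends $A^p_\om$ boundedly into $L^p_\ho([0,1))$ and then to invoke that theorem.

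For the boundedness of $R$ I would use only the trivial pointwise estimate $|f(t)|\le M_\infty(t,f)$ for $t\in[0,1)$ together with the sharp inequality~\eqref{le:minfty}, giving
$$
\|Rf\|_{L^p_\ho}^p=\int_0^1|f(t)|^p\ho(t)\,dt\le \int_0^1 M_\infty^p(t,f)\,\ho(t)\,dt\le \tfrac{\pi}{2}\|f\|_{A^p_\om}^p.
$$
Before appealing to Theorem~\ref{th:gorro} I would also record that~\eqref{99} is automatic under the standing hypothesis $\om\in\R\cap\Mp$: evaluating the supremum in~\eqref{eq:h1} at $r=\tfrac{1}{2}$ forces $\int_{1/2}^1\ho(t)^{-1/(p-1)}\,dt<\infty$, while on $[0,\tfrac{1}{2}]$ the integrability of $\ho^{-1/(p-1)}$ is immediate from the continuity and strict positivity of $\om\in\R$, which yield $\ho(t)\ge \ho(\tfrac{1}{2})>0$ there. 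With~\eqref{99} secured, Theorem~\ref{th:gorro} applied to $Rf$ produces
$$
\|\h(f)\|_{A^p_\om}+\|\hti(f)\|_{A^p_\om}\lesssim \Mp(\om)\,\|Rf\|_{L^p_\ho}\lesssim \Mp(\om)\,\|f\|_{A^p_\om},
$$
which is the assertion of the corollary.

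There is no essential obstacle in this argument, as the substantive content has been packaged into Theorem~\ref{th:gorro} and the sharp inequality~\eqref{le:minfty}. The only point that deserves a moment of attention is the cheap verification that the Muckenhoupt condition~\eqref{Mpcondition} absorbs the a~priori integrability condition~\eqref{99} needed to make Theorem~\ref{th:gorro} applicable.
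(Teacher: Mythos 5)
Your argument is exactly the one the paper intends: the corollary is stated as an immediate consequence of Theorem~\ref{th:gorro} combined with \eqref{le:minfty}, i.e.\ the restriction $f\mapsto f|_{[0,1)}$ maps $A^p_\om$ boundedly into $L^p_{\ho}$ and Theorem~\ref{th:gorro} then gives the boundedness of $\h$ and $\hti$ into $A^p_\om$. Your explicit verification that $\om\in\Mp$ (together with positivity of $\ho$ on $[0,1)$) forces \eqref{99} is correct and fills in a detail the paper leaves implicit (it also follows from Lemma~\ref{Lemma:u_p}(ii), since $\ho^{-\frac{1}{p-1}}\in\R$ is in particular integrable).
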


We also work partially with radial weights $\om$ for which the
quotient $\frac{\p_\om(r)}{1-r}$ is not bounded. More precisely,
we say that a radial weight $\om$ is \emph{rapidly increasing}, if
it is continuous and
    \begin{equation*}
    \lim_{r\to 1^-}\frac{\p_\om(r)}{1-r}=\infty.
    \end{equation*}
The class of rapidly increasing weights is denoted by $\I$. It is
easy to see that $A^p_\om\subset A^p_\beta$ for each $\om\in\I$
and for any $\beta>-1$, see \cite[Section~1.4]{PelRat}. Typical
examples of rapidly increasing weights are
    \begin{equation*}
    \om(r)=\left((1-r)\prod_{n=1}^{N}\log_n\frac{\exp_{n}0}{1-r}\left(\log_{N+1}\frac{\exp_{N+1}0}{1-r}\right)^\a\right)^{-1}
    \end{equation*}
for all $1<\a<\infty$ and $N\in\N=\{1,2,\ldots\}$. Here, as usual,
$\log_nx=\log(\log_{n-1}x)$, $\log_1x=\log x$, $\exp_n
x=\exp(\exp_{n-1}x)$ and $\exp_1x=e^x$.

The right choice of the norm used is in many cases a key tool
for a good understanding of how a concrete operator acts in a
given space. Here, an $l^p$-type norm of the Hardy norms of blocks
of the Maclaurin series, whose size depend on the weight $\om$,
provides us an effective skill to study the boundedness and
compactness of $\hg$ on weighted Bergman space $A^p_\om$. The size
of these blocks reflects the growth of the inducing weight $\om$.
We remind the reader that {\em{decomposition results}} have been an
important tool for the study of a good number of questions on
spaces of analytic function on $\D$. They have been applied, for
example, when studying coefficient multipliers~\cite{ArJeVu},
Carleson measures~\cite{GPP} and the generalized Hilbert
operator~\cite{GaGiPeSis}. The results proved by M.~Mateljevi\'c
and M.~Pavlovi\'c in~\cite{MatelPavstu} (see also~\cite{Pabook})
offer such a decomposition result on~$A^p_\om$ when $\om\in\R$, see also
\cite{Pav1,Pav2} for further results. This
because a calculation based on \cite[Lemma $1.1$]{PelRat} says
that \cite[Theorem~2.1 (b)]{MatelPavstu} works for $\om\in\R$,.
However, to the best of our knowledge, results in the existing
literature do not cover the less understood case of the class~$\I$
of rapidly increasing weights. Indeed, only some special cases
have been considered in~\cite[Theorem~6.1]{GPP}.

\par We will develop a
technique that allows us to give a unified treatment for both
classes~$\R$ and $\I$. Theorem~\ref{th:dec1} below is our main
result in that direction. To give the precise statement, we need
to introduce some notation. To do this, let $\om\in\I\cup\R$ such
that $\int_0^1 \om(r)\,dr=1$. For each $\a>0$ and
$n\in\N\cup\{0\}$, let $r_n=r_n(\om,\a)\in[0,1)$ be defined by
    \begin{equation}\label{rn}
    \widehat{\om}(r_n)=\int_{r_n}^1\om(r)\,dr=\frac{1}{2^{n\a}}.
    \end{equation}
Clearly, $\{r_n\}_{n=0}^\infty$ is an increasing sequence of
distinct points on $[0,1)$ such that $r_0=0$ and $r_n\to1^-$, as
$n\to\infty$. For $x\in[0,\infty)$, let $E(x)$ denote the integer
such that $E(x)\le x<E(x)+1$, and set
$M_n=E\left(\frac{1}{1-r_{n}}\right)$ for short. Write
    $$
    I(0)=I_{\om,\alpha}(0)=\left\{k\in\N\cup\{0\}:k<M_1\right\}
    $$
and
   \begin{equation*}
    I(n)=I_{\om,\a}(n)=\left\{k\in\N:M_n\le
    k<M_{n+1}\right\}
  \end{equation*}
for all $n\in\N$. If
$f(z)=\sum_{n=0}^\infty a_nz^n$ is analytic in~$\D,$ define the
polynomials $\Delta^{\om,\a}_nf$ by
    \[
    \Delta_n^{\om,\a}f(z)=\sum_{k\in I_{\om,\a}(n)} a_kz^k,\quad n\in\N\cup\{0\}.
    \]
If $\a=1$, we will simply write $\Delta_n^{\om}$ instead of
$\Delta_n^{\om,1}$.

\begin{theorem}\label{th:dec1}
Let $1<p<\infty$, $0<\a<\infty$ and $\om\in\I\cup\R$ such that
$\int_0^1\om(r)\,dr=1$, and let $f\in\H(\D)$.
\begin{itemize}
\item[\rm(i)] If $0<q<\infty$, then $f\in H(p,q,\om)$ if and only
if
    \begin{equation*}
    \sum_{n=0}^\infty 2^{-n\alpha}\|\Delta^{\om,\a}_n f\|_{H^p}^q<\infty.
    \end{equation*}
Moreover,
    \[
    \|f\|_{H(p,q,\om)}\asymp \bigg(\sum_{n=0}^\infty 2^{-n\a}
    \|\Delta^{\om,\a}_n f\|_{H^p}^q\bigg)^{1/q}.
    \]
\item[\rm(ii)] If $0<\beta<\infty$, then $f\in
H(p,\infty,\ho^\b)$ if and only if
    $$
    \sup_n 2^{-n\a\beta} \| \Delta^{\om,\a}_n f\|_{H^p}<\infty.
    $$
Moreover,
    \[
    \|f\|_{H(p,\infty,\ho^\b)}\asymp \sup_n2^{-n\a\beta}\|
    \Delta^{\om,\a}_n f\|_{H^p}.
    \]
\end{itemize}
\end{theorem}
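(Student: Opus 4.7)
The plan is to cut the integral defining $\|f\|_{H(p,q,\om)}^q$ along the radial partition $\{r_n\}$ and reduce each piece to the size of the blocks $\Delta_n^{\om,\a}f$ through an almost-diagonal argument. Part (ii) is proved by the same scheme, replacing the $\ell^q$-sum by a supremum. I describe (i) in what follows.

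From \eqref{rn} I first record that $\int_{r_n}^{r_{n+1}}\om(r)\,dr\asymp 2^{-n\a}$, that $\widehat{\om}(r)\asymp 2^{-n\a}$ on $[r_n,r_{n+1}]$, and that $(1-r_n)M_n\asymp 1$. Since $r\mapsto M_p(r,f)$ is non-decreasing, a shift of index then gives
\[
\|f\|_{H(p,q,\om)}^q\asymp \sum_n 2^{-n\a}M_p^q(r_{n+1},f),
\]
so the task reduces to showing that this last sum is equivalent to $\sum_n 2^{-n\a}\|\Delta_n^{\om,\a}f\|_{H^p}^q$.

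For the upper bound, writing $f=\sum_k\Delta_k^{\om,\a}f$ and using the factorisation $\Delta_k^{\om,\a}f(z)=z^{M_k}Q_k(z)$ for the tail, one has
\[
M_p(r_{n+1},f)\le \sum_{k\le n+1}\|\Delta_k^{\om,\a}f\|_{H^p}+\sum_{k>n+1}r_{n+1}^{M_k}\|\Delta_k^{\om,\a}f\|_{H^p}.
\]
The coefficients $r_{n+1}^{M_k}$ decay super-geometrically in $k-n$ since $(1-r_{n+1})M_k\asymp M_k/M_{n+1}$, and $M_k/M_{n+1}$ grows at least geometrically in $k-n$ in the $\R$-case (and much faster in the $\I$-case). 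A discrete Hardy-type inequality (for $q\ge 1$, exploiting the geometric weight $2^{-n\a}$) or the concavity $(\sum a_k)^q\le\sum a_k^q$ (for $0<q<1$) then turns $\sum_n 2^{-n\a}(\cdot)^q$ into $C\sum_k 2^{-k\a}\|\Delta_k^{\om,\a}f\|_{H^p}^q$.

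For the lower bound I fix an integer $K=K(p,\om,\a)$ large enough that $(1-r_{n+K})M_{n+1}\le 1/2$ for all $n$, which is possible because $(1-r_{n+K})/(1-r_{n+1})$ goes to $0$ geometrically in $K$ for $\om\in\R$ and super-geometrically for $\om\in\I$ (in fact $K=2$ already suffices in the latter case). The Bernstein inequality $\|P'\|_{H^p}\le N\|P\|_{H^p}$ for polynomials $P$ of degree $\le N$, combined with $P(z)-P(rz)=\int_r^1 zP'(tz)\,dt$, then yields $M_p(r_{n+K},\Delta_n^{\om,\a}f)\ge (1/2)\|\Delta_n^{\om,\a}f\|_{H^p}$. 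The M.~Riesz projection bound $M_p(r,\Delta_n^{\om,\a}f)\le C_p M_p(r,f)$ (valid since $1<p<\infty$) gives $\|\Delta_n^{\om,\a}f\|_{H^p}\lesssim M_p(r_{n+K},f)$, and summing against $2^{-n\a}$ and reindexing closes the lower bound. The main obstacle is ensuring that in the $\I$-regime the almost-diagonal estimates still hold uniformly: the spectral gap $M_{n+1}-M_n$ can be enormous, but the off-diagonal coefficients $r_{n+1}^{M_k}$ in the upper bound and the Bernstein-based index-shift $K$ in the lower bound both behave better there than for $\om\in\R$, so the argument goes through.
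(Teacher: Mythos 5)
Your argument is correct, and both halves rest on the same two pillars as the paper's proof: the M.~Riesz projection theorem for the lower bound, and the super-geometric decay of $r_{n}^{M_k}$ in $k-n$ (coming from Lemma~\ref{le:condinte}) played against the geometric weight $2^{-n\a}$ for the upper bound. The packaging, however, is genuinely different. For the lower bound you invoke Bernstein's inequality to show that a block of degree at most $M_{n+1}$ retains half of its $H^p$-norm at radius $r_{n+K}$, where the paper gets the same conclusion more cheaply from the elementary two-sided inequality $r^{n_2}\|S_{n_1,n_2}g\|_{H^p}\le M_p(r,S_{n_1,n_2}g)\le r^{n_1}\|S_{n_1,n_2}g\|_{H^p}$ of Mateljevi\'c--Pavlovi\'c, which avoids any index shift since $r_{n+1}^{M_{n+1}}\gtrsim1$ by the very definition of $M_{n+1}$. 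For the upper bound the paper does not discretize the norm: it keeps the integral $\int_0^1(\sum_n r^{M_n}\|\Delta_n^{\om,\a}f\|_{H^p})^q\om(r)\,dr$ and appeals to a separate statement on power series with positive coefficients (Proposition~\ref{pr:DecompApw}), whose proof is a continuous analogue of your almost-diagonal estimate: a H\"older splitting against the kernel $\eta_\g(r)=\sum_n2^{n\g}r^{M_n}\lesssim\widehat{\om}(r)^{-\g/\a}$ followed by moment estimates for $\int_0^1r^{M_n}\om(r)\,dr$. Your route, which discretizes $\|f\|_{H(p,q,\om)}^q$ along $\{r_n\}$ first and then runs a discrete Hardy-type inequality, is more self-contained and arguably more transparent about where the geometry of $\{M_n\}$ enters; the paper's factorization through Proposition~\ref{pr:DecompApw} buys reusability, since that proposition is also the engine behind the $\om$-lacunary series results (Theorems~\ref{lacunary1} and~\ref{lacunary}) and Corollary~\ref{co:mixtogamma}. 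One small point worth making explicit in your write-up: in the $\I$-case the two-sided comparability $M_{n+1}\asymp M_n$ of Lemma~\ref{le:Mncomparable} fails, but only the lower bound $M_{n+j}/M_n\gtrsim2^{j\a/\b}$ from Lemma~\ref{le:condinte}(ii) is needed, and that is exactly the direction that survives (and improves) for rapidly increasing weights, as you observe.
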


The method of proof that we use to establish Theorem~\ref{th:dec1}
can be employed to characterize certain functions in $A^p_\om$ in
terms of the coefficients in their Maclaurin series. In fact, we
will see that, whenever $\om\in\R$, a standard lacunary series
$f(z)=\sum_{k=0}^\infty a_k z^{n_k}$, $\frac{n_{k+1}}{n_k}\ge
c>1$, belongs to $A^p_\om$ if and only if
    $$
    \sum_{k=0}^\infty
    |a_k|^q\int_0^1r^{2n_k+1}\om(r)\,dr<\infty.
    $$
The same is not true in general if $\om$ is rapidly increasing.
However, the assertion is valid for $\om\in\I$ if the Maclaurin
series expansion of $f$ has sufficiently large gaps depending
on~$\om$. To give the precise statement, let $\om$ be a radial
weight. We say that $f\in\H(\D)$ is an {\em{$\om$-lacunary
series}} in $\D$ if its Maclaurin series $\sum_{k=0}^\infty
a_kz^{n_k}$ satisfies
    \begin{equation*}
    \frac{\widehat{\om}\left(1-\frac{1}{n_k}\right)}{\widehat{\om}\left(1-\frac{1}{n_{k+1}}\right)}
    =\frac{\int_{1-\frac{1}{n_k}}^1\om(r)\,dr}{\int_{1-\frac{1}{n_{k+1}}}^1\om(r)\,dr}\ge\lambda>1
    ,\quad k\in\N\cup\{0\}.
    \end{equation*}
This is a natural generalization of the classical concept of power
series with Hadamard gaps, in the sense that, for $\om\in\R$, the
class of $\om$-lacunary series is nothing else but the set of
Hadamard gap series.

\begin{theorem}\label{lacunary1}
Let $0<q<\infty$, $0<p\le\infty$ and $\om\in\I\cup\R$ such that
$\int_{0}^1\om(r)\,dr=1$, and let $f$ be an $\om$-lacunary series
in $\D$. Then the following conditions are equivalent:
    \begin{itemize}
    \item[\rm(i)] $f\in H(p,q,\om)$; \item[\rm(ii)]
    $\displaystyle\sum_{k=0}^\infty
    |a_k|^q\int_0^1r^{2n_k+1}\om(r)\,dr<\infty$.
    \end{itemize}
Moreover,
    \begin{equation*}\begin{split}
  &  \|f\|^q_{H(p,q,\om)}\asymp \sum_{k=0}^\infty
    |a_k|^q\int_0^1r^{2n_k+1}\om(r)\,dr.
    \end{split}\end{equation*}
\end{theorem}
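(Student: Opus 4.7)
The plan is to deduce Theorem~\ref{lacunary1} from the block-decomposition of $H(p,q,\om)$ furnished by Theorem~\ref{th:dec1}, by exploiting the fact that an $\om$-lacunary series, for an appropriate choice of the parameter $\alpha$, has at most one non-zero coefficient in each dyadic block $\Delta^{\om,\alpha}_n f$.

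First I would fix $\alpha\in(0,\log_2\lambda)$, where $\lambda>1$ comes from the $\om$-lacunary condition. Since $\widehat{\om}(r_n)=2^{-n\alpha}$ and $M_n\asymp 1/(1-r_n)$, if $k\in I_{\om,\alpha}(n)=[M_n,M_{n+1})$, then $1-1/k$ lies in a uniform neighbourhood of $[r_n,r_{n+1})$, and hence $\widehat{\om}(1-1/k)\asymp 2^{-n\alpha}$ with constants depending only on $\alpha$. Combining this with the $\om$-lacunary hypothesis
$$
\widehat{\om}(1-1/n_{k+1})\le\lambda^{-1}\widehat{\om}(1-1/n_k)<2^{-\alpha}\widehat{\om}(1-1/n_k),
$$
one reads off that the block index of $n_{k+1}$ is strictly greater than that of $n_k$. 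Consequently every $\Delta^{\om,\alpha}_n f$ is either zero or a single monomial $a_{k(n)}z^{n_{k(n)}}$, whose $H^p$-norm equals $|a_{k(n)}|$.

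Applying Theorem~\ref{th:dec1}(i), valid for $1<p<\infty$, then gives
$$
\|f\|_{H(p,q,\om)}^q\asymp\sum_{n=0}^\infty 2^{-n\alpha}\|\Delta^{\om,\alpha}_n f\|_{H^p}^q=\sum_{k=0}^\infty 2^{-n(k)\alpha}|a_k|^q,
$$
where $n(k)$ is the unique block index with $n_k\in I_{\om,\alpha}(n(k))$. The standard radial-weight asymptotic $\int_0^1 r^{2n+1}\om(r)\,dr\asymp\widehat{\om}(1-1/n)$ (via \cite[Lemma~1.1]{PelRat}) together with $\widehat{\om}(1-1/n_k)\asymp 2^{-n(k)\alpha}$ converts the right-hand side into $\sum_k|a_k|^q\int_0^1 r^{2n_k+1}\om(r)\,dr$, which gives the claimed norm equivalence for $1<p<\infty$.

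The main obstacle will be covering the edge cases $0<p\le 1$ and $p=\infty$, which lie outside the scope of Theorem~\ref{th:dec1}. For these I would argue directly on $M_p(r,f)$: the upper bound follows from the triangle inequality $M_p(r,f)\le\sum_k|a_k|r^{n_k}$ together with Minkowski or H\"older exploiting the sparse exponent structure, while the lower bound comes from the coefficient inequality $|a_k|r^{n_k}\le M_p(r,f)$ (valid for $p\ge 1$) or, for $p<1$, by combining the one-term-per-block reduction with classical estimates for Hadamard-lacunary polynomials. The key structural ingredient throughout is the step above: the $\om$-lacunary hypothesis collapses each block to (at most) a monomial, which makes the edge-case estimates essentially elementary despite the absence of a clean decomposition norm for those values of $p$.
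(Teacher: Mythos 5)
Your route via Theorem~\ref{th:dec1} is genuinely different from the paper's (which never uses that decomposition here), and for $1<p<\infty$ it is essentially sound, but two points need repair and one real gap remains. First, the claim that choosing $\a<\log_2\lambda$ forces \emph{exactly one} $n_k$ per block is not quite right: because $M_n=E\left(\frac1{1-r_n}\right)$ and $\widehat{\om}$ is only doubling up to an $\om$-dependent constant (Lemma~\ref{le:condinte}), a frequency $n_k\in I_{\om,\a}(n)$ only gives $\widehat{\om}\left(1-\frac1{n_k}\right)\le C(\om)\,2^{-n\a}$, so the lacunarity hypothesis yields at most a \emph{bounded} number of $n_k$ per block (this is the paper's ``at most $\log_\lambda 2^\a+2$''), and no choice of $\a$ removes the constant $C(\om)$. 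This is harmless, but the collapse to a single monomial must be replaced by $\|\Delta_n^{\om,\a}f\|_{H^p}^q\asymp\sum_{n_k\in I_{\om,\a}(n)}|a_k|^q$ (boundedly many terms, coefficient functionals plus the triangle inequality). Second, for the edge cases your lower bound ``from the coefficient inequality $|a_k|r^{n_k}\le M_p(r,f)$'' does not by itself give the sum: you must integrate $|a_k|^qr^{qn_k}\om(r)$ over the disjoint annuli $[r_{n+1},r_{n+2}]$ block by block (or invoke Proposition~\ref{pr:DecompApw}) to assemble $\sum_k|a_k|^q\widehat{\om}\left(1-\frac1{n_k}\right)$; termwise bounds alone prove nothing about the series.

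The genuine gap is $0<p<1$, which the statement includes. Your fallback, ``classical estimates for Hadamard-lacunary polynomials,'' is Zygmund's theorem ($M_p(r,f)\asymp M_2(r,f)\asymp\left(\sum_k|a_k|^2r^{2n_k}\right)^{1/2}$ for Hadamard gap series), and to invoke it you must first prove that an $\om$-lacunary series actually has Hadamard gaps; this does not follow from the one-per-block reduction as you state it and requires the upper doubling estimate for $\widehat{\om}$ (the paper quotes the proof of \cite[Lemma~1.2]{PelRat}), which converts $\widehat{\om}\left(1-\frac1{n_k}\right)/\widehat{\om}\left(1-\frac1{n_{k+1}}\right)\ge\lambda$ into $n_{k+1}/n_k\ge\lambda^{1/\b}>1$ for large $k$. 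Once that is in hand, the paper's own argument (for the more general Theorem~\ref{lacunary}) finishes everything at one stroke: Zygmund's theorem plus Proposition~\ref{pr:DecompApw} applied to the positive series $\sum_k|a_k|^2r^{2n_k}$ with exponent $q/2$ gives (i)$\Leftrightarrow$(ii) for all $0<p<\infty$, the moment identification $\int_0^1r^{2n_k+1}\om(r)\,dr\asymp\widehat{\om}\left(1-\frac1{n_k}\right)$ comes from Lemma~\ref{le:condinte}(iv), and $p=\infty$ is handled by $H(\infty,q,\om)\subset H(2,q,\om)$ in one direction and by $M_\infty(r,f)\le\sum_k|a_k|r^{n_k}$ plus Proposition~\ref{pr:DecompApw} in the other. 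In short: your Theorem~\ref{th:dec1} route is a legitimate alternative for $1<p<\infty$, but as written the proposal does not cover the stated range $0<p\le\infty$, and completing the $p<1$ case forces you back onto precisely the ingredients (Hadamard-gap verification and Zygmund's theorem) on which the paper's simpler, uniform proof rests.
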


The remaining part of the paper is organized as follows. In
Section~\ref{decomposition} we state and prove some preliminary
results on weights and technical results on series with positive
coefficients, and prove Theorems~\ref{th:dec1} and
\ref{lacunary1}. Theorem~\ref{th:gorro} will be proved in
Section~\ref{sublinear}. In Section~\ref{hadamard} we will deal
with technical background on Hadamard products which will be used
in the proof of Theorem~\ref{mainth}, that is given in
Section~\ref{main}. Section~\ref{compact} is devoted to proving
the expected results on the compactness of $\hg:A^p_\om\to
A^q_\om$. Finally, in Section~\ref{further} we will offer natural
alternative descriptions of the spaces appearing in the statement
of Theorem~\ref{mainth} and analyze the Muckenhoupt type condition
\eqref{Mpcondition} in detail. In particular, we will see that
\eqref{Mpcondition} is closely related to the value of $\lim_{r\to
1^-}\p_\om(r)/(1-r)$, if it exists.

\section{Decomposition theorems}\label{decomposition}

This section is instrumental for the rest of the paper. Here we
will discuss basic properties of the radial weights considered and
$L^p_\om$-behavior of power series with positive coefficients, and
then prove Theorem~\ref{th:dec1} and other related decomposition
theorems. We will also prove Theorem~\ref{lacunary1} and further
discuss the $\om$-lacunary series.

\subsection{Preliminaries on weights}

We begin with collecting some necessary definitions and results on
weights in $\I\cup\R$. The \emph{Carleson square} $S(I)$
associated with an interval $I\subset\T$ is the set
$S(I)=\{re^{it}\in\D:\,e^{it}\in I,\, 1-|I|\le r<1\}$, where $|E|$
denotes the Lebesgue measure of the set $E\subset\T$. For
$1<p<\infty$, the letter $p'$ will denote its conjugate, that is,
the number for which $\frac1p+\frac{1}{p'}=1$.

Let $1<p_0<\infty$ and $\eta>-1$. A weight $u$ (not necessarily
radial) satisfies the \emph{Bekoll\'e-Bonami
$B_{p_0}(\eta)$-condition}, denoted by $u\in B_{p_0}(\eta)$, if
there exists a constant $C=C(p_0,\eta,\omega)>0$ such that
    \begin{equation*}
    \begin{split}
    \left(\int_{S(I)}u(z)(1-|z|)^{\eta}\,dA(z)\right)
    \left(\int_{S(I)}u(z)^{\frac{-p'_0}{p_0}}(1-|z|)^{\eta}\,dA(z)\right)^{\frac{p_0}{p'_0}}
    \le C|I|^{(2+\eta)p_0}
    \end{split}
    \end{equation*}
for every interval $I\subset \T$. For the proof of the next
result, see \cite[Lemmas~1.2-1.4]{PelRat}.

\begin{letterlemma}\label{le:condinte}
\begin{itemize}
\item[\rm(i)] Let $\om\in\R$. Then there exist constants
$\a=\a(\om)>0$ and $\b=\b(\omega)\ge\a$ such that
    \begin{equation*}
    \left(\frac{1-r}{1-t}\right)^\a\widehat{\om}(t)\le\widehat{\om}(r)\le
    \left(\frac{1-r}{1-t}\right)^\b\widehat{\om}(t),\quad 0\le r\le
    t<1.
    \end{equation*}
\item[\rm(ii)] Let $\om\in\I$. Then for each $\b>0$ there exists a
constant $C=C(\b,\omega)>0$ such that
    \begin{equation*}
    \widehat{\om}(r)\le C
    \left(\frac{1-r}{1-t}\right)^\b\widehat{\om}(t),\quad 0\le r\le
    t<1.
    \end{equation*}
\item[{\rm(iii)}] For each radial weight $\om$ and $0<\a<1$,
$\widetilde{\om}(r)=\widehat{\om}(r)^{-\a}\om(r)$ is also a weight
and $\psi_{\widetilde{\om}}(r)=\frac1{1-\a}\psi_{{\om}}(r)$ for
all $0<r<1$. \item[{\rm(iv)}] If $\om\in\I\cup\R$, then
    \begin{equation*}
    \int_0^1
    s^{x}\om(s)\,ds\asymp\widehat{\om}\left(1-\frac{1}{x}\right),\quad
    x\in[1,\infty).
    \end{equation*}
\item[{\rm(v)}]      If $\om\in\R$, then for each $p_0>1$ there exists
$\eta_0=\eta(p_0,\omega)>-1$ such that for all $\eta\ge \eta_0$,  $\frac{\om(z)}{(1-|z|)^\eta}$
belongs to $B_{p_0}(\eta)$.\index{Bekoll\'e-Bonami
weight}\index{$B_{p_0}(\eta)$}
    \end{itemize}
\end{letterlemma}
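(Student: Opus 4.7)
The plan is to obtain everything from the single logarithmic-derivative identity
$$-\frac{d}{dr}\log\widehat{\om}(r)=\frac{\om(r)}{\widehat{\om}(r)}=\frac{1}{\psi_\om(r)},$$
which converts pointwise control of $\psi_\om$ into multiplicative control of $\widehat{\om}$. For (i), regularity supplies constants $0<c_1\le c_2$ with $\frac{1}{c_2(1-r)}\le-\frac{d}{dr}\log\widehat{\om}(r)\le\frac{1}{c_1(1-r)}$; integrating from $r$ to $t$ and exponentiating yields the two-sided inequality with $\a=1/c_2$ and $\b=1/c_1$. For (ii), the hypothesis $\psi_\om(r)/(1-r)\to\infty$ for $\om\in\I$ provides, for any prescribed $\b>0$, an $r_\b<1$ past which $1/\psi_\om(r)\le\b/(1-r)$; integrating there produces the claimed bound, and enlarging the constant absorbs the compact piece $[0,r_\b)$.

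For (iii), direct differentiation gives $(\widehat{\om}^{1-\a})'=-(1-\a)\widehat{\om}^{-\a}\om=-(1-\a)\widetilde{\om}$, whence $\widehat{\widetilde{\om}}(r)=\widehat{\om}(r)^{1-\a}/(1-\a)$ using $\widehat{\om}(1)=0$, and hence $\psi_{\widetilde\om}=\psi_\om/(1-\a)$; integrability of $\widetilde{\om}$ follows from $\widehat{\widetilde{\om}}(0)<\infty$. For (iv), the lower bound is immediate from $\int_0^1 s^x\om(s)\,ds\ge(1-1/x)^x\widehat{\om}(1-1/x)\gtrsim\widehat{\om}(1-1/x)$, using $(1-1/x)^x\ge e^{-1}$ for $x\ge 2$ and compactness for $x\in[1,2]$. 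For the upper bound I split at $1-1/x$: the outer piece is at most $\widehat{\om}(1-1/x)$, while the inner piece is dominated by the dyadic sum with breakpoints $s_j=\max\{0,1-2^j/x\}$, on each of which $s^x\le e^{-2^j}$; parts (i)/(ii) bound $\widehat{\om}(s_{j+1})\lesssim 2^{(j+1)\b}\widehat{\om}(1-1/x)$, yielding a super-exponentially convergent series summing to $O(\widehat{\om}(1-1/x))$.

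The main obstacle is (v). Radiality reduces the two integrals over $S(I)$ appearing in the Bekoll\'e--Bonami condition to one-dimensional integrals on $[1-|I|,1)$ carrying an extra factor of $|I|$ from the angular direction. Setting $r=1-|I|$, the first integral equals $|I|\,\widehat{\om}(r)$ up to constants, while for the second the regular-weight identity $\om(s)\asymp\widehat{\om}(s)/(1-s)$ recasts the integrand as $\widehat{\om}(s)^{-p_0'/p_0}(1-s)^{\eta p_0'+p_0'/p_0}$. Part (i) then gives $\widehat{\om}(s)^{-p_0'/p_0}\le((1-s)/|I|)^{-\b p_0'/p_0}\widehat{\om}(r)^{-p_0'/p_0}$ on $[r,1)$, reducing the integrand to a pure power of $(1-s)$ that is integrable precisely when $\eta>(\b-p_0)/p_0$; this pins down the threshold $\eta_0=\eta_0(p_0,\om)$. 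Evaluating the resulting integral yields for the second factor the upper bound $\widehat{\om}(r)^{-p_0'/p_0}|I|^{\eta p_0'+p_0'/p_0+2}$, and multiplying by $|I|\,\widehat{\om}(r)$ raised to the $p_0/p_0'$-th power cancels $\widehat{\om}(r)$ exactly; the arithmetic identity $1+p_0/p_0'=p_0$ then delivers the target exponent $(2+\eta)p_0$ of $|I|$. The delicate bookkeeping is verifying that $\eta_0$ may be chosen simultaneously above $(\b(\om)-p_0)/p_0$ and above $-1$, which is automatic since $\b(\om)>0$.
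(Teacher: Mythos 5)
Your proof is correct, and it is worth noting that the paper itself does not prove this lemma at all: it simply cites \cite[Lemmas~1.2--1.4]{PelRat}, and your argument is essentially a self-contained reconstruction of the standard proofs given there — integrating the identity $-(\log\widehat{\om})'(r)=1/\psi_\om(r)$ for (i) and (ii), the exact antiderivative $\widehat{\om}^{1-\a}/(1-\a)$ for (iii), a dyadic splitting of $[0,1-1/x]$ for (iv), and a direct verification of the Bekoll\'e--Bonami inequality for (v), where your exponent bookkeeping is right: the integrability threshold is indeed $\eta>(\b-p_0)/p_0>-1$, and the identities $1+p_0'/p_0=p_0'$ and $1+p_0/p_0'=p_0$ produce exactly the bound $|I|^{(2+\eta)p_0}$ after the first factor cancels $\widehat{\om}(1-|I|)$ against the second factor raised to the power $p_0/p_0'$. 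Two cosmetic points. First, the inequality $(1-1/x)^x\ge e^{-1}$ for $x\ge2$ is false as stated, since $(1-1/x)^x$ increases \emph{to} $e^{-1}$ and equals $1/4$ at $x=2$; but all the lower bound in (iv) needs is $(1-1/x)^x\ge1/4$ there, together with your compactness remark on $[1,2]$, so nothing is lost. Second, in (v) you should also record the trivial case of long arcs, where $1-|I|\le0$ and the Carleson box reaches the origin; this follows from the same computation with $r=0$, because $\om(z)^{-p_0'/p_0}(1-|z|)^{\eta p_0'}$ is integrable over all of $\D$ once $\eta>(\b-p_0)/p_0$, and then both factors are comparable to constants while $|I|\asymp1$.
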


The next lemma is a restatement of \cite[Lemma~2.3]{PelRat}.

\begin{letterlemma}\label{Lemma:Zhu-type}
\begin{itemize}
\item[\rm(i)] If $\om\in\R$, then there exists $\g_0=\g_0(\om)$
such that
    \begin{equation*}
    \int_\D\frac{\om(z)}{|1-\overline{a}z|^{\g+1}}\,dA(z)\asymp\frac{\om\left(S(a)\right)}{(1-|a|)^{\g+1}}\asymp\frac{\om(a)}{(1-|a|)^{\g-1}},\quad
    a\in\D,
    \end{equation*}
for all $\g>\g_0$. \item[\rm(ii)] If $\om\in\I$, then
    \begin{equation*}
    \int_\D\frac{\om(z)}{|1-\overline{a}z|^{\g+1}}\,dA(z)\asymp\frac{\om\left(S(a)\right)}{(1-|a|)^{\g+1}},\quad
    a\in\D,
    \end{equation*}
for all $\g>0$.
\end{itemize}
\end{letterlemma}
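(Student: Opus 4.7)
The plan is to reduce the integral to a one-dimensional problem via rotation invariance and the Forelli--Rudin angular estimate
\begin{equation*}
\int_0^{2\pi} \frac{d\theta}{|1-\overline{a}re^{i\theta}|^{\g+1}} \asymp \frac{1}{(1-|a|r)^{\g}}, \quad \g>0,
\end{equation*}
valid for all $0\le r<1$ and $a\in\D$. Polar coordinates then give
\begin{equation*}
\int_\D\frac{\om(z)}{|1-\overline{a}z|^{\g+1}}\,dA(z)\asymp\int_0^1\frac{\om(r)}{(1-|a|r)^{\g}}\,dr,
\end{equation*}
and one integration by parts, using $\widehat\om\,'(r)=-\om(r)$ and $\widehat\om(1)=0$, recasts the right-hand side as $\widehat\om(0)+\g|a|\,J(a)$ with
\begin{equation*}
J(a)=\int_0^1 \frac{\widehat\om(r)}{(1-|a|r)^{\g+1}}\,dr.
\end{equation*}
The task reduces to estimating $J(a)$ for $|a|$ close to~$1$.

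The main step is a dyadic decomposition of $[0,1)$ with respect to the point $|a|$. I would split $[0,|a|]$ into intervals $L_k$ on which $1-r\asymp 2^{k}(1-|a|)$ and $[|a|,1)$ into intervals $R_k$ on which $1-r\asymp 2^{-k}(1-|a|)$, $k\ge 0$. On $L_k$ one has $1-|a|r\asymp 2^{k}(1-|a|)$ and $|L_k|\asymp 2^k(1-|a|)$; on $R_k$ one has $1-|a|r\asymp 1-|a|$ and $|R_k|\asymp 2^{-k}(1-|a|)$. Lemma~\ref{le:condinte}(i) supplies $\widehat\om(r)\le C\,2^{k\b}\widehat\om(|a|)$ on $L_k$ when $\om\in\R$, with $\b=\b(\om)$ fixed; Lemma~\ref{le:condinte}(ii) gives the same bound with an arbitrarily large $\b>0$ when $\om\in\I$; and on $R_k$ the monotonicity of $\widehat\om$ yields $\widehat\om(r)\le\widehat\om(|a|)$. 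Inserting these bounds, the contribution of $L_k$ is of order $2^{k(\b-\g)}\widehat\om(|a|)(1-|a|)^{-\g}$ while that of $R_k$ is of order $2^{-k}\widehat\om(|a|)(1-|a|)^{-\g}$. Under the constraint $\g>\b$ in case~(i), or by choosing $\b>\g$ in case~(ii), the resulting geometric series converge and yield
\begin{equation*}
J(a)\lesssim \frac{\widehat\om(|a|)}{(1-|a|)^{\g}}.
\end{equation*}
A matching lower bound comes from the single piece $r\in[|a|,(1+|a|)/2]$, where $\widehat\om(r)\asymp\widehat\om(|a|)$ and $1-|a|r\asymp 1-|a|$. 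Since the condition $\g>\g_0(\om)$ (or any $\g>0$ in case~(ii)) forces $\widehat\om(|a|)(1-|a|)^{-\g}\to\infty$ as $|a|\to 1$, the constant term $\widehat\om(0)$ is absorbed for $|a|$ near $1$, and the equivalence is trivial on compact subsets of~$\D$. Consequently,
\begin{equation*}
\int_\D\frac{\om(z)}{|1-\overline{a}z|^{\g+1}}\,dA(z)\asymp \frac{\widehat\om(|a|)}{(1-|a|)^{\g}}.
\end{equation*}

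To finish, a direct polar-coordinate computation on the Carleson square $S(a)$ gives $\om(S(a))\asymp(1-|a|)\,\widehat\om(|a|)$ for \emph{any} radial weight, so the displayed expression equals $\om(S(a))/(1-|a|)^{\g+1}$ up to constants; this yields part~(ii) and the first equivalence in part~(i). The second equivalence in (i) then follows from \eqref{eq:r0}, which for $\om\in\R$ provides $\widehat\om(|a|)\asymp\om(|a|)(1-|a|)$.

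The main obstacle, and the source of the dichotomy between (i) and (ii), is the control of $\widehat\om$ on $[0,|a|]$. In the regular case Lemma~\ref{le:condinte}(i) supplies only a polynomial bound with a \emph{fixed} exponent $\b=\b(\om)$, forcing the restriction $\g>\g_0(\om):=\b$; in the rapidly increasing case Lemma~\ref{le:condinte}(ii) offers polynomial decay of arbitrary order, so every $\g>0$ is admissible. The price of this flexibility in (ii) is the loss of the pointwise comparison $\widehat\om(|a|)\asymp \om(|a|)(1-|a|)$, which is precisely why the rightmost quantity in (i) has no analogue in (ii).
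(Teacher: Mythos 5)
Your argument is correct. Note that the paper offers no proof of this lemma at all: it is quoted as Lemma~\ref{Lemma:Zhu-type} directly from \cite[Lemma~2.3]{PelRat}, so there is no internal proof to compare with; your derivation --- Forelli--Rudin angular estimate, integration by parts to replace $\om$ by $\widehat\om$, a dyadic decomposition of the radial integral around $|a|$ controlled by Lemma~\ref{le:condinte}(i) (fixed exponent $\b(\om)$, whence $\g_0=\b(\om)$) in the regular case and by Lemma~\ref{le:condinte}(ii) (arbitrary exponent, whence every $\g>0$) in the rapidly increasing case, followed by $\om(S(a))\asymp(1-|a|)\widehat\om(|a|)$ and \eqref{eq:r0} --- is a sound, self-contained proof and is essentially the standard route used for such estimates in the cited memoir. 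Two minor points you should make explicit: the polar-coordinate reduction drops the Jacobian factor $r$, which is harmless because the two radial integrals differ only by a bounded contribution from $[0,1/2]$ that is absorbed by the main term $\widehat\om(|a|)(1-|a|)^{-\g}$ as $|a|\to1^-$; and that this main term indeed tends to infinity follows from Lemma~\ref{le:condinte}(i) exactly when $\g>\b(\om)$, and from Lemma~\ref{le:condinte}(ii) for every $\g>0$, which is consistent with your choice of $\g_0$ and with the dichotomy between (i) and (ii) that you describe.
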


\begin{lemma}\label{le:Mncomparable}
Let $\om\in\R$ such that $\int_0^1 \om(r)\,dr=1$, and let
$\{r_n\}_{n=0}^\infty$ be the sequence defined by \eqref{rn} with
$\alpha=1$. Then there exist constants
$\g_2=\gamma_2(\om)>\gamma_1=\gamma_1(\om)>0$ such that
    \begin{equation*}
    2^{\gamma_1}M_n\le M_{n+1}\le 2^{\gamma_2}M_n,\quad r_n\ge \max\left\{ \frac{1}{2^{\gamma_1}}, \frac{1}{2}
\right\}.
    \end{equation*}
\end{lemma}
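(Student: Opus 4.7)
The plan is to apply Lemma~\ref{le:condinte}(i) to the pair $(r,t)=(r_n,r_{n+1})$ and use the defining relation $\widehat{\om}(r_n)=2\widehat{\om}(r_{n+1})$ to pin down the ratio $(1-r_n)/(1-r_{n+1})$, then transfer the estimate to $M_n$ via the bracket $M_n\le 1/(1-r_n)<M_n+1$.

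Taking the exponents $\alpha=\alpha(\om)>0$ and $\beta=\beta(\om)\ge\alpha$ provided by Lemma~\ref{le:condinte}(i) and substituting $r=r_n$, $t=r_{n+1}$, I obtain at once
$$
2^{1/\beta}\le\frac{1-r_n}{1-r_{n+1}}\le 2^{1/\alpha}.
$$
The upper bound on $M_{n+1}$ then follows by the chain $M_{n+1}\le 1/(1-r_{n+1})\le 2^{1/\alpha}/(1-r_n)\le 2^{1/\alpha}(M_n+1)$, which yields $M_{n+1}\le 2^{1+1/\alpha}M_n$ as soon as $r_n\ge 1/2$ (so that $M_n\ge 2$); hence the choice $\gamma_2:=1+1/\alpha$ works.

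For the lower bound I estimate symmetrically, using $1/(1-r_{n+1})<M_{n+1}+1$ and $1/(1-r_n)\ge M_n$, to reach $M_{n+1}>2^{1/\beta}M_n-1$. Fixing any $\gamma_1\in(0,1/\beta)$, the desired inequality $M_{n+1}\ge 2^{\gamma_1}M_n$ reduces to $M_n\ge (2^{1/\beta}-2^{\gamma_1})^{-1}$, a finite constant depending only on $\om$ and $\gamma_1$. The hypothesis $r_n\ge 2^{-\gamma_1}$ forces $M_n>1/(1-r_n)-1\ge 1/(1-2^{-\gamma_1})-1$, and since this lower bound diverges as $\gamma_1\to 0^+$ while the target constant $(2^{1/\beta}-2^{\gamma_1})^{-1}$ stays bounded, choosing $\gamma_1$ small enough closes the argument and justifies the combined threshold $r_n\ge\max\{2^{-\gamma_1},1/2\}$.

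The main obstacle is precisely the additive $-1$ inherited from the integer-part truncation in the definition of $M_n$: it prevents the natural choice $\gamma_1=1/\beta$ and is the sole reason for the auxiliary growth condition on $r_n$ in the statement. Regularity of $\om$ is essential throughout, since the two-sided power bound of Lemma~\ref{le:condinte}(i) is only available for $\om\in\R$; a rapidly increasing weight, covered only by the one-sided estimate of Lemma~\ref{le:condinte}(ii), would not suffice to control $(1-r_n)/(1-r_{n+1})$ from above and would lead to super-geometric growth of $M_n$.
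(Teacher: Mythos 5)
Your proof is correct and follows essentially the same route as the paper: both arguments apply Lemma~\ref{le:condinte}(i) to the pair $(r_n,r_{n+1})$ together with the halving relation $\widehat{\om}(r_n)=2\widehat{\om}(r_{n+1})$ from \eqref{rn} to control $(1-r_n)/(1-r_{n+1})$ between $2^{1/\beta}$ and $2^{1/\alpha}$, and then use the hypothesis $r_n\ge\max\{2^{-\gamma_1},\tfrac12\}$ to absorb the loss coming from the integer part in $M_n=E\bigl(\tfrac{1}{1-r_n}\bigr)$. The only difference is cosmetic: the paper absorbs that loss multiplicatively via the factor $r_{n+1}\ge 2^{-\gamma_1}$ and fixes $\gamma_1=\tfrac{1}{2\beta}$ explicitly, whereas you absorb it additively and take $\gamma_1$ sufficiently small, which is equally valid.
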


\begin{proof}
Using  Lemma~\ref{le:condinte}(i) and \eqref{rn}, we obtain
    \begin{equation*}
    \begin{split}
    \frac{M_{n+1}}{M_n}
    \ge\frac{r_{n+1}(1-r_n)}{1-r_{n+1}}
    \ge2^{-\gamma_1}\left(\frac{\widehat{\om}(r_n)}{\widehat{\om}(r_{n+1})}\right)^{1/\beta}
    =2^{\frac{1}{\beta}-\gamma_1},
    \end{split}
    \end{equation*}
where $\b=\b(\om)$ is from Lemma~\ref{le:condinte}(i). The left
hand inequality of the assertion follows by choosing
$\gamma_1=\frac{1}{2\beta}$. The right hand inequality can be
proved in an analogous manner.
\end{proof}

Several useful reformulations of the Muckenhoupt type condition
\eqref{eq:h1} are gathered to the following lemma.

\begin{lemma}\label{Lemma:u_p}
Let $1<p<\infty$ and let $\om$ be a radial weight, and denote
$u_p(r)=(\widehat{\om}(r)(1-r))^{-\frac1p}$. Then the following
conditions are equivalent:
\begin{itemize}
\item[\rm(i)] $\om\in\Mp$; \item[\rm(ii)]
$\widehat{\om}^{-\frac1{p-1}}\in\R$; \item[\rm(iii)] $u_p\in\R$;
\item[\rm(iv)]$\displaystyle
\frac{(1-r)^{p}}{\widehat{\om}(r)}\int_0^r\frac{\widehat{\om}(t)}{(1-t)^p}\,dt\asymp1-r,\quad
0\le r<1$.
\end{itemize}
\end{lemma}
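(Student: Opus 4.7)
My plan is first to strip the easy ``$\gtrsim$'' halves from (ii), (iii), (iv), which follow directly from the monotonicity of $\widehat\om$. Since $\widehat\om$ is non-increasing, both $\widehat\om(s)^{-1/(p-1)}$ and $(\widehat\om(s)(1-s))^{-1/p}$ are non-decreasing in $s$, yielding
\begin{equation*}
\int_r^1\widehat\om(s)^{-1/(p-1)}\,ds\ge(1-r)\widehat\om(r)^{-1/(p-1)},\qquad \int_r^1 u_p(s)\,ds\ge(1-r)u_p(r),
\end{equation*}
and for $\tfrac12\le r<1$ we likewise have $\int_0^r\widehat\om(t)(1-t)^{-p}\,dt\ge \widehat\om(r)\int_{1/2}^r(1-t)^{-p}\,dt\gtrsim\widehat\om(r)(1-r)^{-(p-1)}$. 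For $r<\tfrac12$ all quantities involved are comparable to positive constants, so the claimed equivalences are trivial there; I restrict attention to $r\ge\tfrac12$ in what follows.

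Writing $A(r)$ and $B(r)$ for the two integrals in \eqref{eq:h1}, the equivalence (i) $\Leftrightarrow$ ((ii) and (iv)) is then immediate: the bound $A(r)^{p-1}B(r)\lesssim 1$ of (i), combined with the free lower bound $B(r)\gtrsim\widehat\om(r)(1-r)^{-(p-1)}$, yields the nontrivial ``$\lesssim$'' half of (ii), and combined with $A(r)^{p-1}\gtrsim (1-r)^{p-1}\widehat\om(r)^{-1}$ it yields the nontrivial half of (iv); multiplying the upper halves of (ii) and (iv) recovers (i). It therefore suffices to close the loop (ii) $\Leftrightarrow$ (iii) $\Leftrightarrow$ (iv).

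For (ii) $\Rightarrow$ (iii) and (ii) $\Rightarrow$ (iv), I use that (ii) is exactly the statement $\om':=\widehat\om^{-1/(p-1)}\in\R$, since $\int_r^1\om'(s)\,ds=A(r)$. Applying Lemma~\ref{le:condinte}(i) to $\om'$ furnishes an exponent $\a>0$ and the polynomial controls
\begin{equation*}
\widehat\om(s)\gtrsim\left(\tfrac{1-s}{1-r}\right)^{(1-\a)(p-1)}\widehat\om(r)\ \ (r\le s),\quad \widehat\om(t)\lesssim\left(\tfrac{1-t}{1-r}\right)^{(1-\a)(p-1)}\widehat\om(r)\ \ (t\le r).
\end{equation*}
Substituting the first into $\int_r^1(\widehat\om(s)(1-s))^{-1/p}\,ds$ gives $\int_r^1(1-s)^{-[(1-\a)(p-1)+1]/p}\,ds$, whose exponent exceeds $-1$ precisely because $\a>0$; explicit evaluation then produces the upper half of (iii). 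Substituting the second into $B(r)$ leads to $\int_0^r(1-t)^{-1-\a(p-1)}\,dt\asymp(1-r)^{-\a(p-1)}/[\a(p-1)]$, which combines with the front factor $\widehat\om(r)(1-r)^{-(1-\a)(p-1)}$ to give (iv). The reverse (iii) $\Rightarrow$ (ii) runs the identical scheme with $u_p\in\R$ in place of $\om'\in\R$: Lemma~\ref{le:condinte}(i) applied to $u_p$ produces the refined lower bound $\widehat\om(s)\gtrsim((1-s)/(1-r))^{p-1-p\a'}\widehat\om(r)$ for some $\a'>0$, and inversion plus integration yields $A(r)\lesssim(1-r)\widehat\om(r)^{-1/(p-1)}$.

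The hard step will be (iv) $\Rightarrow$ (ii), which I see as the technical heart of the lemma. The obstacle is that (iv) combined with $B(s)\ge B(r)$ only yields the borderline estimate $\widehat\om(s)\gtrsim((1-s)/(1-r))^{p-1}\widehat\om(r)$, whose inversion gives a logarithmically divergent integral for $A(r)$. I will close the gap by a dyadic self-improvement argument: for $s=(r+1)/2$ the identity $\int_r^s(1-t)^{-p}\,dt\asymp(1-r)^{-(p-1)}$ combined with (iv) gives $B(s)-B(r)\gtrsim\widehat\om(s)(1-r)^{-(p-1)}\asymp B(s)$, and absorbing $B(s)$ to the left produces $B(s)\ge(1+c)B(r)$ for some $c=c(\om)>0$; iterating along $r_k=1-(1-r)/2^k$ yields $B(r_k)\ge(1+c)^k B(r)$, which in continuous form reads $B(s)\gtrsim B(r)((1-r)/(1-s))^\eta$ for $r\le s$, with $\eta=\log_2(1+c)>0$. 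Reinserting this into (iv) delivers the sharpened lower bound $\widehat\om(s)\gtrsim((1-s)/(1-r))^{p-1-\eta}\widehat\om(r)$, whose inversion is integrable on $[r,1)$ and produces $A(r)\lesssim(1-r)\widehat\om(r)^{-1/(p-1)}$, closing the chain.
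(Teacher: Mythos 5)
Your proposal is correct and follows essentially the same route as the paper: both factor the $\Mp$-quantity to obtain (i)$\Leftrightarrow$((ii) and (iv)), both use the regularity estimates of Lemma~\ref{le:condinte}(i) applied to $\widehat{\om}^{-1/(p-1)}$ (resp.\ $u_p$) to pass between (ii), (iii) and (iv), and both identify the implication out of (iv) as the step requiring a self-improvement of $B(r)=\int_0^r\widehat{\om}(t)(1-t)^{-p}\,dt$. The only real difference is cosmetic: the paper gets the power gain by noting that $(1-r)^{1/K}B(r)$ is increasing for $K$ large (a one-line derivative computation) and then deduces (iii), whereas you obtain the equivalent estimate $B(s)\gtrsim B(r)\left(\frac{1-r}{1-s}\right)^{\eta}$ by dyadic doubling and deduce (ii); either choice closes the loop once (ii)$\Leftrightarrow$(iii) is in place.
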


\begin{proof}
(i)$\Leftrightarrow$(ii). Observe first that
    \begin{equation*}
    \begin{split}
    &\left(\int_r^1\widehat{\om}(t)^{-\frac1{p-1}}dt\right)^{p-1}
    \int_0^r\frac{\widehat{\om}(t)}{(1-t)^p}\,dt\\
    &=\left(\frac{\widehat{\om}(r)^\frac1{p-1}\int_r^1\widehat{\om}(t)^{-\frac1{p-1}}dt}{1-r}\right)^{p-1}
    \cdot\frac{\frac{(1-r)^{p}}{\widehat{\om}(r)}\int_0^r\frac{\widehat{\om}(t)}{(1-t)^p}\,dt}{1-r}\\
    &\ge1^{p-1}\cdot\left(\frac{1-(1-r)^{p-1}}{p-1}\right),\quad0\le r<1,
    \end{split}
    \end{equation*}
and hence $\om\in\Mp$ if and only if (ii) and (iv) are satisfied.
Therefore, to see that (i) and (ii) are equivalent, it suffices to
show that (ii) implies (iv). To prove this, note that
    \begin{equation}\label{eq:mp1}
    \widehat{\om}(r)\asymp\frac{(1-r)^{p-1}}{\left(\int_r^1\widehat{\om}(t)^{-\frac1{p-1}}dt\right)^{p-1}},\quad
    0\le r<1,
    \end{equation}
whenever $\widehat{\om}^{-\frac1{p-1}}\in\R$. Therefore, under the
assumption (ii), the condition (iv) is equivalent to
    \begin{equation}\label{7.}
    \left(\int_r^1\widehat{\om}(s)^{-\frac1{p-1}}ds\right)^{p-1}
    \int_0^r\frac{dt}{(1-t)\left(\int_t^1\widehat{\om}(s)^{-\frac1{p-1}}ds\right)^{p-1}}\asymp1.
    \end{equation}
But since $\widehat{\om}^{-\frac1{p-1}}\in\R$,
Lemma~\ref{le:condinte}(i) shows that there exist $\a=\a(p,\om)>0$
and $\b=\b(p,\om)>0$ such that
    \begin{equation}\label{eq:mp2}
     \left(\frac{1-r}{1-t}\right)^\b\le\frac{\int_r^1\widehat{\om}(s)^{-\frac1{p-1}}ds}{\int_t^1\widehat{\om}(s)^{-\frac1{p-1}}ds}
    \le\left(\frac{1-r}{1-t}\right)^\a,\quad 0\le t\le r<1.
     \end{equation}
Hence the left-hand side of \eqref{7.} is dominated by
    $$
    (1-r)^{\a(p-1)}\int_0^r\frac{dt}{(1-t)^{1+\a(p-1)}}\lesssim1,
    $$
and (i)$\Leftrightarrow$(ii) follows. Note that the beginning of
this part of the proof also establishes the implication
(i)$\Rightarrow$(iv).

(ii)$\Leftrightarrow$(iii). If
$\widehat{\om}^{-\frac1{p-1}}\in\R$, then \eqref{eq:mp1} and
\eqref{eq:mp2} yield
    \begin{equation*}
    \begin{split}
    \frac{1}{u_p(r)}\int_r^1u_p(t)\,dt\asymp
    (1-r)\int_r^1\left(\frac{\int_t^1\widehat{\om}(s)^{-\frac1{p-1}}ds}{\int_r^1\widehat{\om}(s)^{-\frac1{p-1}}ds}\right)^\frac{p-1}{p}\frac{dt}{1-t}
    \asymp 1-r,
    \end{split}
    \end{equation*}
that is, $u_p\in\R$. The opposite implication
(iii)$\Rightarrow$(ii) can be proved in a similar manner.

(iv)$\Rightarrow$(iii). A calculation based on the assumption (iv)
shows that $F(r)=(1-r)^{\frac{1}{K}}\int_0^r
\frac{\ho(t)}{(1-t)^p}\,dt$ is increasing on $[0,1)$ for $K>0$
large enough. By using this and (iv) we deduce
    \begin{equation*}
    \begin{split}
    1-r\le\frac{\int_r^1 u_p(s)\,ds}{u_p(r)}&\asymp (1-r)\int_r^1
    \left(\frac{\int_0^r \frac{\ho(t)}{(1-t)^p}\,dt}{\int_0^s
    \frac{\ho(t)}{(1-t)^p}\,dt}\right)^{\frac{1}{p}}\,\frac{ds}{1-s}\\
    &\le(1-r)^{1-\frac{1}{Kp}}\int_r^1
    \frac{ds}{(1-s)^{1-\frac{1}{Kp}}}\asymp(1-r),
    \end{split}
    \end{equation*}
and thus $u_p\in\R$.

Since (i)$\Rightarrow$(iv) by the first part of the proof, the
lemma is now proved.
\end{proof}

\subsection{$L^p_\om$ behavior of power series with positive
coefficients}\label{SubSecL^p-behavior}

We begin with a technical but useful result. Recall that a
function $h$ is called essentially decreasing if there exists a
positive constant $C=C(h)$ such that $h(x)\le C h(y)$ whenever
$y\le x$. Essentially increasing functions are defined in an
analogous manner.

\begin{lemma}\label{le:serie}
Let $\om\in\I\cup\R$ such that $\int_0^1 \om(r)\,dr=1$. For each
$\a>0$ and $n\in\N\cup\{0\}$, let $r_n=r_n(\om,\a)\in[0,1)$ be
defined by \eqref{rn}. Then the following assertions hold:
\begin{enumerate}
\item[\rm(i)] For each $\gamma>0$, there exists
$C=C(\a,\gamma,\om)>0$ such that
    \begin{equation}\label{serie}
    \eta_\gamma(r)=\sum_{n=0}^\infty
    2^{n\gamma}r^{M_n}\le
    C\,\widehat{\om}(r)^{-\frac{\gamma}{\a}},\quad
    0\le r<1.
    \end{equation}
    \item[\rm(ii)] For each $0<\b<1$, there exists
$C=C(\a,\b,\om)>0$ such that
    \begin{equation}\label{momentos}
    2^{-n\a\b}\int_{0}^1
    \frac{r^{M_n}\om(r)}{\widehat{\om}(r)^{\b}}\,dr\le C\int_{0}^1
    r^{M_n}\om(r)\,dr.
    \end{equation}
    \item[\rm(iii)] If $\alpha=1$ in \eqref{rn}, $1<p<\infty$, $p\eta<1$ and $\om\in\R\cap\Mp$, then there exists $C=C(\eta,p,\om)>0$
such that
    \begin{equation}\label{serie3}
   \sum_{n=0}^\infty
   M_n^{1-\frac{1}{p}}
    2^{-n\eta}r^{M_n}\le
    C\frac{\widehat{\om}(r)^\eta}{(1-r)^{1-\frac{1}{p}}},\quad
    0\le r<1.
    \end{equation}
\end{enumerate}
\end{lemma}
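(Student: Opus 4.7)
For each $r\in[0,1)$ let $N=N(r)$ be the largest integer with $r_N\le r$; since $\widehat{\om}$ is continuous and decreasing with $\widehat{\om}(r_n)=2^{-n\alpha}$, one has $\widehat{\om}(r)\asymp 2^{-N\alpha}$, and in the setting of (iii) (where $\alpha=1$) also $1-r\asymp 1-r_N$ and $M_N\asymp 1/(1-r)$ by Lemma~\ref{le:Mncomparable}. All three parts will be proved by splitting $\sum_n$ at $n=N$: on the low range one exploits $r^{M_n}\le 1$ together with a geometric sum, while on the tail one uses the doubly exponential decay of $r^{M_n}$ that arises because $r<r_n$ forces $M_n(1-r)$ to grow geometrically in $n-N$.

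\textbf{Parts (i) and (ii).} For (i), the low range yields $\sum_{n=0}^N 2^{n\gamma}\asymp 2^{N\gamma}\asymp\widehat{\om}(r)^{-\gamma/\alpha}$. For the tail, $r^{M_n}\le e^{-M_n(1-r_{N+1})}$; Lemma~\ref{le:condinte}(i) (when $\om\in\R$), or Lemma~\ref{le:condinte}(ii) with a sufficiently small auxiliary exponent (when $\om\in\I$), gives $(1-r_{N+1})/(1-r_n)\ge c A^{n-N-1}$ for some $A>1$, hence $r^{M_n}\le\exp(-cA^{n-N-1})$ and the tail sum is $\lesssim 2^{N\gamma}$. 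For (ii), set $\widetilde{\om}=\widehat{\om}^{-\beta}\om$; Lemma~\ref{le:condinte}(iii) shows $\widetilde{\om}\in\I\cup\R$, and direct integration of $\widehat{\om}'=-\om$ yields $\widehat{\widetilde{\om}}(r)=(1-\beta)^{-1}\widehat{\om}(r)^{1-\beta}$. Applying Lemma~\ref{le:condinte}(iv) to both weights at $x=M_n$ and using $1-1/M_n\asymp r_n$ gives
\[
\int_0^1 r^{M_n}\om(r)\,dr\asymp\widehat{\om}(r_n)=2^{-n\alpha},\qquad \int_0^1 r^{M_n}\widetilde{\om}(r)\,dr\asymp 2^{-n\alpha(1-\beta)},
\]
whose ratio $2^{n\alpha\beta}$ is exactly the content of (ii).

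\textbf{Part (iii), the main obstacle.} This is the only place where $\Mp$ enters, and it does so through a quantitative refinement of the upper regularity exponent. By Lemma~\ref{Lemma:u_p}, $\om\in\Mp$ is equivalent to $\mu:=\widehat{\om}^{-1/(p-1)}\in\R$. Applying Lemma~\ref{le:condinte}(i) to $\mu$ with exponent $\alpha_1\in(0,1)$ and unwinding via $\widehat{\mu}(r)\asymp(1-r)\widehat{\om}(r)^{-1/(p-1)}$ forces the upper exponent $\beta_0$ of $\om$ in Lemma~\ref{le:condinte}(i) to satisfy $\beta_0\le(1-\alpha_1)(p-1)<p-1$. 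Consequently the hypothesis $p\eta<1$ yields the decisive inequality $\beta_0\eta<s:=1-1/p$. For $n\le N$, Lemma~\ref{le:condinte}(i) gives $M_n/M_N\asymp(1-r_N)/(1-r_n)\le C\,2^{-(N-n)/\beta_0}$, hence
\[
\sum_{n=0}^N M_n^{s}2^{-n\eta}\le C M_N^{s}\,2^{-Ns/\beta_0}\sum_{n=0}^N 2^{n(s/\beta_0-\eta)}\asymp M_N^{s}\,2^{-N\eta}\asymp(1-r)^{-s}\widehat{\om}(r)^{\eta},
\]
the geometric convergence being ensured precisely by $s/\beta_0>\eta$. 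For $n>N$, Lemma~\ref{le:Mncomparable} gives $M_n^{s}\le M_N^{s}2^{(n-N)\gamma_2 s}$, which is dwarfed by the doubly exponential decay of $r^{M_n}$ established in the argument for (i), so the tail is also $\lesssim M_N^{s}2^{-N\eta}$. The principal difficulty lies in distilling $\beta_0<p-1$ from the Muckenhoupt condition; once that is in hand, the remainder reduces to summing geometric and super-geometric series.
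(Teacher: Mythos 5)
Your proposal is correct, and its skeleton is the paper's: split each sum at the index $N=N(r)$ with $r_N\le r<r_{N+1}$, control the low range by a geometric sum and the tail by the doubly exponential decay of $r^{M_n}$ extracted from Lemma~\ref{le:condinte} and \eqref{rn}. The genuine variations are in (ii) and in how (iii) uses $\om\in\Mp$. For (ii), the paper splits $\int_0^1=\int_0^{r_n}+\int_{r_n}^1$ and treats the second piece with Lemma~\ref{le:condinte}(iii) and $r^{M_n}\gtrsim1$ on $[r_n,1)$, whereas you evaluate both moments directly from Lemma~\ref{le:condinte}(iv) applied to $\om$ and to $\widetilde{\om}=\widehat{\om}^{-\b}\om$, using the closed form $\int_r^1\widetilde{\om}(s)\,ds=(1-\b)^{-1}\widehat{\om}(r)^{1-\b}$; this is a clean equivalent route (what you actually need from ``$1-\frac1{M_n}\asymp r_n$'' is $\frac1{M_n}\asymp1-r_n$ combined with Lemma~\ref{le:condinte}(i)--(ii) to get $\widehat{\om}\left(1-\frac1{M_n}\right)\asymp\widehat{\om}(r_n)$, and the comparison constants are allowed to depend on $\b$ and $\om$). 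For (iii), the paper reads Lemma~\ref{Lemma:u_p} as saying that $(1-r)^{1-\frac1p}\widehat{\om}(r)^{-\frac1p}$, equivalently the sequence $2^{n/p}M_n^{-(1-\frac1p)}$, is essentially decreasing --- which is precisely upper regularity of $\widehat{\om}$ with exponent $p-1$ up to a multiplicative constant --- and then the strict hypothesis $p\eta<1$ makes the low-range geometric sum converge; you instead extract a strict exponent $\beta_0\le(1-\alpha_1)(p-1)<p-1$. Your derivation of this is sound (note that what it yields is the upper inequality of Lemma~\ref{le:condinte}(i) with an extra constant, which is all you use), but calling it the principal difficulty overstates matters: the exponent-$(p-1)$ form already gives $\left(1-\frac1p\right)/\beta_0-\eta\ge\frac1p-\eta>0$, which is exactly how the paper sums the same series, so the sharper $\beta_0$ buys nothing here, though it does no harm.
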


\begin{proof}
(i). We will begin with proving \eqref{serie} for $r=r_N$, where
$N\in\N$. To do this, note first that
    \begin{equation}\label{serie2}
    \begin{split}
    \sum_{n=0}^N2^{n\gamma}r_N^{M_n}
    \le \frac{2^\gamma}{2^\gamma-1}
    \,\widehat{\om}(r_N)^{-\frac{\gamma}{\a}}
    \end{split}
    \end{equation}
by \eqref{rn}. To deal with the remainder of the sum, we apply
Lemma~\ref{le:condinte}(i)(ii) and \eqref{rn} to find
$\b=\beta(\om)>0$ and $C=C(\b,\om)>0$ such that
    $$
    \frac{1-r_n}{1-r_{n+j}}\ge C\left(\frac{\widehat{\om}(r_n)}{\widehat{\om}(r_{n+j})}\right)^{1/\b}
    = C2^{\frac{j\a}{\b}},\quad n,j\in\N\cup\{0\}.
    $$
This, the inequality $\log\frac1x\ge1-x$, $0<x\le1$, and
\eqref{rn} give
    \begin{equation*}
    \begin{split}
    \sum_{n=N+1}^\infty 2^{n\gamma}r_N^{M_n}
    &\le2^{N\g}\sum_{j=1}^\infty 2^{j\g}e^{-r_{N+j}\frac{1-r_N}{1-r_{N+j}}}
    \le2^{N\g}\sum_{j=1}^\infty 2^{j\g}e^{-r_2C2^{\frac{j\a}{\b}}}\\
    &=C(\b,\a,\gamma,\om)\,\widehat{\om}(r_N)^{-\frac{\gamma}{\a}}.
    \end{split}
    \end{equation*}
Since $\b=\b(\om)$, this together with \eqref{serie2} gives
\eqref{serie} for $r=r_N$, where $N\in\N$. Now, using standard arguments, it implies \eqref{serie} for any $r\in(0,1)$.

(ii). Clearly,
    \begin{equation}\label{60}
    \begin{split}
    2^{-n\a\b}\int_{0}^{r_n}
    r^{M_n}\widetilde{\om}(r)\,dr&\le
    \frac{2^{-n\a\b}}{\widehat{\om}(r_n)^{\b}} \int_{0}^{r_n}
    r^{M_n}\om(r)\,dr\le \int_{0}^1 r^{M_n}\om(r)\,dr.
    \end{split}
    \end{equation}
Moreover, Lemma~\ref{le:condinte}(iii) yields
    \begin{equation}\label{61}
    \begin{split}
    2^{-n\a\b}\int_{r_n}^1
    r^{M_n}\widetilde{\om}(r)\,dr
    &\le2^{-n\a\b}\widetilde{\om}(r_n)\psi_{\widetilde{\om}}(r_n)
    =\frac{2^{-n\a\b}}{1-\b}\widetilde{\om}(r_n)\psi_{\om}(r_n)\\
    &=\frac{1}{1-\b}\int_{r_n}^1\om(r)\,dr\le C(\b,\a,\om)\int_{r_n}^1 r^{M_n}\om(r)\,dr.
    \end{split}
    \end{equation}
By combining \eqref{60} and \eqref{61} we obtain (ii).

(iii). The proof is similar to that of (i). We will begin with
proving \eqref{serie3} for $r=r_N$, where $N\in\N$. Since $\om\in
\Mp$, Lemma~\ref{Lemma:u_p} yields
$\widehat{\om}^{-\frac{1}{p-1}}\in \R$, that is,
    $$
    \left(\int_r^1\widehat{\om}^{-\frac{1}{p-1}}(s)\,ds\right)^{\frac{p-1}{p}}\asymp (1-r)^{\frac{p-1}{p}}\widehat{\om}(r)^{-\frac1p},
    $$
so taking $r=r_n$ and bearing in mind Lemma~\ref{le:Mncomparable}
we deduce that the sequence
$\left\{\frac{2^{\frac{n}{p}}}{M_n^{\frac{p-1}{p}}}\right\}$ is
essentially decreasing. Therefore
    \begin{equation}
    \begin{split}\label{eq:qp2}
    \sum_{n=0}^N
    M_n^{1-\frac1p}2^{-n\eta}r_N^{M_n}
    &\lesssim  M_N^{1-\frac1p}2^{\frac{-N}{p}}\sum_{n=0}^N 2^{n\left(\frac{1}{p}-\eta\right)}
    \asymp\frac{\widehat{\om}(r_N)^{\eta}}{(1-r_N)^{1-\frac1p}}.
   \end{split}
   \end{equation}
Moreover, bearing in mind Lemma~\ref{le:Mncomparable}, the
inequality $\log\frac1x\ge1-x$, $0<x\le1$, and the boundedness of
the function $x^{s}e^{-tx}$, $s,t>0$, on $[0,\infty)$, we obtain
    \begin{equation*}
    \begin{split}
    \sum_{n=N+1}^\infty M_n^{1-\frac1p}2^{-n\eta}r_N^{M_n}
    &\lesssim 2^{-N\eta}\sum_{j=0}^\infty M_{j+N+1}^{1-\frac1p}2^{-j\eta}e^{-C\frac{M_{j+N+1}}{M_N}}\\
    &\lesssim  2^{-N\eta}  M_{N}^{1-\frac1p}\asymp  \frac{\widehat{\om}(r_N)^{\eta}}{(1-r_N)^{1-\frac1p}},
    \end{split}
    \end{equation*}
which together with \eqref{eq:qp2} gives (iii) for $r=r_N$.
Finally, by using Lemma~\ref{le:Mncomparable}, \eqref{rn} and the
fact that $(1-r)^{\frac{p-1}{p}}\widehat{\om}(r)^{-\frac1p}$ is
essentially decreasing, we obtain (iii) for any $r\in (0,1)$.
\end{proof}

We now present a result on power series with positive
coefficients. This result will play a crucial role in the proof of
Theorem~\ref{th:dec1}.

\begin{proposition}\label{pr:DecompApw}
Let $0<p,\alpha<\infty$ and $\om\in\I\cup\R$ such that $\int_0^1
\om(r)\,dr=1$. Let $f(r)=\sum_{k=0}^\infty a_k r^k$, where $a_k\ge
0$ for all $k\in\N\cup\{0\}$, and denote $t_n=\sum_{k\in
I_{\om,\a}(n)}a_k$. Then there exists a constant $C=C(p,\a,\om)>0$ such that
    \begin{equation}\label{j13}
    \frac{1}{C}\sum_{n=0}^\infty 2^{-n\a}t_n^p\le \int_{0}^1
    f(r)^p\om(r)\,dr\le C \sum_{n=0}^\infty 2^{-n\a}t_n^p.
    \end{equation}
\end{proposition}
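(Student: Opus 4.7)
The key observation is that $f(r)=\sum_{k}a_k r^k$ is non-decreasing on $[0,1)$, since $a_k\ge 0$. Exploiting this monotonicity, I would first reduce the continuous integral to a discrete sum over the sequence $\{r_n\}$. Because $\int_{r_n}^{r_{n+1}}\om(r)\,dr=\widehat\om(r_n)-\widehat\om(r_{n+1})=(1-2^{-\alpha})\,2^{-n\alpha}$, the monotonicity sandwich
\[
f(r_n)^p\bigl(\widehat\om(r_n)-\widehat\om(r_{n+1})\bigr)\le\int_{r_n}^{r_{n+1}}f^p\om\,dr\le f(r_{n+1})^p\bigl(\widehat\om(r_n)-\widehat\om(r_{n+1})\bigr),
\]
together with the index-shift identity $\sum_n 2^{-n\alpha}f(r_{n+1})^p=2^{\alpha}\sum_{m\ge 1}2^{-m\alpha}f(r_m)^p$, yields $\int_0^1 f^p\om\,dr\asymp\sum_n 2^{-n\alpha}f(r_n)^p$. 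The proposition then reduces to the purely discrete comparison $\sum_n 2^{-n\alpha}f(r_n)^p\asymp\sum_n 2^{-n\alpha}t_n^p$, and no Khintchine-type device is needed because the coefficients are non-negative.

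For the lower bound I would use that $k<M_{n+1}$ for every $k\in I(n)$, so that $f(r_{n+1})\ge P_n(r_{n+1})\ge r_{n+1}^{M_{n+1}}\,t_n$. Since $M_{n+1}=E(1/(1-r_{n+1}))$ forces $M_{n+1}(1-r_{n+1})\le 1$, one obtains $r_{n+1}^{M_{n+1}}\ge c_0(\om,\alpha)>0$ uniformly in $n$, and therefore $f(r_{n+1})\ge c_0\,t_n$; the desired lower bound follows after an index shift.

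For the upper bound I would decompose $f(r_n)=\sum_m P_m(r_n)$ and apply the estimates $P_m(r_n)\le t_m$ for $m\le n$ and $P_m(r_n)\le t_m r_n^{M_m}$ for $m>n$. Lemma~\ref{le:condinte}(i) for $\om\in\R$ and (ii) for $\om\in\I$ both produce a constant $\delta>1$ with $M_m/M_n\gtrsim\delta^{m-n}$, which in turn gives $r_n^{M_m}\lesssim e^{-c\delta^{m-n}}$ whenever $m>n$. Hence $f(r_n)\le T_n+S_n$ with $T_n=\sum_{m\le n}t_m$ and $S_n=\sum_{m>n}t_m e^{-c\delta^{m-n}}$, and after using $(T_n+S_n)^p\le C_p(T_n^p+S_n^p)$ the task reduces to the two discrete Hardy-type inequalities $\sum_n 2^{-n\alpha}T_n^p\lesssim\sum_n 2^{-n\alpha}t_n^p$ and $\sum_n 2^{-n\alpha}S_n^p\lesssim\sum_n 2^{-n\alpha}t_n^p$. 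These follow, after the substitution $u_n=2^{-n\alpha/p}t_n$, from Young's convolution inequality on $\ell^p$ (for $p\ge 1$) or from the subadditivity $(x+y)^p\le x^p+y^p$ (for $0<p<1$), once one notices that the relevant rescaled kernels, $2^{-j\alpha/p}$ for $T_n$ and $2^{j\alpha/p}e^{-c\delta^j}$ for $S_n$, are $\ell^1$-summable thanks to the super-geometric decay of the second kernel.

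The main obstacle is to handle both weight classes uniformly: for $\om\in\I$ the ratio $M_{n+1}/M_n$ can grow arbitrarily fast, whereas for $\om\in\R$ Lemma~\ref{le:condinte}(i) yields only bounded geometric comparability. One must extract from Lemma~\ref{le:condinte} the \emph{same} qualitative conclusion $M_m/M_n\gtrsim\delta^{m-n}$, and the matching super-geometric decay of $r_n^{M_m}$, in both cases, and then verify that the resulting discrete Hardy inequalities close up uniformly over the whole range $0<p<\infty$, including the finitely many initial indices where $r_{n+1}$ is not yet close to $1$ and must be absorbed into constants depending on $\om$ and $\alpha$.
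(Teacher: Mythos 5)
Your argument is correct, and for the upper bound it takes a genuinely different route from the paper. The paper never discretizes the integral: for the upper estimate it majorizes $f(r)\le\sum_n t_nr^{M_n}$, applies H\"older's inequality against the auxiliary series $\eta_\gamma(r)=\sum_n 2^{n\gamma}r^{M_n}$ with $0<\gamma<\alpha/(p-1)$, and then finishes with the moment estimates of Lemma~\ref{le:serie}(i)--(ii) and Lemma~\ref{le:condinte}(iv), namely $\eta_\gamma(r)\lesssim\widehat{\om}(r)^{-\gamma/\alpha}$, $2^{-n\alpha\beta}\int_0^1 r^{M_n}\om(r)\widehat{\om}(r)^{-\beta}dr\lesssim\int_0^1r^{M_n}\om(r)\,dr$ and $\int_0^1r^{M_n}\om(r)\,dr\asymp 2^{-n\alpha}$ (the case $0<p\le1$ being handled separately by subadditivity). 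You instead first replace $\int_0^1f^p\om$ by $\sum_n 2^{-n\alpha}f(r_n)^p$ via the monotonicity sandwich, and then prove the purely discrete comparison by splitting $f(r_n)\le T_n+S_n$ and invoking Young's convolution inequality on $\ell^p$ (resp. $p$-subadditivity for $0<p<1$); the summability of your second kernel rests on exactly the same super-geometric decay $r_n^{M_m}\lesssim e^{-c\delta^{m-n}}$, with $\delta=2^{\alpha/\beta}$ coming from Lemma~\ref{le:condinte}(i)--(ii) and \eqref{rn}, that the paper isolates inside Lemma~\ref{le:serie}(i). Your lower bound is essentially the paper's (both hinge on $r_{n+1}^{M_{n+1}}\ge c_0>0$, which follows from $M_{n+1}\le 1/(1-r_{n+1})$ and $r_{n+1}\ge r_1>0$). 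What your route buys is a self-contained elementary treatment of all $0<p<\infty$ that bypasses the weighted moment estimate Lemma~\ref{le:serie}(ii) and the auxiliary weight $\widetilde{\om}$; what the paper's route buys is that the intermediate objects ($\eta_\gamma$ and the moment estimates) are reused elsewhere, e.g.\ in the proofs of Theorem~\ref{th:dec1}(ii), Corollary~\ref{co:mixtogamma} and Theorem~\ref{th:lacunaryinfinity}. The only points to write out carefully in your version are the uniform constants: the finitely many indices with $r_m<1/2$ when converting $\frac{1-r_n}{1-r_m}\gtrsim\delta^{m-n}$ into $M_m/M_n\gtrsim\delta^{m-n}$, and the fact that $M_m(1-r_m)\ge r_m\ge r_1$ so that $M_m(1-r_n)\gtrsim\delta^{m-n}$; both are routine and depend only on $\om$ and $\alpha$.
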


\begin{proof}
We will use ideas from the proof of \cite[Theorem~6]{MatelPav}.
The definition \eqref{rn} yields
    \begin{equation*}
    \begin{split}
    \int_{0}^1f(r)^p\om(r)\,dr &\ge
    \sum_{n=0}^\infty\int_{r_{n+1}}^{r_{n+2}}\left(\sum_{k=0}^\infty t_k
    r^{M_{k+1}}\right)^p\om(r)\,dr\\
    & \ge \sum_{n=0}^\infty\left(\sum_{k=0}^n t_k r_{n+1}^{M_{k+1}}\right)^p\int_{r_{n+1}}^{r_{n+2}}\om(r)\,dr\\
    & \ge \left(1-\frac1{2^\a}\right)\sum_{n=0}^\infty t^p_n r_{n+1}^{pM_{n+1}}2^{(-n-1)\a}\ge C \sum_{n=0}^\infty t^p_n2^{-n\a},
    \end{split}
    \end{equation*}
where $C=C(p,\a,\om)>0$ is a constant. This gives the first
inequality in~\eqref{j13}.

To prove the second inequality in~\eqref{j13}, let first $p>1$ and
take $0<\g<\frac{\a}{p-1}$. Then H\"older's inequality gives
    \begin{equation}\label{10.}
    \begin{split}
    f(r)^p\le \left(\sum_{n=0}^\infty t_n r^{M_{n}}\right)^p\le
    \eta_{\g}(r)^{p-1}\sum_{n=0}^\infty 2^{-n\gamma(p-1)}t_n^p
    r^{M_{n}}.
    \end{split}
    \end{equation}
Therefore, by \eqref{serie} and \eqref{momentos} in
Lemma~\ref{le:serie} and  Lemma~\ref{le:condinte}(iv) there exist
constants $C_1=C_1(\a,\g,p,\om)>0$, $C_2=C_2(\a,\g,p,\om)>0$ and
$C_3=C_3(\a,\g,p,\om)>0$ such that
    \begin{equation*}
    \begin{split}
    \int_{0}^1f(r)^p\om(r)\,dr &\le \sum_{n=0}^\infty2^{-n\gamma(p-1)}t_n^p\int_{0}^1
    r^{M_{n}}\eta_{\g}(r)^{p-1}\om(r)\,dr\\
    &\le C_1\sum_{n=0}^\infty  2^{-n\gamma(p-1)}t_n^p\int_{0}^1\frac{r^{M_{n}}\om(r)}{\widehat{\om}(r)^{\frac{\gamma(p-1)}{\a}}}\,dr\\
    &\le C_2\sum_{n=0}^\infty t^p_n\int_{0}^1 r^{M_{n}}\om(r)\,dr\\
    &\le C_3\sum_{n=0}^\infty t^p_n\,\widehat{\om}(r_n)\,dr=C_3\sum_{n=0}^\infty
    t^p_n2^{-n\alpha}.
    \end{split}
    \end{equation*}
Since $\g=\g(\a,p)$, this gives the assertion for $1<p<\infty$.
The proof of the case $0<p\le1$ is similar but easier.
\end{proof}

\subsection{Decomposition theorems}

In this section we will prove Theorem~\ref{th:dec1} and  related
results as well as discuss their consequences. For
$g(z)=\sum_{k=0}^\infty b_k z^k\in\H(\D)$ and
$n_1,n_2\in\N\cup\{0\}$, we set
    $$
    S_{n_1,n_2}g(z)=\sum_{k=n_1}^{n_2-1}b_kz^k,\quad n_1<n_2.
    $$
We will use repeatedly the following auxiliary result.

\begin{lemma}\label{le:4}
Let $0<p\le \infty$ and $n_1,n_2\in\N$ with $n_1<n_2$. If
$g(z)=\sum_{k=0}^{\infty} c_k z^k\in \H(\D)$, then
\begin{equation*}
\| S_{n_1,n_2}g\|_{H^p}\asymp M_p\left(1-\frac{1}{n_2}, S_{n_1,n_2}g\right).
\end{equation*}
\end{lemma}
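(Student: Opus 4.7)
The inequality $M_p(1-1/n_2, S_{n_1,n_2}g) \le \|S_{n_1,n_2}g\|_{H^p}$ is immediate from Hardy's monotonicity theorem for $r\mapsto M_p(r,f)$, applied to the polynomial $P:=S_{n_1,n_2}g$ (which, being entire, has $M_p(r,P)$ well defined for every $r\in[0,\infty)$).

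For the reverse inequality I would invoke the classical polynomial fact that for each $0<p\le\infty$ there exists $C_p>0$ such that
\[
M_p(1,Q)\le C_p\, M_p(1-1/N, Q)
\]
for every analytic polynomial $Q$ of degree at most $N$. Applying this with $Q=P$ and $N=n_2-1$, and using the monotonicity of $M_p$ in $r$ to swallow the mismatch between $1/(n_2-1)$ and $1/n_2$, one obtains the desired bound $\|S_{n_1,n_2}g\|_{H^p}\lesssim M_p(1-1/n_2,\, S_{n_1,n_2}g)$.

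To prove the classical fact for $p\ge 1$, start from the pointwise identity
\[
Q(e^{i\theta}) - Q(re^{i\theta}) = \int_r^1 e^{i\theta}\, Q'(te^{i\theta})\, dt,
\]
apply Minkowski's integral inequality in $\theta$, and combine with the Bernstein-type estimate $M_p(t,Q')\le (N/t)\, M_p(t,Q)$ (a consequence of the classical Bernstein inequality for trigonometric polynomials, after noting that $Q_t(z)=Q(tz)$ is again a polynomial of degree $\le N$). This produces
\[
M_p(1,Q)\le M_p(r,Q) + \frac{(1-r)N}{r}\, M_p(1,Q).
\]
Choosing $r=1-c/N$ with $c$ small enough that $c/r<1/2$ absorbs the derivative term into the left-hand side and gives $M_p(1,Q)\lesssim M_p(1-c/N,Q)$. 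A bounded number of iterations, obtained by applying the same estimate to the dilates $Q(\rho\, \cdot)$, pushes the argument of $M_p$ on the right from $1-c/N$ down to $1-1/N$ at the cost of a larger but still $N$-independent constant. For $0<p<1$, Minkowski's inequality is replaced by the sub-additivity $\|f+g\|_{H^p}^p\le \|f\|_{H^p}^p+\|g\|_{H^p}^p$ of the $p$-th power of the $H^p$-quasinorm, and the appropriate Bernstein inequality (valid with a constant depending only on $p$) is invoked; the structure of the argument is otherwise unchanged.

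The only mildly delicate step is the bookkeeping in the absorption/iteration that drives $r$ from $1-c/N$ to exactly $1-1/N$; this is routine once the Bernstein inequality is in hand and hides no genuine difficulty.
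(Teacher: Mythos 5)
Your plan works for $1\le p\le\infty$: the easy direction is just monotonicity of $r\mapsto M_p(r,\cdot)$, and the Minkowski--Bernstein absorption argument, together with the iteration over dilates, does give $\|Q\|_{H^p}\le C_pM_p(1-1/N,Q)$ for polynomials of degree at most $N$. One cosmetic repair: apply it with $N=n_2$ rather than $N=n_2-1$. The degree of $S_{n_1,n_2}g$ is at most $n_2-1\le n_2$, the radius is then exactly $1-1/n_2$ so there is no mismatch to swallow, and you avoid the degenerate case $n_2=2$, where your choice produces the radius $1-1/(n_2-1)=0$ and the claimed polynomial inequality is false (take $Q(z)=z$).

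The genuine gap is the range $0<p<1$, which is part of the statement. Replacing Minkowski by the subadditivity of $\|\cdot\|_{H^p}^p$ does not salvage the step based on $Q(e^{i\theta})-Q(re^{i\theta})=\int_r^1 e^{i\theta}Q'(te^{i\theta})\,dt$: subadditivity controls finite sums, and there is no integral form of Minkowski's inequality for $p<1$, so you cannot pass from this identity to $\|Q-Q_r\|_{L^p(\T)}\lesssim\int_r^1M_p(t,Q')\,dt$. A repair along your lines would need, e.g., the pointwise bound $|Q(e^{i\theta})-Q(re^{i\theta})|\le(1-r)\sup_{r\le t\le1}|Q'(te^{i\theta})|$ together with the radial maximal function estimate in $H^p$ and Arestov's Bernstein inequality for $p<1$; that is genuinely heavier machinery, so the structure of the argument does change. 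The paper sidesteps all of this: it quotes the two-sided dilation estimate \eqref{Eq-MaPa}, namely $r^{n_2}\|S_{n_1,n_2}g\|_{H^p}\le M_p(r,S_{n_1,n_2}g)\le r^{n_1}\|S_{n_1,n_2}g\|_{H^p}$ from \cite[Lemma~3.1]{MatelPavstu}, which holds for all $0<p\le\infty$ because it follows from the monotonicity of the means applied to a reversed and dilated polynomial (no convexity, no Bernstein), and then simply sets $r=1-1/n_2$, using $(1-1/n_2)^{n_2}\ge 1/4$. If you want a self-contained proof covering $p<1$, prove that dilation inequality instead of running the Bernstein route; your argument as written only establishes the lemma for $1\le p\le\infty$.
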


Lemma~\ref{le:4} can be proved, for example, by using the
inequality
\begin{equation}\label{Eq-MaPa}
     r^{n_2}\|S_{n_1,n_2}g\|_{H^p}\le M_p(r,S_{n_1,n_2}g)\le r^{n_1}\|S_{n_1,n_2}g\|_{H^p}, \quad
     0<r<1,
    \end{equation}
which follows by \cite[Lemma~3.1]{MatelPavstu}.

\medskip

\noindent\emph{Proof of} Theorem~\ref{th:dec1}. (i). By the
M.~Riesz projection theorem and \eqref{Eq-MaPa},
    \begin{equation*}
    \begin{split}
    \|f\|_{H(p,q,\om)}
    &\gtrsim\sum_{n=0}^\infty \|\Delta^{\om,\a}_n f\|_{H^p}^q\int_{r_{n+1}}^{r_{n+2}}
    r^{q{M_{n+1}}}\om(r)\,dr\\
    &\asymp\sum_{n=0}^\infty \|\Delta^{\om,\a}_n f\|_{H^p}^q\int_{r_{n+1}}^{r_{n+2}}\om(r)\,dr
    \asymp\sum_{n=0}^\infty 2^{-n\a}\|\Delta^{\om,\a}_n f\|_{H^p}^q.
    \end{split}
    \end{equation*}
On the other hand, Minkowski's inequality and \eqref{Eq-MaPa} give
    \begin{equation}\label{1}
    M_p(r,f)\le\sum_{n=0}^\infty M_p(r,\Delta^{\om,\a}_nf)\le\sum_{n=0}^\infty
    r^{M_n}\|\Delta^{\om,\a}_n f\|_{H^p},
    \end{equation}
and hence Proposition~\ref{pr:DecompApw} yields
    \begin{equation*}
    \begin{split}
    \|f\|_{H(p,q,\om)}\le\int_0^1 \left( \sum_{n=0}^\infty
    r^{M_n} \|\Delta^{\om,\a}_n f\|_{H^p}\right)^q\om(r)\,dr
    \asymp\sum_{n=0}^\infty 2^{-n\a}\|\Delta^{\om,\a}_n
    f\|_{H^p}^q.
    \end{split}
    \end{equation*}

(ii). Using again the M.~Riesz projection theorem and
\eqref{Eq-MaPa} we deduce
    \[
    \sup_{0<r<1}M_p(r,f)\,\widehat{\om}(r)^\b\gtrsim
    r_{n+1}^{{M_{n+1}}}\|\Delta^{\om,\a}_nf\|_{H^p}2^{-n\a\beta},\quad
    n\in\N\cup\{0\},
    \]
and hence
    $$
    \|f\|_{H(p,\infty,\widehat{\om}^{\b})}\gtrsim\sup_n2^{-n\a\beta}
    \|\Delta^{\om,\a}_nf\|_{H^p}.
    $$
Conversely, assume that $M=\sup_n 2^{-n\a\beta}\|
\Delta^{\om,\a}_n f\|_{H^p}<\infty$. Then \eqref{1} and
Lemma~\ref{le:serie}(i) yield
    \[
    \begin{aligned}
    M_p(r,f)&\le\sum_{n=0}^\infty r^{M_n}\|\Delta^{\om,\a}_n f\|_{H^p}\le M\sum_{n=0}^\infty
    2^{n\a\beta}r^{M_n} \lesssim M
    \widehat{\om}(r)^{-\beta}.
    \end{aligned}
    \]
This finishes the proof. \hfill$\Box$

\medskip

Now we will present a couple of results which will be strongly
used in the proof of Theorem~\ref{mainth}. We saw in
Theorem~\ref{th:dec1} that the $H(p,q,\om)$-norm of $f\in\H(\D)$
can be written in terms of $H^p$-norms of the polynomials
$\Delta^{\om,\a}_nf$ if $\om\in\I\cup\R$. The next result shows
that the same polynomials work also in the case of $H(p,q,\om_\g)$
whenever $\g\ge0$ and $\om\in\R$. In both,
Corollary~\ref{co:mixtogamma} and Corollary~\ref{Lemma:q>p},
$M_n=E\left(\frac1{1-r_n}\right)$, where $r_n$ is defined by
\eqref{rn} with $\a=1$.

\begin{corollary}\label{co:mixtogamma}
Let $1<q<\infty$, $0<p<\infty$, $0\le\g<\infty$, $\om\in\R$ such
that $\int_0^1\om(r)\,dr=1$ and $g\in\H(\D)$. Then
    \begin{equation*}
   \|g\|^p_{H(q,p,\om_\g)}= \int_0^1M_q^p(r,g)(1-r)^\gamma\om(r)\,dr\asymp\sum_{n=0}^\infty
    2^{-n}\frac{\|\De_n^\om g\|_{H^q}^p}{M_n^\g}.
    \end{equation*}
\end{corollary}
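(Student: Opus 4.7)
The approach is to reprove Theorem~\ref{th:dec1}(i) directly for the weight $\om_\g(r)=(1-r)^\g\om(r)$ in place of $\om$, while retaining the $\om$-blocks $\Delta_n^\om$. The first observation is that $\om_\g\in\R$ whenever $\om\in\R$ and $\g\ge 0$: integration by parts (or Lemma~\ref{le:condinte}(i)) yields $\widehat{\om_\g}(r)\asymp(1-r)^\g\widehat{\om}(r)$, hence $\psi_{\om_\g}(r)\asymp 1-r$. Together with Lemma~\ref{le:condinte}(iv) and Lemma~\ref{le:Mncomparable} this supplies the moment identity
\begin{equation*}
\int_0^1 r^{M_n}\om_\g(r)\,dr\asymp\widehat{\om_\g}(r_n)\asymp M_n^{-\g}\,2^{-n},
\end{equation*}
which accounts for the appearance of $2^{-n}/M_n^\g$ on the right-hand side. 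The failure of the normalization $\int_0^1\om_\g\,dr=1$ only affects constants.

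For the lower bound, boundedness of the M.~Riesz projection on $H^q$ (using $1<q<\infty$) combined with \eqref{Eq-MaPa} gives
\begin{equation*}
M_q(r,g)\gtrsim M_q(r,\Delta_n^\om g)\gtrsim r^{M_{n+1}}\|\Delta_n^\om g\|_{H^q},
\end{equation*}
and Lemma~\ref{le:Mncomparable} provides $r_{n+1}^{M_{n+1}}\asymp 1$. Integrating the $p$th power on $[r_{n+1},r_{n+2}]$ against $(1-r)^\g\om(r)$, using $(1-r)\asymp M_n^{-1}$ there and $\int_{r_{n+1}}^{r_{n+2}}\om\,dr\asymp 2^{-n}$, one obtains the estimate $\|g\|^p_{H(q,p,\om_\g)}\gtrsim\sum_n 2^{-n}M_n^{-\g}\|\Delta_n^\om g\|_{H^q}^p$ after summing in $n$.

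For the upper bound, Minkowski's inequality and \eqref{Eq-MaPa} yield $M_q(r,g)\le\sum_n r^{M_n}a_n$ with $a_n=\|\Delta_n^\om g\|_{H^q}$, so it suffices to prove
\begin{equation*}
\int_0^1\Bigl(\sum_{n\ge 0}a_n r^{M_n}\Bigr)^p\om_\g(r)\,dr\lesssim\sum_n 2^{-n}M_n^{-\g}a_n^p.
\end{equation*}
For $0<p\le 1$ this is immediate from $(\sum)^p\le\sum(\cdot)^p$ and the moment identity. For $p>1$ one cannot simply apply Proposition~\ref{pr:DecompApw} to $\om_\g$, since that proposition would use $\om_\g$-blocks rather than $\om$-blocks; instead, its proof must be replayed. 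Choosing $\gamma'\in(0,1/(p-1))$ and applying H\"older,
\begin{equation*}
\Bigl(\sum_n a_n r^{M_n}\Bigr)^p\le \eta_{\gamma'}(r)^{p-1}\sum_n 2^{-n\gamma'(p-1)}a_n^p\, r^{M_n},
\end{equation*}
with $\eta_{\gamma'}(r)^{p-1}\lesssim\widehat{\om}(r)^{-\gamma'(p-1)}$ by Lemma~\ref{le:serie}(i). The main obstacle is then the mixed moment estimate
\begin{equation*}
\int_0^1 r^{M_n}\widehat{\om}(r)^{-\gamma'(p-1)}\om_\g(r)\,dr\lesssim 2^{n\gamma'(p-1)}\cdot 2^{-n}M_n^{-\g},
\end{equation*}
which I would establish by splitting at $r_n$: on $[0,r_n]$ use $\widehat{\om}(r)^{-\gamma'(p-1)}\le 2^{n\gamma'(p-1)}$ together with the moment identity, while on $[r_n,1]$ bound $(1-r)^\g\le(1-r_n)^\g\asymp M_n^{-\g}$ and compute $\int_{r_n}^1\widehat{\om}^{-\gamma'(p-1)}\om\,dr=\widehat{\om}(r_n)^{1-\gamma'(p-1)}/(1-\gamma'(p-1))\asymp 2^{-n(1-\gamma'(p-1))}$. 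Summing in $n$ after absorbing the H\"older prefactor $2^{-n\gamma'(p-1)}$ then delivers the desired upper bound.
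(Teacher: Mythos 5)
Your proposal is correct and follows essentially the same route as the paper: both the lower bound (M.~Riesz projection, \eqref{Eq-MaPa}, Lemma~\ref{le:Mncomparable}) and the upper bound (replaying Proposition~\ref{pr:DecompApw} with the extra factor $(1-r)^\g$ while keeping the $\om$-blocks, via the H\"older step \eqref{10.} and Lemma~\ref{le:serie}(i)) coincide with the paper's argument. The only difference is cosmetic: you handle the mixed moment $\int_0^1 r^{M_n}\widehat{\om}(r)^{-\gamma'(p-1)}\om_\g(r)\,dr$ by splitting at $r_n$ and computing $\int_{r_n}^1\widehat{\om}^{-\gamma'(p-1)}\om\,dr$ exactly, whereas the paper localizes to $[1-\frac1{M_n},1]$ using regularity of $(1-r)^\g\om\widehat{\om}^{-\b}$ (estimates \eqref{11.} and \eqref{61}); both are valid.
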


\begin{proof}
The inequality
    \begin{equation*}
    \begin{split}
    \int_0^1M_q^p(r,g)(1-r)^\g\om(r)\,dr
    \gtrsim\sum_{n=0}^\infty\frac{2^{-n}}{M_{n}^\g}\|\De_{n}g\|_{H^q}^p
    \end{split}
    \end{equation*}
follows by the M.~Riesz projection theorem, Lemma~\ref{le:4} and
Lemma~\ref{le:Mncomparable}.

On the other hand, by Lemma \ref{le:condinte}(iii) the weight
$\widetilde{\om}_\b(r)=\frac{\om(r)}{\widehat{\om}(r)^\b}$ is
regular for each $\b\in(0,1)$ and then $(1-r)^\g\widetilde{\om}_\b(r)$ is also regular. Therefore
Lemma~\ref{le:condinte}(iv) yields
    \begin{equation}\label{11.}
    \int_0^1r^n(1-r)^\g\widetilde{\om}_\b(r)\,dr
    \asymp\int_{1-\frac1n}^1r^n(1-r)^\g\widetilde{\om}_\b(r)\,dr
    \le\frac1{n^\g}\int_{1-\frac1n}^1r^n\widetilde{\om}_\b(r)\,dr.
    \end{equation}
By \eqref{1}, \eqref{10.}, Lemma~\ref{le:serie}(i), \eqref{11.}
with $\b=\eta(p-1)\in(0,1)$, \eqref{61} and
Lemma~\ref{le:Mncomparable},
    \begin{equation*}
    \begin{split}
    &\int_0^1M_q^p(r,g)(1-r)^\g\om(r)\,dr
    \le\int_0^1\left(\sum_{n=0}^\infty
    r^{M_n}\|\De^\om_ng\|_{H^q}\right)^p(1-r)^\g\om(r)\,dr\\
    &\quad\le\sum_{n=0}^\infty2^{-n\eta(p-1)}\|\De_n^\om
    g\|_{H^q}^p\int_0^1\frac{\om(r)}{\widehat{\om}(r)^{\eta(p-1)}}(1-r)^\g
    r^{M_n}\,dr\\
    &\quad\lesssim\sum_{n=0}^\infty2^{-n\eta(p-1)}\|\De_n^\om
    g\|_{H^q}^p\frac{1}{M_n^\g}\int_{1-\frac1{M_n}}^1\frac{\om(r)r^{M_n}}{\widehat{\om}(r)^{\eta(p-1)}}\,dr\\
    &\quad\lesssim\sum_{n=0}^\infty\|\De_n^\om
    g\|_{H^q}^p\frac{1}{M_n^\g}\int_{1-\frac1{M_n}}^1r^{M_n}\om(r)\,dr
    \asymp\sum_{n=0}^\infty2^{-n}\|\De_n^\om
    g\|_{H^q}^p\frac{1}{M_n^\g},
    \end{split}
    \end{equation*}
and the proof is complete.
\end{proof}

The second result generalizes a known characterization of the mean
Lipschitz space $\Lambda(p,\a)$, where $1<p<\infty$ and $0<\a \le
1$, see \cite[Theorem~2.1-3.1]{MatelPavstu}. We say that
$g\in\lambda(p,\a,\widehat{\om}^\b)$, if
    $$
    \lim_{r\to1^-}\frac{M_p(r,g')(1-r)^{1-\a} }{\ho(r)^\b}=0.
    $$

\begin{corollary}\label{Lemma:q>p}
Let $1<q,p<\infty$, $\eta\in\left[0,\frac{1}{p}\right)$,
$\om\in\R\cap\Mp$ such that $\int_0^1\om(r)\,dr=1$ and
$g\in\H(\D)$.
\begin{itemize}
\item[\rm(i)]
$g\in\Lambda\left(q,\frac{1}{p},\widehat{\om}^\eta\right)$ if and
only if $\displaystyle \|\Denw g'\|_{H^q}\lesssim
M_n^{1-\frac1p}2^{-n\eta}$ for all $n\in\N$. Moreover,
    $$
    \|g\|_{\Lambda\left(q,\frac{1}{p},\widehat{\om}^\eta\right)}\asymp |g(0)|+\sup_n\frac{\|\Denw g'\|_{H^q}2^{n\eta}}{M_n^{1-\frac1p}}.$$
    \item[\rm(ii)]$g\in\lambda(q,\frac{1}{p},\widehat{\om}^\eta)$ if and only if
    $$
    \displaystyle
    \|\Denw g'\|_{H^q}=\op\left(
    M_n^{1-\frac1p}2^{-n\eta}\right),\quad n\to\infty.
    $$
\end{itemize}
\end{corollary}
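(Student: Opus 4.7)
The plan is to reduce both statements to the decomposition machinery already established in Theorem~\ref{th:dec1}(ii), adapted to accommodate the extra factor $(1-r)^{1-1/p}$ in the norm of $\Lambda(q,\tfrac{1}{p},\widehat{\om}^\eta)$. The key observations are that $g\in\Lambda(q,\tfrac{1}{p},\widehat{\om}^\eta)$ means exactly that $M_q(r,g')(1-r)^{1-1/p}\widehat{\om}(r)^{-\eta}$ is bounded, that $\widehat{\om}(r_n)=2^{-n}$, and by Lemma~\ref{le:Mncomparable} that $1-r_n\asymp 1/M_n$, so that evaluating at $r=r_{n+1}$ converts the $\Lambda$-norm into the sequence $M_n^{1-1/p}2^{-n\eta}$. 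The critical analytic ingredient on the sufficiency side is Lemma~\ref{le:serie}(iii), whose hypothesis $p\eta<1$ is precisely the assumption $\eta<1/p$.

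\emph{Necessity in (i)}. Given $g\in\Lambda(q,\tfrac{1}{p},\widehat{\om}^\eta)$, I first apply Lemma~\ref{le:4} to get $\|\Denw g'\|_{H^q}\asymp M_q(r_{n+1},\Denw g')$, then invoke the M.~Riesz projection theorem on the dilate $g'_{r_{n+1}}$ (boundedness of the block projection onto $I_{\om,1}(n)$ on $H^q$ for $1<q<\infty$) to deduce $M_q(r_{n+1},\Denw g')\lesssim M_q(r_{n+1},g')$. The definition of the $\Lambda$-norm together with Lemma~\ref{le:Mncomparable} and $\widehat{\om}(r_{n+1})=2^{-(n+1)}$ yields
\[
\|\Denw g'\|_{H^q}\lesssim M_q(r_{n+1},g')\lesssim \|g\|_{\Lambda(q,\frac1p,\widehat{\om}^\eta)}\cdot M_n^{1-1/p}2^{-n\eta}.
\]

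\emph{Sufficiency in (i)}. Assuming $C:=\sup_n \|\Denw g'\|_{H^q}M_n^{-(1-1/p)}2^{n\eta}<\infty$, Minkowski's inequality and~\eqref{Eq-MaPa} give
\[
M_q(r,g')\le\sum_{n=0}^\infty M_q(r,\Denw g')\le\sum_{n=0}^\infty r^{M_n}\|\Denw g'\|_{H^q}\le C\sum_{n=0}^\infty M_n^{1-1/p}2^{-n\eta}r^{M_n},
\]
and Lemma~\ref{le:serie}(iii), applicable because $\eta<1/p$ and $\om\in\R\cap\Mp$, dominates the last sum by $\widehat{\om}(r)^\eta/(1-r)^{1-1/p}$. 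Rearranging yields $\|g-g(0)\|_{\Lambda(q,\frac1p,\widehat{\om}^\eta)}\lesssim C$. Combining this with the necessity direction establishes the equivalence of norms stated in (i).

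\emph{Part (ii)}. The necessity is identical to that of (i), with the bound $M_q(r_{n+1},g')=o\bigl(\widehat{\om}(r_{n+1})^\eta/(1-r_{n+1})^{1-1/p}\bigr)$ transferring directly to $\|\Denw g'\|_{H^q}=\op(M_n^{1-1/p}2^{-n\eta})$. For sufficiency, fix $\e>0$ and split the telescoping sum from the proof of (i) at an index $N$ chosen so that $\|\Denw g'\|_{H^q}<\e M_n^{1-1/p}2^{-n\eta}$ for $n>N$. The tail is bounded by $\e\,\widehat{\om}(r)^\eta/(1-r)^{1-1/p}$ via Lemma~\ref{le:serie}(iii), whereas the head is a finite (hence $r$-independent) constant $C_N$, and one needs $(1-r)^{1-1/p}/\widehat{\om}(r)^\eta\to 0$ as $r\to 1^-$ to kill it. The main obstacle — and the only nonroutine point — is verifying this decay: since $\om\in\Mp$, Lemma~\ref{Lemma:u_p} gives $\widehat{\om}^{-1/(p-1)}\in\R$, so $\int_r^1\widehat{\om}(s)^{-1/(p-1)}\,ds\asymp (1-r)\widehat{\om}(r)^{-1/(p-1)}$ tends to $0$; equivalently, for every $\e'>0$ one has $\widehat{\om}(r)\ge(1-r)^{p-1}/(\e')^{p-1}$ near $1$, whence $(1-r)^{1-1/p}/\widehat{\om}(r)^\eta\le (\e')^{\eta(p-1)}(1-r)^{1-1/p-\eta(p-1)}$, and the exponent $1-1/p-\eta(p-1)$ is positive precisely because $\eta<1/p$. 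This completes the sketch.
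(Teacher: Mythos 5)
Your argument is exactly the route the paper intends: it states that Corollary~\ref{Lemma:q>p} follows by the lines of the proof of Theorem~\ref{th:dec1}(ii) combined with Lemma~\ref{le:serie}(iii), which is precisely your scheme (M.~Riesz plus \eqref{Eq-MaPa} and Lemma~\ref{le:Mncomparable} for necessity, Minkowski plus Lemma~\ref{le:serie}(iii) for sufficiency). The details you supply for part (ii), in particular the verification via Lemma~\ref{Lemma:u_p} that $(1-r)^{1-\frac1p}\widehat{\om}(r)^{-\eta}\to0$ using $\eta<\frac1p$, are correct and fill in exactly what the paper omits.
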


Corollary~\ref{Lemma:q>p} can be obtained by following the lines
of the proof of Theorem~\ref{th:dec1}~(ii) together with
Lemma~\ref{le:serie}~(iii). We omit the details.

Finally, we will give simple proofs of several known results as a
by-product of Theorem~\ref{th:dec1}.

\begin{lettercorollary}\label{co:prevdecom}
Let $1<p<\infty$, $0<\a<\infty$ and $f(z)=\sum_{k=0}^\infty a_k
z^k\in\H(\D)$.
    \begin{itemize}
    \item[\rm(i)] If $0<\g<\infty$, then
    $$
    \int_0^1 M_p^q(r,f)(1-r)^{q\g-1}\,dr
    \asymp |a_0|^q+\sum_{n=0}^\infty2^{-nq\g}\left\Vert \sum_{k=2^n}^{2^{n+1}-1}a_k z^k\right\Vert^q_{H^p}.
    $$
    \item[\rm(ii)] If $1/q<\beta<\infty$, then
    \begin{equation*}
    \begin{split}
    &\int_0^1M_p^q(r,f)\left(\log\frac{2}{1-r}\right)^{-q\b}(1-r)^{-1}\,dr\\
    &\asymp\left\Vert \sum_{k=0}^{3}a_kz^k\right\Vert^q_{H^p}+\sum_{n=1}^\infty 2^{-n(q\b-1)}
    \left\Vert\sum_{k=2^{2^n}}^{2^{2^{n+1}}-1}a_k z^k\right\Vert^q_{H^p}.
    \end{split}
    \end{equation*}
    \end{itemize}
\end{lettercorollary}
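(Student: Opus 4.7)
The plan is to derive both parts of the corollary as immediate specializations of Theorem~\ref{th:dec1}(i), by choosing the weight $\om$ and the parameter $\a$ so that the abstract blocks $I_{\om,\a}(n)$ align with the explicit blocks appearing in each statement.

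For part~(i), I would take $\om(r) = q\g(1-r)^{q\g-1}$, a standard weight in $\R$ already normalized so that $\int_0^1\om(r)\,dr = 1$. A direct computation gives $\ho(r) = (1-r)^{q\g}$, and with the choice $\a = q\g$ the defining identity $\ho(r_n) = 2^{-n\a}$ yields $r_n = 1 - 2^{-n}$ and $M_n = 2^n$, so $I_{\om,\a}(n) = [2^n, 2^{n+1})$ exactly (with $I_{\om,\a}(0) = \{0,1\}$). Theorem~\ref{th:dec1}(i) then supplies the asymptotic, and the isolated $|a_0|^q$ term arises from splitting $\|a_0 + a_1 z\|_{H^p}^q \asymp |a_0|^q + |a_1|^q$ in the $n=0$ block.

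For part~(ii), I would take $\om(r) = (q\b-1)(\log 2)^{q\b-1}\bigl(\log(2/(1-r))\bigr)^{-q\b}(1-r)^{-1}$, which is normalized so that $\int_0^1\om\,dr = 1$. The hypothesis $q\b > 1$ ensures the integral converges, and a brief computation of $\psi_\om(r)/(1-r) \asymp \log(2/(1-r)) \to \infty$ places $\om$ in~$\I$. One finds $\ho(r) = \bigl(\log(2/(1-r))/\log 2\bigr)^{1-q\b}$, and selecting $\a = q\b - 1$ makes the identity $\ho(r_n) = 2^{-n\a}$ produce $\log(2/(1-r_n)) = 2^n\log 2$, so $1 - r_n = 2^{1-2^n}$ and $M_n = 2^{2^n-1}$ for $n \ge 1$. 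Hence the abstract blocks $I_{\om,\a}(n) = [2^{2^n-1}, 2^{2^{n+1}-1})$ share the double-exponential growth rate of $[2^{2^n}, 2^{2^{n+1}})$ appearing in the claim.

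The main technical obstacle lies in reconciling these two partitions in part~(ii): they do not coincide, so the abstract sum produced by Theorem~\ref{th:dec1}(i) must be shown equivalent to the explicit sum in the statement. Since each block of either partition meets at most two blocks of the other, I would re-group the sub-series via the M.~Riesz projection theorem (available because $1 < p < \infty$), which shows that the $H^p$-norm of any Fourier-block sub-projection of a given block is comparable to that of the block itself. The bounded overlap is absorbed into the constants of $\asymp$, and the normalizing factor $(q\b-1)(\log 2)^{q\b-1}$ appearing in $\om$ is likewise absorbed, completing the proof of~(ii). Part~(i) requires no such re-grouping, as the blocks match exactly.
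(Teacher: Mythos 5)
Your proposal takes essentially the same route as the paper: the paper's proof consists precisely of specializing Theorem~\ref{th:dec1} to the normalized weights $q\g(1-r)^{q\g-1}$ with $\a=q\g$ for (i) and the normalized logarithmic rapidly increasing weight with $\a=q\b-1$ for (ii), exactly as you do (your weight for (ii) is a comparable normalization of the paper's). Your explicit regrouping in (ii) correctly supplies a detail the paper leaves implicit (its own choice of weight also yields blocks of the form $[2^{2^n-1},2^{2^{n+1}-1})$ rather than $[2^{2^n},2^{2^{n+1}})$); only note that the M.~Riesz theorem gives that a sub-block projection's $H^p$-norm is \emph{dominated by} a constant times the block's norm, not comparable to it, which is all the bounded-overlap argument requires.
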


\begin{proof}
(i) Consider the regular and normalized weight
$\om(r)=q\g(1-r)^{q\g-1}$. Then, by choosing $\a=q\g$ in
\eqref{rn} and Theorem~\ref{th:dec1}, the result follows.

(ii) Take the normalized rapidly increasing weight
    $$
    \om(r)=\frac{q\b-1}{\log2}\frac{1}{(1-r)\left(\log_2\frac1{1-r}\right)^{q\b}},\quad
    q\b>1.
    $$
Then, by choosing $\a=q\beta-1$ in \eqref{rn} and
Theorem~\ref{th:dec1}, the result follows.
\end{proof}

Corollary~\ref{co:prevdecom}(i) is obtained in~\cite{MatelPavstu}
as a consequence of a more general result.
Corollary~\ref{co:prevdecom}(ii) is nothing else but
\cite[Theorem~6.1]{GPP}, the original proof of which is more
involved and uses the Riesz-Thorin interpolation theorem.

\subsection{$\mathbf{\om}$-Lacunary series}

The main purpose of this section is to stress how different is a
weighted Bergman space $A^p_\om$, induced by a rapidly increasing
weight $\om$, from another $A^p_\om$, induced by a regular one.
This will be done by using strongly the results on power series
with positive coefficients obtained in
Section~\ref{SubSecL^p-behavior}. The reader is invited to
see~\cite{PelRat} for more information on this topic.

Recall that, for a given radial weight $\om$, $f\in\H(\D)$ is said
to be an $\om$-lacunary series in $\D$ if its Maclaurin series
$\sum_{k=0}^\infty a_kz^{n_k}$ satisfies
    \begin{equation}\label{wlacunary}
    \frac{\widehat{\om}(1-\frac{1}{n_{k}})}{\widehat{\om}(1-\frac{1}{n_{k+1}})}=\frac{\int_{1-\frac{1}{n_k}}^1\om(r)\,dr}{\int_{1-\frac{1}{n_{k+1}}}^1\om(r)\,dr}\ge\lambda>1
    ,\quad k\in\N\cup\{0\}.
    \end{equation}

We begin with proving an extension of Theorem~\ref{lacunary1} that
describes the $\om$-lacunary series in the mixed norm space
$H(p,q,\om)$ in terms of the coefficients in their Maclaurin
series.

\begin{theorem}\label{lacunary}
Let $0<q,\a<\infty$, $0<p\le\infty$ and $\om\in\I\cup\R$ such that
$\int_{0}^1\om(r)\,dr=1$, and let $f$ be an $\om$-lacunary series
in $\D$. Then the following conditions are equivalent:
    \begin{itemize}
    \item[\rm(i)] $f\in H(p,q,\om)$; \item[\rm(ii)]
    $\displaystyle\sum_{n=0}^\infty 2^{-n\a}\left(\sum_{n_k\in
    I_{\om,\a}(n)}|a_k|^2\right)^{q/2}<\infty$; \item[\rm(iii)]
    $\displaystyle\sum_{n=0}^\infty 2^{-n\a}\left(\sum_{n_k\in
    I_{\om,\a}(n)}|a_k|^q\right)<\infty$;
    \item[\rm(iv)]
    $\displaystyle\sum_{n=0}^\infty 2^{-n\a}\left(\sum_{n_k\in
    I_{\om,\a}(n)}|a_k|\right)^{q}<\infty$;
    \item[\rm(v)]
    $\displaystyle\sum_{k=0}^\infty
    |a_k|^q\int_0^1r^{2n_k+1}\om(r)\,dr<\infty$.
    \end{itemize}
Moreover, each of the sums in {\rm{(ii)-(v)}} is comparable to
$\|f\|^q_{H(p,q,\om)}$.
\end{theorem}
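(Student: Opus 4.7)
\emph{Proof proposal.} The argument rests on two ingredients: the decomposition results of Section~\ref{decomposition}, together with the crucial combinatorial observation that the $\om$-lacunary hypothesis forces each block $I_{\om,\a}(n)$ to contain at most a uniformly bounded number of indices from $\{n_k\}$.

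\emph{Step 1 (bounded cardinality per block).} I would first establish the existence of $N_0=N_0(\lambda,\a)$ such that $\#\{k:n_k\in I_{\om,\a}(n)\}\le N_0$ for every $n\in\N\cup\{0\}$. Suppose $n_k,\,n_{k+j}\in I_{\om,\a}(n)=[M_n,M_{n+1})$. The definition $M_n=E\bigl(1/(1-r_n)\bigr)$ yields $r_n\asymp 1-1/M_n$ for large $M_n$, so both $1-1/n_k$ and $1-1/n_{k+j}$ lie (up to a controlled perturbation) in $[r_n,r_{n+1}]$. Since $\widehat{\om}$ is decreasing and $\widehat{\om}(r_n)=2^{-n\a}$, this forces $\widehat{\om}(1-1/n_k)$ and $\widehat{\om}(1-1/n_{k+j})$ to both lie in $[2^{-(n+1)\a},2^{-n\a}]$. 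Iterating~\eqref{wlacunary} gives $\widehat{\om}(1-1/n_k)/\widehat{\om}(1-1/n_{k+j})\ge\lambda^j$, whence $\lambda^j\le 2^\a$ and $j\le\a\log 2/\log\lambda$, as desired.

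\emph{Step 2 (equivalences among (ii)--(v)).} The equivalence (iii)$\Leftrightarrow$(v) follows from Lemma~\ref{le:condinte}(iv), which gives $\int_0^1 r^{2n_k+1}\om(r)\,dr\asymp\widehat{\om}(1-1/n_k)\asymp 2^{-n\a}$ for every $n_k\in I_{\om,\a}(n)$; substituting into (v) and regrouping by $n$ produces (iii). The equivalences (ii)$\Leftrightarrow$(iii)$\Leftrightarrow$(iv) follow at once from Step~1, because on any set of cardinality at most $N_0$ all $\ell^s$-type functionals of a finite sequence are mutually comparable with constants depending only on $N_0$.

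\emph{Step 3 (equivalence (i)$\Leftrightarrow$(ii)).} For $1<p<\infty$, Theorem~\ref{th:dec1}(i) gives
\[
\|f\|_{H(p,q,\om)}^q\asymp\sum_{n=0}^\infty 2^{-n\a}\|\Delta_n^{\om,\a}f\|_{H^p}^q.
\]
Because $\om$-lacunarity implies classical Hadamard lacunarity (by Lemma~\ref{le:condinte}(i)--(ii)), every block polynomial $\Delta_n^{\om,\a}f$ is Hadamard-lacunary with at most $N_0$ nonzero terms, and Zygmund's classical theorem yields $\|\Delta_n^{\om,\a}f\|_{H^p}\asymp\bigl(\sum_{n_k\in I_{\om,\a}(n)}|a_k|^2\bigr)^{1/2}$, which is~(ii). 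The remaining ranges reduce to (ii) via Proposition~\ref{pr:DecompApw} applied to an appropriate positive-coefficient power series: for $0<p<\infty$ one first uses Zygmund to replace $M_p(r,f)$ by $M_2(r,f)=\bigl(\sum_k|a_k|^2r^{2n_k}\bigr)^{1/2}$ and applies Proposition~\ref{pr:DecompApw} with exponent $q/2$ (after accounting for the change from exponents $n_k$ to $2n_k$, which amounts to a bounded index shift of blocks by Lemma~\ref{le:Mncomparable}); for $p=\infty$ one invokes Sidon's theorem $M_\infty(r,f)\asymp\sum_k|a_k|r^{n_k}$ and applies Proposition~\ref{pr:DecompApw} with exponent $q$, producing (iv) directly. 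The main obstacle is clearly Step~1, whose uniform treatment of both classes $\R$ and $\I$ is the structural heart of the theorem; once this bounded-cardinality statement is secured, the rest amounts to standard lacunary-series estimates combined with the decomposition machinery already built in this section.
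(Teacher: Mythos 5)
Your proposal is correct and follows essentially the same route as the paper's proof: the bounded-cardinality-per-block observation coming from \eqref{rn} and \eqref{wlacunary}, the moment estimate via Lemma~\ref{le:condinte}(iv) for (iii)$\Leftrightarrow$(v), and Zygmund's theorem combined with Proposition~\ref{pr:DecompApw} for (i)$\Leftrightarrow$(ii). The only cosmetic differences are that the paper treats $p=\infty$ via the containment $H(\infty,q,\om)\subset H(p,q,\om)$ together with the trivial bound $M_\infty(r,f)\le\sum_k|a_k|r^{n_k}$ rather than a two-sided Sidon estimate, and it reduces $\om$-lacunarity to Hadamard lacunarity by the proof of \cite[Lemma~1.2]{PelRat} (which also covers $\om\in\I$, where Lemma~\ref{le:condinte}(ii) alone gives the gap condition only up to a multiplicative constant).
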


\begin{proof} Let $f$ be an $\om$-lacunary series
in $\D$. First, we observe that the chain of inequalities
    $$
    \frac{1}{1-r_n}\le n_k<n_{k+s}<\frac{1}{1-r_{n+1}}
    $$
is equivalent to
    \begin{equation}\label{4.}
    \begin{split}
    \frac{1}{2^{n\a}}&=\widehat{\omega}\left(r_n\right)
    \ge\widehat{\omega}\left(1-\frac{1}{n_k}\right)>\widehat{\omega}\left(1-\frac{1}{n_{k+s}}\right)
    >\widehat{\omega}\left(r_{n+1}\right)=\frac{1}{2^{(n+1)\a}}
    \end{split}
    \end{equation}
by \eqref{rn}. This together with \eqref{wlacunary} shows that
there are at most $\log_{\lambda}2^\a+2$ integers $n_k$ in each
set $I_{\om,\a}(n)$. Therefore H\"older's inequality and standard estimates
give
    \begin{equation*}
    \begin{split}
    \sum_{n=0}^\infty 2^{-n\a}\left(\sum_{k\in I_{\om,\a}(n)}|a_k|^2\right)^{q/2}
    &\asymp\sum_{n=0}^\infty 2^{-n\a}\left(\sum_{k\in
    I_{\om,\a}(n)}|a_k|^q\right)\\
    &\asymp\sum_{n=0}^\infty 2^{-n\a}\left(\sum_{n_k\in
    I_{\om,\a}(n)}|a_k|\right)^{q},
    \end{split}
    \end{equation*}
and thus (ii)$\Leftrightarrow$(iii)$\Leftrightarrow$(iv).
Moreover, by Lemma~\ref{le:condinte}(i) (ii) (iv),
    $$
    \widehat{\omega}\left(1-\frac{1}{n_k}\right)\asymp\widehat{\omega}\left(1-\frac{1}{2n_k+1}\right)\asymp\int_0^1
    r^{2n_k+1}\om(r)\,dr,
    $$
and it follows by \eqref{4.} that (iii)$\Leftrightarrow$(v).

By the proof of \cite[Lemma~1.2]{PelRat}, there exist
$\b=\beta(\om)>0$ and $N\in\N$ such that
    $$
    \frac{\widehat{\omega}\left(1-\frac{1}{n_k}\right)}{\widehat{\omega}\left(1-\frac{1}{n_{k+1}}\right)}\le \left(\frac{n_{k+1}}{n_k}\right)^\b
    $$
for all $k\ge N$. Therefore $f$ is a standard lacunary series by
\eqref{wlacunary}. In fact, Lemma~\ref{le:condinte}(i) shows that
an $\om$-lacunary series for $\om\in\R$ is just a standard
lacunary series. Consequently, if $0<p<\infty$, Zygmund's theorem
\cite[p.~215]{Zygmund59} gives
    \begin{equation*}
    \|f\|_{H(p,q,\om)}^q\asymp\int_0^{1}\left(\sum_{k=0}^\infty
    |a_k|^2r^{2n_k}\right)^{\frac{q}{2}}\omega(r)r\,dr.
    \end{equation*}
Therefore Proposition~\ref{pr:DecompApw} implies
(i)$\Leftrightarrow$(ii) and
    $$
    \|f\|^q_{H(p,q,\om)}\asymp\sum_{n=0}^\infty 2^{-n\a}\left(\sum_{n_k\in
    I_{\om,\a}(n)}|a_k|^2\right)^{q/2}.
    $$
This completes the proof for $0<p<\infty$.

Finally, if $f\in H(\infty,q,\om)$, then $f\in H(p,q,\om)$ for any
$0<p<\infty$, so by the previous argument (i)$\Rightarrow$(ii).
Reciprocally, assume that (iv) holds. Then, by using
Proposition~\ref{pr:DecompApw}, we deduce
    \begin{equation*}
    \|f\|_{H(\infty,q,\om)}^q\le \int_0^{1}\left(\sum_{k=0}^\infty
    |a_k|r^{n_k}\right)^{q}\omega(r)r\,dr\asymp \sum_{n=0}^\infty 2^{-n\a}\left(\sum_{n_k\in
    I_{\om,\a}(n)}|a_k|\right)^{q}<\infty.
    \end{equation*}
This finishes the proof.
\end{proof}

Theorem~\ref{lacunary} gives an easy way to construct functions in
$A^p_\om$. For example, if $0<p<q<\infty$ and $\om\in\I\cup\R$,
then Theorem \ref{lacunary}, with $\alpha=1$, shows that
    $$
    f(z)=\sum_{n=0}^\infty
    2^{n/q}z^{M_n},\quad M_n=E\left(\frac1{1-r_n}\right),
    $$
where $r_n$ is given by \eqref{rn} with $\a=1$, belongs to
$A^p_\om\setminus A^q_\om$.

It is worth noticing that the equivalence (i)$\Leftrightarrow$(ii)
in Theorem~\ref{lacunary} is valid for standard lacunary series
and $\om\in\I\cup\R$. However, (i)$\Leftrightarrow$(iii) is no
longer true for standard lacunary series if $\om\in\I$ and $q\neq 2$. Namely,
let us consider the rapidly increasing weight
    $$
    v_\beta(r)= (1-r)^{-1}\left(\log\frac{e}{1-r}\right)^{-\beta},\quad\beta>1.
    $$
If (i) and (iii) were equivalent, then the choice $\alpha=\beta
-1$ would imply that a standard lacunary series
$f(z)=\sum_{n=0}^\infty a_nz^{2^n}$ belongs to $A^q_{v_\b}$ if and
only if
    $$
    \sum_{n=0}^\infty 2^{-n(\beta-1)}\sum_{k=2^{2^n}}^{2^{2^{n+1}}-1}|a_k|^q\asymp \sum_{k=4}^\infty |a_k|^q \left(\log k\right)^{-\b+1}<\infty.
    $$
But this is impossible. Namely, if $\beta>2$ and $a_k=k^{-1/p}$,
then we would have $f\in A^q_{v_\b}$ for $q\ge p$, but $f\notin
A^q_{v_\b}$ for $q<p$. A similar reasoning also works for
$1<\b<2$. An analogous argument can be used to show that the
condition (v) does not characterize standard lacunary series in
$A^p_\om$ when $\om\in\I$ and $q\neq 2$.

If $\om\in\I$, then \eqref{wlacunary} says, roughly speaking, that
the smaller the space $A^p_\om$ is, the larger the gaps of an
$\om$-lacunary series are. Namely, the condition (iii) in
Theorem~\ref{lacunary} is equivalent to (i) and (ii) when the
series $\sum a_kz^{n_k}$ has very large gaps depending on $\om$.

The next result offers a description of $\om$-lacunary series in
the mixed norm space $H(p,\infty,\ho^\b)$.

\begin{theorem}\label{th:lacunaryinfinity}
Let $0<\beta<\infty$ and $\om\in\I\cup\R$ such that $\int_{0}^1\om(r)\,dr=1$. Let
$f(z)=\sum_{n=0}^\infty a_kz^{n_k}$ be an $\om$-lacunary series in
$\D$. Then the following assertions are equivalent:
 \begin{itemize}
    \item[\rm(i)]\, $f\in H(\infty,\infty,\ho^\b)$;
    \item[\rm(ii)]\, $f\in H(p,\infty,\ho^\b)$ for some
    $0<p\le\infty$;
    \item[\rm(iii)]\, The coefficients $\{a_k\}$ of the Maclaurin series
    of $f$ satisfy
    \begin{equation}\label{3}
    |a_k|\lesssim\left(\int_{0}^1r^{n_k}\om(s)\,ds\right)^{-\beta},\quad
    k\in\N\cup\{0\}.
    \end{equation}
     \end{itemize}
\end{theorem}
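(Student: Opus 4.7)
The strategy is to close the circle $\text{(i)}\Rightarrow\text{(ii)}\Rightarrow\text{(iii)}\Rightarrow\text{(i)}$ with $\a=1$ in \eqref{rn}, so that the blocks $I_{\om,1}(n)$ and the integers $M_n$ are those already used throughout Section~\ref{decomposition}. The implication $\text{(i)}\Rightarrow\text{(ii)}$ is immediate from $M_p(r,f)\le M_\infty(r,f)$. Both of the remaining implications rest on two ingredients already extracted during the proof of Theorem~\ref{lacunary}: first, each block $I_{\om,1}(n)$ contains at most $\log_\lambda 2+2$ of the indices $n_k$, and $\ho(1-\tfrac{1}{n_k})\asymp\ho(r_n)=2^{-n}$ whenever $n_k\in I_{\om,1}(n)$; second, an $\om$-lacunary series is automatically a Hadamard lacunary series. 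Combined with Lemma~\ref{le:condinte}(iv) these facts show that condition (iii) is equivalent to the block estimate $|a_k|\lesssim 2^{n\beta}$ for $n_k\in I_{\om,1}(n)$.

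For $\text{(iii)}\Rightarrow\text{(i)}$ I would use the block reformulation of (iii) and bound the modulus by the absolute series. Grouping the indices by blocks gives
\begin{equation*}
M_\infty(r,f)\le\sum_{k=0}^\infty|a_k|r^{n_k}
=\sum_{n=0}^\infty\sum_{n_k\in I_{\om,1}(n)}|a_k|r^{n_k}
\lesssim\sum_{n=0}^\infty 2^{n\beta}r^{M_n},
\end{equation*}
since each block has bounded cardinality and $r^{n_k}\le r^{M_n}$ when $n_k\in I_{\om,1}(n)$. Lemma~\ref{le:serie}(i), applied with $\a=1$ and $\gamma=\beta$, then bounds the last sum by $\ho(r)^{-\beta}$, which is (i).

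For $\text{(ii)}\Rightarrow\text{(iii)}$ I would distinguish two cases. If $p=\infty$ the hypothesis coincides with (i), and the single-coefficient bound $|a_k|r^{n_k}\le M_\infty(r,f)$ evaluated at $r=1-\tfrac{1}{n_k}$, where $r^{n_k}$ is bounded below, combines with Lemma~\ref{le:condinte}(iv) to yield (iii). If $0<p<\infty$, since the series is Hadamard lacunary, Zygmund's theorem \cite[p.~215]{Zygmund59} gives $M_p(r,f)\asymp\bigl(\sum_k|a_k|^2r^{2n_k}\bigr)^{1/2}=M_2(r,f)$, so the hypothesis transfers to $|a_k|r^{n_k}\le M_2(r,f)\lesssim\ho(r)^{-\beta}$; evaluating at $r=1-\tfrac{1}{n_k}$ once more and invoking Lemma~\ref{le:condinte}(iv) delivers (iii). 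The only slightly delicate step is this reduction to the $M_2$-norm through Zygmund's theorem, which ensures that $M_p$ controls a single coefficient for every $p$; the rest is routine bookkeeping with the ingredients assembled in Section~\ref{decomposition}.
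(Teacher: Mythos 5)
Your proposal is correct and takes essentially the same route as the paper: the paper likewise reduces (ii) to the $p=2$ case using that an $\om$-lacunary series is Hadamard lacunary together with the Cauchy coefficient estimate and Lemma~\ref{le:condinte}(iv), and proves (iii)$\Rightarrow$(i) by grouping the indices $n_k$ into blocks of bounded cardinality and invoking Lemma~\ref{le:serie}(i). The only cosmetic difference is the choice of parameters: the paper takes $\a=\frac1\b$ in \eqref{rn} and applies Lemma~\ref{le:serie}(i) with $\g=1$, whereas you keep $\a=1$ and take $\g=\b$; both choices yield the same bound $\sum_n 2^{n\b}r^{M_n}\lesssim\widehat{\om}(r)^{-\b}$.
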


\begin{proof}
The implication (i)$\Rightarrow$(ii) is trivial. Moreover, as each
$\om$-lacunary series is a standard lacunary series, $f\in
H(p,\infty,\ho^\b)$ if and only if $f\in H(2,\infty,\ho^\b)$.
Therefore Cauchy integral formula and Lemma~\ref{le:condinte}(iv)
easily give (ii)$\Rightarrow$(iii). To complete the proof, we will
establish (iii)$\Rightarrow$(i). If we choose $\a=\frac{1}{\b}$ in
\eqref{rn}, then Lemma~\ref{le:serie}(i) gives
    $$
   \sum_{n=1}^\infty2^n |z|^{M_n}\lesssim\widehat{\om}(|z|)^{-\beta},\quad z\in\D,
    $$
so it suffices to prove
    $
    \sum_{k=1}^\infty\frac{r^{n_k}}{\widehat{\om}\left(1-\frac1{n_k}\right)^{\beta}}
    \lesssim\sum_{n=1}^\infty2^nr^{M_n}.
    $
Bearing in mind Lemma~\ref{le:condinte}(i)-(ii) and arguing as in
the proof of Theorem~\ref{lacunary}, we deduce
    \begin{equation*}
    \begin{split}
    \sum_{k=1}^\infty\frac{r^{n_k}}{\widehat{\om}\left(1-\frac1{n_k}\right)^{\beta}}
    &=\sum_{n=1}^\infty\sum_{n_k\in
    I_{\om,\frac{1}{\beta}}(n)}\frac{r^{n_k}}{\widehat{\om}\left(1-\frac1{n_k}\right)^{\beta}}\\
    &\le\sum_{n=1}^\infty r^{M_n}\sum_{n_k\in
    I_{\om,\frac{1}{\beta}}(n)}\frac{1}{\widehat{\om}\left(1-\frac1{M_{n+1}-1}\right)^{\beta}}\\
    &\le(\log_\lambda 2^{1/\beta}+2)\sum_{n=1}^\infty
    \frac{r^{M_n}}{\widehat{\om}\left(r_{n+1}\right)^{\beta}},
    \end{split}
    \end{equation*}
which together with \eqref{rn} finishes the proof.
\end{proof}

Theorem~\ref{th:lacunaryinfinity} generalizes and improves know
results in the existing literature. In particular, by taking the
regular weight $\phi_\g(r)=\log^\g\frac{e}{1-r}$ and choosing
$\om$ such that $\phi_\g([0,1])\cdot\om(r)=\phi_\g(r)$, we deduce
that the lacunary series $f(z)=\sum_{n=0}^\infty a_kz^{n_k}$,
where $\frac{n_{k+1}}{n_k}\ge\lambda>1$, satisfies the Bloch-type
condition
    $$
    M_\infty(r,f')=\og\left(\frac{1}{(1-r)\log^\g\frac{e}{1-r}}\right),\quad
    0<\gamma<\infty,
    $$
if and only if
    \begin{equation*}
    |a_k|=\og\left(\left(\log n_k\right)^{-\g}\right),\quad
    k\in\N.
    \end{equation*}

\section{The role of the sublinear Hilbert operator}\label{sublinear}

The generalized Hilbert operator
    $$
    \mathcal{H}_g(f)(z)=\int_0^1f(t)g'(tz)\,dt
    $$
is well defined whenever
    \begin{equation}\label{wd}
    \int_0^1|f(t)|\,dt<\infty.
    \end{equation}
Further, if $f(z)= \sum_{n=0}^\infty a_nz^n\in \H(\D)$ satisfies
\eqref{wd}, then $\hg(f)$ can be  written in terms of the
coefficients of the Maclaurin series of $f$ and $g$. Namely, if
$g(z)=\sum_{n=0}^\infty b_nz^n\in \H (\D )$, then
    \begin{equation*}
    \begin{split}\label{Hgcoef}
    \mathcal{H}_g(f)(z)&=\sum_{k=0}^{\infty} \left( (k+1)b_{k+1}\int_0^1t^k f(t)\,dt\right)z^k\\
    &=\sum_{k=0}^\infty \left
    ((k+1)b_{k+1}\sum_{n=0}^\infty \frac{a_n}{n+k+1}\right)z^k.
    \end{split}
    \end{equation*}

We begin with noting that condition \eqref{99} implies \eqref{wd}
for any $f\in A^p_\om$. In fact, by using H\"{o}lder's inequality
and \eqref{le:minfty}, we deduce
    \begin{equation*}
    \begin{split}
    \int_0^1|f(t)|\,dt \le \left(\int_0^1
    |f(t)|^p\ho(t)\,dt\right)^{\frac{1}{p}}\left(\int_0^1\ho(t)^{-\frac{p'}{p}}\,dt\right)^{\frac{1}{p'}}
    \lesssim \|f\|^p_{A^p_\om}.
    \end{split}
    \end{equation*}
The standard radial weight $(1-|z|^2)^\alpha$ satisfies \eqref{99}
if and only if $-1<\alpha<p-2$. Moreover, the function
$h(z)=(1-z)^{-1}\left(\log\frac{e}{1-z}\right)^{-1}$ belongs to
$A^p_{p-2}$ for all $1<p<\infty$, but $\int_0^1|h(t)|\,dt=\infty$.
Therefore \eqref{99} is a natural sharp condition for both, the
generalized Hilbert operator $\hg$ and the sublinear Hilbert
operator
    $$
    \hti(f)(z)=\int_0^1\frac{|f(t)|}{1-tz}\,dt
    $$
to be well defined. As mentioned in \eqref{justificationmaximal}, the
operator $\hti$ behaves like a maximal operator with respect to
$\hg$ under appropriate hypotheses on $\om$ and $g$. Consequently, in
view of \eqref{le:minfty}, it is natural to study the boundedness
of $\hti$ on both $L^p_{\ho}$ and $A^p_\om$. This is the main aim
of this section.

\medskip

\noindent\emph{Proof of} Theorem~\ref{th:gorro}.
(i)$\Rightarrow$(iii). This part of the proof uses ideas from
\cite{Muckenhoupt1972}. For $r\in[0,1)$, set
$\phi_r(t)=\ho(t)^{-\frac{1}{p-1}}\chi_{[r,1)}(t)$, so that
$\phi_r\in L^p_{\ho}$ for all $r\in[0,1)$ by~\eqref{99}. Here, as
usual, $\chi_E$ stands for the characteristic function of the set
$E$. Then, bearing in mind \eqref{le:minfty}, we deduce
    \begin{equation*}
    \begin{split}
    \|\h(\phi_r)\|_{L^p_{\ho}}\lesssim \|\h(\phi_r)\|_{A^p_\om} \le
    \|\h\|_{\left(L^p_{\ho},A^p_\om\right)}\|\phi_r\|_{L^p_{\ho}},
    \end{split}
    \end{equation*}
and hence
    \begin{equation}\label{eq:j12}
    \int_{0}^1 \ho(s)\left(\int_r^1\frac{\ho^{-\frac{1}{p-1}}(t)}{1-ts}\,dt\right)^p\,ds \lesssim \int_r^1 \ho(t)^{-\frac{1}{p-1}}\,dt.
    \end{equation}
Since
    \begin{equation*}
    \begin{split}
    \int_{0}^r
    \ho(s)\left(\int_r^1\frac{\ho^{-\frac{1}{p-1}}(t)}{1-ts}\,dt\right)^p\,ds
    \ge \frac{1}{2^p}\left(\int_{0}^r \frac{\ho(s)}{(1-s)^p}\,ds\right)\left(\int_r^1 \ho(t)^{-\frac{1}{p-1}}\,dt\right)^p,
    \end{split}
    \end{equation*}
this together with \eqref{eq:j12} implies $\om\in \Mp$ and
    $$
    \Mp(\om)\lesssim \|\h\|_{\left(L^p_{\ho},A^p_\om\right)}.
    $$
This argument also proves (ii)$\Rightarrow$(iii).

(iii)$\Rightarrow$(i). Since $\om\in\R$ by the assumption, $\om$
is comparable to the differentiable weight
$\frac{\int_r^1\om(s)\,ds}{(1-s)}$, so, by using
\cite[Theorem~1.1]{PavP}, we deduce
    $$
    \|f\|_{A^p_\om}^p\asymp|f(0)|^p+\int_\D|f'(z)|^p(1-|z|)^{p}\,\om(z)\,dA(z),\quad
    f\in\H(\D).
    $$
 Now, for any $\phi\in L^p_{\ho}$,
    $$
    \left(\h(\phi)\right)'(z)=\int_0^1\frac{t\phi(t)}{(1-tz)^{2}}\,dt,
    $$
and so Minkowski's inequality in continuous form yields
    \begin{equation*}
    \begin{split}
    M_p(r,\left(\h(\phi)\right)')&=\left(\frac{1}{2\pi}\int_0^{2\pi}\left|\int_0^1\frac{\phi(t)t}{(1-tre^{i\t})^{2}}\,dt\right|^p\,d\t\right)^\frac1p\\
    &\le\int_0^1\phi(t)\left(\int_0^{2\pi}\frac{d\t}{|1-tre^{i\t}|^{2p}}\right)^\frac1p\,dt
    \asymp\int_0^1\frac{\phi(t)}{(1-tr)^{2-\frac1p}}\,dt,
    \end{split}
    \end{equation*}
and hence
    \begin{equation}
    \begin{split}\label{eq:j13}
    \|\h(\phi)\|^p_{A^p_\om}\lesssim I_1(r)+I_2(r)+|\H(\phi)(0)|^p
    \end{split}
    \end{equation}
where
    $$
    I_1(r)=\int_0^1\left(\int_0^r\frac{\phi(t)}{(1-t)^{2-\frac1p}}\,dt\right)^p(1-r)^{p}\om(r)\,dr
    $$
and
    $$
    I_2(r)=\int_0^1\left(\int_r^1\frac{\phi(t)}{(1-tr)^{2-\frac1p}}\,dt\right)^p(1-r)^{p} \om(r)\,dr.
    $$

We observe that
    \begin{equation}
    \begin{split}\label{eq:j14}
    I_1(r)\lesssim \|\phi\|^p_{L^p_{\ho}}
    \end{split}
    \end{equation}
can be written as
    $$
    \int_0^1\left(\int_0^r \Phi(t)\,dt\right)U^p(r)\,dr\le \int_0^1 \Phi^p(r)V^p(r)\,dr,
    $$
where
    $$
    U^{p}(x)=\left\{
        \begin{array}{cl}
        (1-x)^{p-1}\widehat{\om}(x), &   0\le x<1\\
        0, & x\ge1
        \end{array}\right.,
    $$
    $$
    V^{p}(x)=\left\{
        \begin{array}{cl}
        (1-x)^{2p-1}\widehat{\om}(x), &   0\le x<1\\
        0, & x\ge1
        \end{array}\right.,
    $$
and $\Phi(t)=\frac{\phi(t)}{(1-t)^{2-\frac1p}}$. Since $\ho$ is decreasing,
    \begin{equation*}
    \begin{split}
    &\left(\int_r^1 U^{p}(s)\,ds\right)^{\frac{1}{p}}\left(\int_0^r V^{-p'}(s)\,ds\right)^{\frac{1}{p'}}\\
    &=\left(\int_r^1 (1-s)^{p-1}\ho(s)\,ds\right)^{\frac{1}{p}}
    \left(\int_0^r\frac{1}{(1-s)^{\left(2-\frac{1}{p}\right)p'}\ho^{\frac{p'}{p}}(s)}\,ds\right)^{\frac{1}{p'}}\\
    &\le \ho(r)^{\frac{1}{p}}(1-r)\ho(r)^{-\frac{1}{p}}
    \left(\int_0^r\frac{1}{(1-s)^{\left(2-\frac{1}{p}\right)p'}}\,ds\right)^{\frac{1}{p'}}
    \le C,
    \end{split}
    \end{equation*}
for all $r\in[0,1)$. Now, \cite[Theorem~1]{Muckenhoupt1972} shows
that \eqref{eq:j14} holds. Moreover, since $\om\in\M_p$, by
applying \cite[Theorem~2]{Muckenhoupt1972} with
    $$
    U^{p}(x)=\left\{
        \begin{array}{cl}
        \frac{\widehat{\om}(x)}{(1-x)^p}, &   0\le x<1\\
        0, & x\ge1
        \end{array}\right.,
    $$
and
    $$
    V^{p}(x)=\left\{
        \begin{array}{cl}
        \widehat{\om}(x), &   0\le x<1\\
        0, & x\ge1
        \end{array}\right.,
    $$
we deduce
    \begin{equation*}
    \begin{split}
    I_2(r)\lesssim\int_0^1\left(\int_r^1\phi(t)\,dt\right)^p\frac{\widehat{\om}(r)}{(1-r)^p}\,dr\lesssim \M^p_p(\om)\|\phi\|^p_{L^p_{\ho}},
    \end{split}
    \end{equation*}
which together with \eqref{eq:j13} and \eqref{eq:j14} gives
(iii)$\Rightarrow$(i) and
    $$
    \|\h\|_{\left(L^p_{\ho},A^p_\om\right)}\lesssim \Mp(\om).
    $$
It is clear that the same argument proves
(iii)$\Rightarrow$(ii).\hfill$\Box$

\medskip

It is worth noticing that the implication (iii)$\Rightarrow$(i)
(as well as (iii)$\Rightarrow$(ii)) can also be proved by using
the theory of Bekoll\'e-Bonami weights. We will only give an
outline of this argument. It is strongly based on the following
essentially known result, which follows from
Lemma~\ref{le:condinte}(v) and \cite[Theorem 2.1]{LueInd85}.

\begin{lemma}\label{le:duality}
Let $1<p<\infty$ and $\om\in\R$. Then there exists
$\eta_0=\eta_0(p,\om)>-1$ such that for all $\eta\ge\eta_0$, the
dual of $A^p_\om$ can be identified with
$A^{p'}_{\om^{-\frac{p'}{p}}(1-|z|)^{p'\eta}}$ under the pairing
    \begin{equation}\label{eq:etapairing}
    \langle f,g\rangle_\eta=\int_\D f(z)\overline{g(z)}(1-|z|)^\eta\,dA(z).
    \end{equation}
Reciprocally, the dual of
$A^{p'}_{\om^{-\frac{p'}{p}}(1-|z|)^{p'\eta}}$ can be identified
with $A^p_\om$ under the same pairing.
\end{lemma}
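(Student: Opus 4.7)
The plan is to reduce the statement to Luecking's duality theorem \cite[Theorem~2.1]{LueInd85} for weighted Bergman spaces with Bekoll\'e--Bonami weights by rewriting the measure $\om(z)\,dA(z)$ in the form $u(z)(1-|z|)^\eta\,dA(z)$ for a weight $u\in B_p(\eta)$. Concretely, I would first apply Lemma~\ref{le:condinte}(v) with $p_0=p$ to produce $\eta_0=\eta_0(p,\om)>-1$ such that for every $\eta\ge\eta_0$ the weight $u(z):=\om(z)/(1-|z|)^\eta$ lies in $B_p(\eta)$. Since $\om(z)\,dA(z)=u(z)(1-|z|)^\eta\,dA(z)$, the space $A^p_\om$ coincides isometrically with the Bergman-type space with Bekoll\'e--Bonami weight $u$ relative to the reference measure $(1-|z|)^\eta\,dA(z)$, which is precisely the framework in which Luecking's theorem operates.

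For the first duality, invoke \cite[Theorem~2.1]{LueInd85}: the pairing $\langle\cdot,\cdot\rangle_\eta$ identifies $(A^p_\om)^*$ with the Bergman space induced by the conjugate weight $u^{-p'/p}(1-|z|)^\eta$. A direct computation,
$$
u(z)^{-p'/p}(1-|z|)^\eta=\om(z)^{-p'/p}(1-|z|)^{\eta p'/p+\eta}=\om(z)^{-p'/p}(1-|z|)^{\eta p'},
$$
identifies this dual with $A^{p'}_{\om^{-p'/p}(1-|z|)^{p'\eta}}$, as claimed. Consistency with H\"older's inequality under this pairing is built in: splitting $(1-|z|)^\eta=\om^{1/p}\cdot\om^{-1/p}(1-|z|)^\eta$ in the integrand of $\langle f,g\rangle_\eta$ gives the natural bound by $\|f\|_{A^p_\om}\|g\|_{A^{p'}_{\om^{-p'/p}(1-|z|)^{p'\eta}}}$.

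For the reciprocal statement the essential point is the standard self-dual feature of the Bekoll\'e--Bonami class: if $u\in B_p(\eta)$, then $v:=u^{-p'/p}\in B_{p'}(\eta)$. This follows by raising the $B_p(\eta)$-inequality to the power $p'/p$ and noting that $v^{-p/p'}=u$. A second application of Luecking's theorem with $p'$ in place of $p$ and $v$ in place of $u$ then yields the dual of $A^{p'}_{\om^{-p'/p}(1-|z|)^{p'\eta}}$, and unwinding $v^{-p/p'}(1-|z|)^\eta=u(1-|z|)^\eta=\om$ recovers $A^p_\om$ under the same pairing $\langle\cdot,\cdot\rangle_\eta$.

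The main obstacle is essentially notational rather than mathematical: one has to verify that the Bekoll\'e--Bonami weight produced by Lemma~\ref{le:condinte}(v) plugs exactly into Luecking's hypotheses, and that the exponents match when switching between the normalizations $\om\,dA$ and $u(1-|z|)^\eta\,dA$. Once this bookkeeping is in place the result is immediate from \cite[Theorem~2.1]{LueInd85}.
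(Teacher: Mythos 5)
Your proposal is correct and follows essentially the same route as the paper, which simply derives the lemma from Lemma~\ref{le:condinte}(v) combined with \cite[Theorem~2.1]{LueInd85}; your exponent bookkeeping $u^{-p'/p}(1-|z|)^\eta=\om^{-p'/p}(1-|z|)^{p'\eta}$ and the observation that $u\in B_p(\eta)$ implies $u^{-p'/p}\in B_{p'}(\eta)$ (giving the reciprocal identification) are exactly the details left implicit there.
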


\medskip

\noindent\emph{An alternative proof of}
{\rm(iii)}$\Rightarrow${\rm(i)}. Let $\eta_0=\eta_0(p,\om)>-1$ be
that of Lemma~\ref{le:duality} and fix $\eta\ge\eta_0$. For
simplicity, we write $v_{p'}(z)=
\om(z)^{-\frac{p'}{p}}(1-|z|)^{p'\eta}$. By
Lemma~\ref{le:duality}, the dual of  $A^{p'}_{v_{p'}}$ can be
identified with $A^p_\om$ under the pairing defined by
\eqref{eq:etapairing}. Therefore $\h:L^p_{\ho}\to A^p_\om$ is
bounded if and only if
    \begin{equation*}
    \left|\langle \h(\phi),h \rangle_\eta \right|
    \lesssim\|\phi\|_{L^p_{\ho}}\|h\|_{v_{p'}}, \quad \phi\in L^p_{\ho},\quad h\in
    A^{p'}_{v_{p'}}.
    \end{equation*}
To prove this, let $\phi\in L^p_{\ho}$ and $h\in A^{p'}_{v_{p'}}$. By
Fubini's theorem, the Cauchy integral formula and H\"older's
inequality, we deduce
    \begin{equation}\label{segundaprueba}
    \begin{split}
    \left|\langle \h(\phi), h \rangle_\eta\right|
    &=2\left|\int_0^1\phi(t) \left(\int_0^1\overline{h(r^2t)}r(1-r)^\eta\,dr\right)\,dt\right|
   \lesssim\|\phi\|_{L^p_{\ho}}I(h),
    \end{split}
    \end{equation}
where
    $$
    I(h)=\left(\int_0^1\left(\int_0^1|h(r^2t)|r(1-r)^\eta\,dr\right)^{p'}\ho^{-\frac{p'}{p}}(t)\,dt\right)^{\frac{1}{p'}}.
    $$
A change of variable, the hypotheses $\om\in\Mp$,
\cite[Theorem~1]{Muckenhoupt1972} and \eqref{eq:r0} give
    \begin{equation}
    \begin{split}\label{eq:j5}
    &\int_{\frac{1}{2}}^1\left(\int_0^1|h(r^2t)|r(1-r)^\eta\,dr\right)^{p'}
    \ho^{-\frac{p'}{p}}(t)\,dt\\
    &\lesssim \int_{0}^1\left(\int_0^t M_\infty(s,h)(1-s)^\eta\,ds\right)^{p'}
    \ho^{-\frac{p'}{p}}(t)\,dt\\
    &\lesssim \Mp^{p'}(\om)\int_{0}^1 M^{p'}_\infty(t,h)(1-t)^{p'\eta+1} \om^{-\frac{p'}{p}}(t)\,dt.
    \end{split}
    \end{equation}
Now, it follows from \cite[p.~9 (i)]{PelRat} that
    \begin{equation*}
    \begin{split}
    \int_{t}^1 v_{p'}(s)\,ds
    \ge  \int_{t}^{\frac{1+t}{2}}(1-s)^{p'\eta} \om^{-\frac{p'}{p}}(s)\,ds
    \asymp (1-t)^{p'\eta+1} \om^{-\frac{p'}{p}}(t).
    \end{split}
    \end{equation*}
Consequently, this together with \eqref{eq:j5} and
\eqref{le:minfty} yield
    \begin{equation*}
    \begin{split}
    &\int_{\frac{1}{2}}^1\left(\int_0^1|h(r^2t)|r(1-r)^\eta\,dr\right)^{p'}\ho^{-\frac{p'}{p}}(t)\,dt\\
    &\lesssim\Mp^{p'}(\om)\int_{0}^1 M^{p'}_\infty(t,h)\left(  \int_{t}^1 v_{p'}(s)\,ds \right)\,dt\lesssim  \Mp^{p'}(\om) \|h\|^{p'}_{v_{p'}}.
    \end{split}
    \end{equation*}
By combining this and \eqref{segundaprueba}, the proof is finished.\hfill$\Box$

\section{Background on smooth Hadamard products }\label{hadamard}

If $W(z)=\sum_{k\in J}b_kz^k$ is a polynomial and
$f(z)=\sum_{k=0}^{\infty}a_kz^k\in \H(\D)$, then the Hadamard
product
    $$
    (W\ast f)(z)=\sum_{k\in J}b_ka_kz^k
    $$
is well defined. Further, if $f\in H^1$, then
    $$
    (W\ast f)(e^{it})=\frac{1}{2\pi}\int_0^{2\pi}W(e^{i(t-\theta)})f(e^{i\theta})\,d\theta
    $$
is the usual convolution.

If $\Phi:\mathbb{R}\to\C$ is a $C^\infty$-function such that its
support $\supp(\Phi)$ is a compact subset of $(0,\infty)$, we set
    $$
    A_\Phi=\max_{s\in\mathbb{R}}|\Phi(s)|+\max_{s\in\mathbb{R}}|\Phi''(s)|,
    $$
and we consider the polynomials
    $$
    W_N^\Phi(z)=\sum_{k\in\mathbb
    N}\Phi\left(\frac{k}{N}\right)z^k,\quad N\in\N.
    $$
With this notation we can state the next result on smooth partial
sums.

\begin{lettertheorem}\label{th:cesaro}
Let $\Phi:\mathbb{R}\to\C$ be a $C^\infty$-function such that
$\supp(\Phi)\subset(0, \infty)$ is compact. Then the following
assertions hold:
\begin{itemize}
\item[\rm(i)] There exists a constant $C>0$ such that
    $$
    \left|W_N^\Phi(e^{i\theta})\right|\le C\min\left\{
    N\max_{s\in\mathbb{R}}|\Phi(s)|,
    N^{1-m}|\theta|^{-m}\max_{s\in\mathbb{R}}|\Phi^{(m)}(s)|
    \right\},
    $$
for all $m\in\N\cup\{0\}$, $N\in\N$ and $0<|\theta|<\pi$.
\item[\rm(ii)] There exists a constant $C>0$ such that
    $$
    \left|(W_N^\Phi\ast f)(e^{i\theta})\right|\le CA_\Phi
    M(|f|)(e^{i\theta})
    $$
for all $f\in H^1$. Here $M$ denotes the Hardy-Littlewood
maximal-operator
    $$
    M(|f|)(e^{i\theta})=\sup_{0<h<\pi}\frac{1}{2h}\int_{\theta-h}^{\theta+h}|f(e^{it})|\,dt.
    $$
\item[\rm(iii)] For each $p\in(1,\infty)$ there exists a constant
$C=C(p)>0$ such that
    $$
    \|W_N^\Phi\ast f\|_{H^p}\le C A_\Phi\|f\|_{H^p}
    $$
for all $f\in H^p$. \item[\rm(iv)] For each $p\in(1,\infty)$ and a
radial weight $\om$, there exists a constant $C=C(p,\om)>0$ such
that
    $$
    \|W_N^\Phi\ast f\|_{A^p_\om}\le C A_\Phi\|f\|_{A^p_\om}
    $$
for all $f\in A^p_\om$.
\end{itemize}
\end{lettertheorem}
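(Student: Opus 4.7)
The plan is to prove the four parts in sequence, each deduced from the previous. Part (i) establishes pointwise estimates on the trigonometric polynomial $W_N^\Phi(e^{i\theta})$; part (ii) follows from a near/far decomposition of the convolution on $\T$ using the estimates in (i); part (iii) is immediate from (ii) and the boundedness of the Hardy--Littlewood maximal operator $M$ on $L^p(\T)$; and part (iv) is obtained by applying (iii) at each radius $r$ and integrating against $\om$.

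For (i), the bound $|W_N^\Phi(e^{i\theta})| \le CN\max|\Phi|$ is trivial from the triangle inequality, since $\supp\Phi \subset (0,\infty)$ is compact and hence only $O(N)$ summands are nonzero. For the decay bound with $m\ge 1$, I would invoke the Poisson summation formula to write
\[
W_N^\Phi(e^{i\theta}) = \sum_{k \in \Z} \Phi(k/N)\, e^{ik\theta} = N\sum_{n \in \Z}\widetilde\Phi\bigl(N(\theta + 2\pi n)\bigr),
\]
where $\widetilde\Phi(\xi)=\int_{\mathbb R}\Phi(x)e^{-ix\xi}\,dx$. Integration by parts $m$ times gives $|\widetilde\Phi(\xi)| \le |\supp\Phi|\,|\xi|^{-m}\max|\Phi^{(m)}|$. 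For $|\theta| < \pi$, the principal term $n=0$ contributes the desired bound $CN^{1-m}|\theta|^{-m}\max|\Phi^{(m)}|$, and the tail $n\neq 0$ is summable with the same order since $|\theta+2\pi n| \asymp |n|$ there.

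For (ii), I would write
\[
(W_N^\Phi\ast f)(e^{i\theta}) = \frac{1}{2\pi}\int_{-\pi}^{\pi} W_N^\Phi\bigl(e^{i(\theta-t)}\bigr) f(e^{it})\,dt
\]
and split the integral at $|\theta-t|=1/N$. On the near region, the uniform estimate from (i) with $m=0$ yields a contribution of size $\max|\Phi|\cdot M(|f|)(e^{i\theta})$ once one notes the region has length $2/N$. On the far region, the decay estimate from (i) with $m=2$, combined with a dyadic decomposition into shells $2^j/N < |\theta-t|\le 2^{j+1}/N$, produces a geometrically convergent series bounded by $C\max|\Phi''|\cdot M(|f|)(e^{i\theta})$. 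Adding the two pieces gives (ii) with constant $CA_\Phi$.

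For (iii), the Hardy--Littlewood maximal theorem gives $\|M(|f|)\|_{L^p(\T)} \le C_p\|f\|_{L^p(\T)}$ for $1<p<\infty$, so (ii) together with the identification of $H^p$ with its boundary values produces (iii) directly. For (iv), observe that $(W_N^\Phi\ast f)(rz) = (W_N^\Phi\ast f_r)(z)$ where $f_r(w)=f(rw)\in H^p$; applying (iii) to $f_r$ gives $M_p(r,W_N^\Phi\ast f) \le CA_\Phi M_p(r,f)$ for every $r\in[0,1)$, and integrating against $r\om(r)\,dr$ yields (iv) for any radial weight. The principal technical difficulty lies in (i), whose execution via Poisson summation (or equivalently via iterated Abel summation exploiting $|1-e^{i\theta}|\asymp|\theta|$ for $|\theta|<\pi$) requires careful bookkeeping of the factor $\max|\Phi^{(m)}|$ so that the final constants in (ii)--(iv) depend on $\Phi$ only through $A_\Phi$.
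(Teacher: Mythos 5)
Your argument is correct in substance, but note that the paper itself does not prove Theorem~\ref{th:cesaro}: it is quoted from \cite[p.~111--113]{Pabook}, where the kernel estimate (i) is obtained by iterated Abel summation, i.e.\ by bounding the $m$-th finite differences $|\Delta^m\Phi(k/N)|\le N^{-m}\max_s|\Phi^{(m)}(s)|$ and using $\bigl|\sum_{k\le n}e^{ik\theta}\bigr|\lesssim|\theta|^{-1}$, rather than by Poisson summation. Your Fourier-analytic route to (i) is a genuine alternative and yields the same bound with the same dependence on $\max_s|\Phi^{(m)}(s)|$; parts (ii)--(iv) — the near/far split of the convolution at $|\theta-t|=1/N$ with dyadic shells, the Hardy--Littlewood maximal theorem on $L^p(\T)$, and the dilation identity $(W_N^\Phi\ast f)(rz)=(W_N^\Phi\ast f_r)(z)$ followed by integration against $\om(r)\,r\,dr$ — coincide with the standard treatment in the cited source, and your last step correctly covers an arbitrary radial weight. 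One small point to repair in (i): for $m=1$ the tail of the Poisson sum is \emph{not} ``summable with the same order,'' since $\sum_{n\neq0}|n|^{-1}$ diverges; instead estimate the terms $n\neq0$ with two integrations by parts, which costs only a constant independent of $N$ and $\theta$ because $|\theta|<\pi\le\pi|n|\lesssim|\theta+2\pi n|$, and observe that for $m=0$ the second bound in the minimum coincides with the first, so there is nothing to prove there. This repair is harmless for the applications in the paper, which use only $m=0$ and $m=2$ (whence the constant $A_\Phi$), and your bookkeeping does give constants depending on $\Phi$ only through $|\supp(\Phi)|$ and the relevant $\max_s|\Phi^{(m)}(s)|$, as required.
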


Theorem~\ref{th:cesaro} follows from the results and proofs in
\cite[p.~111-113]{Pabook}. We will also need the following lemma
whose proof follows from \eqref{Eq-MaPa} and
Lemma~\ref{le:condinte}.

\begin{lemma}\label{le:eqno}
Let $0<p<\infty$, $n_1,n_2\in\N$ with $n_1\le n_2\le Cn_1$, $\om\in\I\cup\R$ and $g\in\H(\D)$.
Then
    \begin{equation*}
    \begin{split}
    \|S_{n_1,n_2}g\|_{A^p_\om}
    &\asymp
    \left(\int_{1-\frac{1}{n_1}}^1\om(s)\,ds\right)^{1/p}\|S_{n_1,n_2}g\|_{H^p}\\
    &\asymp\left(\int_{1-\frac{1}{n_2}}^1\om(s)\,ds\right)^{1/p}\|S_{n_1,n_2}g\|_{H^p}.
    \end{split}
    \end{equation*}
\end{lemma}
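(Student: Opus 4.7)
The plan is to use polar coordinates to pass to an integral in $r$ of $M_p^p(r, S_{n_1,n_2}g)$ against $r\om(r)$, sandwich that integral between two moments of $\om$ using \eqref{Eq-MaPa}, and then identify both bounding moments with $\widehat{\om}(1-1/n_1)$ (equivalently $\widehat{\om}(1-1/n_2)$) via Lemma~\ref{le:condinte}.

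First, write
    \begin{equation*}
    \|S_{n_1,n_2}g\|_{A^p_\om}^p = 2\int_0^1 M_p^p(r,S_{n_1,n_2}g)\,r\,\om(r)\,dr.
    \end{equation*}
Applying \eqref{Eq-MaPa} inside the integral produces the two-sided bound
    \begin{equation*}
    2\|S_{n_1,n_2}g\|_{H^p}^p \int_0^1 r^{pn_2+1}\om(r)\,dr
    \le \|S_{n_1,n_2}g\|_{A^p_\om}^p
    \le 2\|S_{n_1,n_2}g\|_{H^p}^p \int_0^1 r^{pn_1+1}\om(r)\,dr.
    \end{equation*}
Next, by Lemma~\ref{le:condinte}(iv) each of the two moments satisfies
$\int_0^1 r^{pn_j+1}\om(r)\,dr \asymp \widehat{\om}(1-1/(pn_j+1))$ for $j=1,2$.

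It remains to observe that $\widehat{\om}(1-1/(pn_1+1))$, $\widehat{\om}(1-1/(pn_2+1))$, $\widehat{\om}(1-1/n_1)$ and $\widehat{\om}(1-1/n_2)$ are mutually comparable. Since $\widehat{\om}$ is decreasing and $n_1\le n_2\le Cn_1$, the four exponents $n_1,\, n_2,\, pn_1+1,\, pn_2+1$ are all comparable up to fixed constants (depending on $p$ and $C$), so the four values $1-1/(\cdot)$ lie in a range of the form $[1-1/n_1,\,1-c/n_1]$. For $\om\in\R$ the comparability of $\widehat{\om}$ at two such points is immediate from Lemma~\ref{le:condinte}(i), since both inequalities there yield $\widehat{\om}(s)\asymp\widehat{\om}(t)$ whenever $(1-s)/(1-t)$ is bounded above and below. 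For $\om\in\I$ one direction follows from monotonicity of $\widehat{\om}$, and the other direction comes from Lemma~\ref{le:condinte}(ii), which gives $\widehat{\om}(s)\le C'\left(\frac{1-s}{1-t}\right)^\b\widehat{\om}(t)\le C'' \widehat{\om}(t)$ whenever the quotient $(1-s)/(1-t)$ is bounded. Combining these comparabilities with the sandwich above and taking $p$-th roots yields the claimed equivalences.

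The only mildly delicate step is the last one, where the comparison of $\widehat{\om}$ at nearby points has to be handled separately for $\R$ and $\I$; both cases are however immediate consequences of Lemma~\ref{le:condinte}, so no genuine obstacle arises.
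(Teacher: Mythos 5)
Your argument is correct and is exactly the route the paper intends: it omits the details but states that the lemma ``follows from \eqref{Eq-MaPa} and Lemma~\ref{le:condinte}'', which is precisely your combination of the sandwich from \eqref{Eq-MaPa}, the moment estimate of Lemma~\ref{le:condinte}(iv), and the comparability of $\widehat{\om}$ at points with comparable distance to $1$ via Lemma~\ref{le:condinte}(i)--(ii). The handling of the $\I$ case (monotonicity in one direction, part (ii) in the other) and the dependence of constants only on $p$, $C$ and $\om$ are both in order.
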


The next auxiliary result allows us to prove the maximality of the
sublinear Hilbert operator $\hti$ in the study of the boundedness
of $\hg$ on weighted Bergman spaces. The proof of
Lemma~\ref{le:n14} is analogous to that of
\cite[Lemma~7]{GaGiPeSis} and is therefore omitted.

\begin{lemma}\label{le:n14}
Let $1<p<\infty$, $\om$ be a radial weight satisfying
\eqref{99} and $n_1,n_2\in\N$ with $n_1<n_2$. Let $f\in
A^p_\om$, $g(z)=\sum_{k=0}^{\infty} c_k z^k\in \H(\D)$ and
$h(z)=\sum_{k=0}^{\infty}c_k\left(\int_0^1t^{k}f(t)\,dt\right)z^k$.
Then there exists a constant $C=C(p)>0$ such that
    \begin{equation*}
    \|S_{n_1,n_2}h\|_{H^p} \le C\left(\int_0^1
    t^{\frac{n_1}{4}}|f(t)|\,dt\right)\| S_{n_1,n_2}g\|_{H^p}.
    \end{equation*}
\end{lemma}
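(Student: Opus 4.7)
The plan is to exploit the pointwise identity
\begin{equation*}
S_{n_1,n_2}h(z) = \int_0^1 f(t)\,(S_{n_1,n_2}g)(tz)\,dt,
\end{equation*}
obtained by pulling the finite sum $\sum_{k=n_1}^{n_2-1}c_k t^k z^k = (S_{n_1,n_2}g)(tz)$ inside the integral appearing in each coefficient of $h$. Because the sum defining $S_{n_1,n_2}h$ is finite, this exchange is automatic and does not even require $\int_0^1|f(t)|\,dt<\infty$, which in any case holds by hypothesis \eqref{99} as noted in the opening paragraphs of Section~\ref{sublinear}.

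Next, I would apply Minkowski's integral inequality to the $L^p$-norm on the unit circle (valid since $p>1$) to obtain
\begin{equation*}
\|S_{n_1,n_2}h\|_{H^p} = M_p(1,S_{n_1,n_2}h) \le \int_0^1 |f(t)|\,M_p(t,S_{n_1,n_2}g)\,dt.
\end{equation*}
The remaining step is to estimate $M_p(t,S_{n_1,n_2}g)$ for $t\in[0,1]$. Here the elementary bound \eqref{Eq-MaPa} yields $M_p(t,S_{n_1,n_2}g)\le t^{n_1}\|S_{n_1,n_2}g\|_{H^p}$, and combining this with the trivial inequality $t^{n_1}\le t^{n_1/4}$ valid on $[0,1]$, and inserting into the Minkowski estimate above, gives the lemma with constant $C=1$; the statement merely records the weaker $C=C(p)$.

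No substantive obstacle arises. The reason the paper cites \cite[Lemma~7]{GaGiPeSis} is presumably that the original argument there proceeds via smooth Hadamard products and Theorem~\ref{th:cesaro}; however, once \eqref{Eq-MaPa} is available the route above is shorter and self-contained. The suboptimal exponent $n_1/4$ rather than $n_1$ is more than sufficient for the subsequent applications of the lemma in the proof of Theorem~\ref{mainth}, where the quantity $\int_0^1 t^{n_1/4}|f(t)|\,dt$ is estimated against $\widetilde{\h}(f)$ on the diagonal argument.
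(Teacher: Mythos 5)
Your proof is correct. The identity $S_{n_1,n_2}h(z)=\int_0^1 f(t)\,(S_{n_1,n_2}g)(tz)\,dt$ is immediate because the block is a finite sum and the moments $\int_0^1 t^k f(t)\,dt$ converge absolutely (by \eqref{99} and $f\in A^p_\om$, as recorded at the beginning of Section~\ref{sublinear}; note that this much integrability is needed merely for $h$ to be defined, so your parenthetical that it is not required is a slight overstatement, though harmless here). Minkowski's inequality in continuous form for $p>1$ together with the right-hand inequality in \eqref{Eq-MaPa} then yields
$\|S_{n_1,n_2}h\|_{H^p}\le\left(\int_0^1 t^{n_1}|f(t)|\,dt\right)\|S_{n_1,n_2}g\|_{H^p}$,
which is even stronger than the stated estimate since $t^{n_1}\le t^{n_1/4}$ on $[0,1]$, so $C=1$ suffices. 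The paper gives no proof of its own, referring instead to \cite[Lemma~7]{GaGiPeSis}; your argument is a clean, self-contained rendering of the same basic mechanism (write the block of $h$ as an average of dilations of the block of $g$ and use that a polynomial whose lowest frequency is $n_1$ satisfies $M_p(t,\cdot)\le t^{n_1}\|\cdot\|_{H^p}$), and it avoids any appeal to the smooth Hadamard product machinery of Section~\ref{hadamard}, which is needed elsewhere but not for this lemma. The exponent $n_1/4$ in the statement is indeed just a harmless weakening inherited from the dyadic-block formulation in \cite{GaGiPeSis}, and all later applications in the proofs of Theorems~\ref{mainth} and \ref{mainthcompact} only use the weaker form.
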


The next known result can be proved by summing by parts and using
the M.~Riesz projection theorem~\cite{GaGiPeSis,LaNoPa}.

\begin{letterlemma}\label{le:A}
Let $1<p<\infty$ and
$\lambda=\left\{\lambda_k\right\}_{k=0}^\infty$ be a monotone
sequence of positive numbers. Let $(\lambda
g)(z)=\sum_{k=0}^{\infty} \lambda_kb_k z^k$, where
$g(z)=\sum_{k=0}^\infty b_k z^k$.
\begin{itemize}
\item[\rm(i)] If $\left\{\lambda_k\right\}_{n=0}^\infty$ is
nondecreasing, then there exists a constant $C>0$ such that
    $$
    C^{-1}\lambda_{n_1}\|S_{n_1,n_2} g\|_{H^p}\le\|S_{n_1,n_2}\lambda g\|_{H^p}
    \le C\lambda_{n_2}\|S_{n_1,n_2}g\|_{H^p}.
    $$
\item[\rm(ii)] If $\left\{\lambda_n\right\}_{n=0}^\infty$ is
nonincreasing, then there exists a constant $C>0$ such that
    $$
    C^{-1}\lambda_{n_2}\|S_{n_1,n_2} g\|_{H^p}
    \le\|S_{n_1,n_2} \lambda g\|_{H^p}
    \le C\lambda_{n_1}\|S_{n_1,n_2} g\|_{H^p}.
    $$
\end{itemize}
 \end{letterlemma}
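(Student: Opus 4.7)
The plan is to prove (i) by summation by parts, with M.~Riesz's theorem on the uniform $H^p$-boundedness of partial sums used to bound all the intermediate blocks, and then to deduce (ii) and the lower bounds in both parts by the inversion trick $\lambda_k\mapsto 1/\lambda_k$.

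\smallskip

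For the upper bound in (i), I would set $P_k(z)=\sum_{j=n_1}^{k}b_jz^j$ for $n_1\le k\le n_2-1$ (so $P_{n_2-1}=S_{n_1,n_2}g$) and apply Abel summation:
\begin{equation*}
S_{n_1,n_2}(\lambda g)(z)=\sum_{k=n_1}^{n_2-1}\lambda_k(P_k-P_{k-1})
=\lambda_{n_2-1}P_{n_2-1}-\sum_{k=n_1}^{n_2-2}(\lambda_{k+1}-\lambda_k)P_k,
\end{equation*}
with $P_{n_1-1}=0$. The key observation is that each $P_k$ equals $S_{n_1,k+1}g$, i.e.\ the orthogonal projection (in the Fourier sense) of the polynomial $S_{n_1,n_2}g\in H^p$ onto the block $[n_1,k]$. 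Since $1<p<\infty$, the M.~Riesz projection theorem gives a constant $C_p$ independent of $k$, $n_1$, $n_2$ such that $\|P_k\|_{H^p}\le C_p\|S_{n_1,n_2}g\|_{H^p}$. Taking $H^p$-norms in the Abel identity and using that $\lambda_{k+1}-\lambda_k\ge0$ (nondecreasing case),
\begin{equation*}
\|S_{n_1,n_2}(\lambda g)\|_{H^p}\le C_p\|S_{n_1,n_2}g\|_{H^p}\Bigl(\lambda_{n_2-1}+\sum_{k=n_1}^{n_2-2}(\lambda_{k+1}-\lambda_k)\Bigr)\le 2C_p\lambda_{n_2}\|S_{n_1,n_2}g\|_{H^p},
\end{equation*}
since the inner sum telescopes to $\lambda_{n_2-1}-\lambda_{n_1}$ and $\lambda_{n_2-1}\le\lambda_{n_2}$.

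\smallskip

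For the upper bound in (ii), I would use the dual Abel formula with tail sums $R_k(z)=\sum_{j=k}^{n_2-1}b_jz^j$:
\begin{equation*}
S_{n_1,n_2}(\lambda g)(z)=\lambda_{n_1}R_{n_1}+\sum_{k=n_1+1}^{n_2-1}(\lambda_k-\lambda_{k-1})R_k,
\end{equation*}
where now $\lambda_k-\lambda_{k-1}\le0$. Each $R_k$ is again a partial sum projection of $S_{n_1,n_2}g$, so $\|R_k\|_{H^p}\le C_p\|S_{n_1,n_2}g\|_{H^p}$ by M.~Riesz, and the telescoping argument yields $\|S_{n_1,n_2}(\lambda g)\|_{H^p}\lesssim\lambda_{n_1}\|S_{n_1,n_2}g\|_{H^p}$.

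\smallskip

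The lower bounds follow by the inversion trick. Write $\mu_k=1/\lambda_k$; then $\mu$ is nonincreasing when $\lambda$ is nondecreasing, and since $g=\mu\cdot(\lambda g)$ coefficientwise, the upper bound from (ii) applied to $\mu$ and $\lambda g$ gives
\begin{equation*}
\|S_{n_1,n_2}g\|_{H^p}=\|S_{n_1,n_2}(\mu(\lambda g))\|_{H^p}\le C\mu_{n_1}\|S_{n_1,n_2}(\lambda g)\|_{H^p}=C\lambda_{n_1}^{-1}\|S_{n_1,n_2}(\lambda g)\|_{H^p},
\end{equation*}
which is the claimed lower bound in (i); the lower bound in (ii) is symmetric. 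The only real content is the Abel rearrangement and the uniform $H^p$-boundedness of partial sums for $1<p<\infty$; there is no serious obstacle, which matches the fact that the authors cite it as a \emph{known} lemma.
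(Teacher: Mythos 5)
Your argument is correct and is exactly the route the paper has in mind: it states that the lemma "can be proved by summing by parts and using the M.~Riesz projection theorem" (citing \cite{GaGiPeSis,LaNoPa}), which is precisely your Abel summation with uniformly bounded partial-sum projections, and your inversion trick $\lambda_k\mapsto1/\lambda_k$ is a clean way to get the lower bounds. No gaps to report.
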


\section{Proof of Theorem~\ref{mainth}. }\label{main}

We may assume without loss of generality that $\int_0^1
\om(r)\,dr=1$. Throughout the proof $\{r_n\}_{n=0}^\infty$ is the sequence
defined by \eqref{rn} with $\alpha=1$.

\subsection{Sufficiency}

Theorem~\ref{th:dec1}, with $\alpha=1$, shows that
    \begin{equation}
    \begin{split}\label{eq:sb1}
    \left\|\hg(f)\right\|^q_{A^q_\om}
    \asymp&\sum_{n=0}^\infty2^{-n}\left\|\Delta^\om_n\hg(f)\right\|^q_{H^q}
    \end{split}
    \end{equation}
for all $f\in\H(\D)$. Now Lemma~\ref{le:n14},
H\"older's inequality and \eqref{le:minfty} yield
    \begin{equation}
    \begin{split}\label{eq:sb2}
    \left\|\Delta^\om_0\hg(f)\right\|_{H^q}
    &\lesssim|g'(0)|\int_0^1|f(t)|\,dt+\left\|S_{M_1,1}\hg(f)\right\|_{H^q}\\
    &\lesssim |g'(0)|\int_0^1|f(t)|\,dt+\left\|S_{M_1,1}g'\right\|_{H^q}\int_0^1 t^{1/4}|f(t)|\,dt\\
    &\lesssim\left(|g'(0)|+\left\|S_{M_1,1}g'\right\|_{H^q}\right)\int_0^1|f(t)|\,dt\\
    &\lesssim \left(\int_0^1 M_{\infty}^p(t,f)\,\widehat{\om}(t)\,dt\right)^{1/p}\lesssim \left\|f\right\|_{A^p_\om},
    \end{split}
    \end{equation}
where the constants of comparison depend on $p$, $q$, $\om$ and
$g$.

Let first $1<p\le q<\infty$ and assume that
$g\in\Lambda\left(q,\frac{1}{p},\ho^{\frac{1}{p}-\frac{1}{q}}\right)$,
that is,
    $$
    M_q(r,g')\le \|g-g(0)\|_{\Lambda\left(q,\frac{1}{p},\ho^{\frac{1}{p}-\frac{1}{q}}\right)}\frac{\widehat{\om}(r)^{\frac1p-\frac1q}}{(1-r)^{1-\frac1p}},\quad
    0\le r<1.
    $$
Lemma~\ref{le:n14}, Lemma~\ref{le:4}, the M.~Riesz projection
theorem and the assumption give
    \begin{equation}
    \begin{split}\label{eq:sb3}
    \left\|\Delta^\om_n\hg(f)\right\|^q_{H^q}
    &\lesssim\left(\int_0^1 t^{\frac{M_n}{4}}|f(t)|\,dt\right)^q\left\|\Delta^\om_n g'\right\|^q_{H^q}\\
    &\lesssim\left(\int_0^1 t^{\frac{M_n}{4}}|f(t)|\,dt\right)^qM^q_q\left(1-\frac{1}{M_{n+1}},g'\right)\\
    &\lesssim
    \left(\int_0^1 t^{\frac{M_n}{4}}|f(t)|\,dt\right)^q
    \widehat{\om}\left(1-\frac1{M_{n+1}}\right)^{q(\frac1p-\frac1q)}M^{q(1-\frac1p)}_{n+1},
    \end{split}
    \end{equation}
where in the last inequality the constant of comparison depend on
$\|g-g(0)\|^q_{\Lambda\left(q,\frac{1}{p},\ho^{\frac{1}{p}-\frac{1}{q}}\right)}$.

Let $f\in A^p_\om$. Then \eqref{le:minfty} yields
$M_\infty(r,f)\lesssim
u_p(r)=\left((1-r)\ho(r)\right)^{-\frac{1}{p}}$. This together the
fact that $u_p\in\R$ by Lemma~\ref{Lemma:u_p}, and
Lemma~\ref{le:condinte}(iv) yields
    \begin{equation*}
    \int_0^1 t^\frac{M_n}{4}|f(t)|\,dt\lesssim \int_0^1 t^\frac{M_n}{4}u_p(t)\,dt
    \asymp\widehat{u}_p\left(1-\frac1{M_{n+1}}\right)\asymp\frac{{u}_p\left(1-\frac1{M_{n+1}}\right)}{M_{n+1}},
    \end{equation*}
and thus
    \begin{equation}\label{5.}
    M_{n+1}^{1-\frac1p}\widehat{\om}\left(1-\frac1{M_{n+1}}\right)^\frac1p\int_0^1t^{\frac{M_n}4}|f(t)|\,dt\lesssim \|f\|_{A^p_\om}.
    \end{equation}
Let now $k_0\in\N$ to be fixed later. Since $q\ge p$, by using \eqref{eq:sb3},
\eqref{5.},  H\"older's inequality and
\eqref{le:minfty}, we deduce
    \begin{equation}
    \begin{split}\label{eq:sb4}
   & \sum_{n=1}^{k_0}2^{-n}\left\|\Delta^\om_n\hg(f)\right\|^q_{H^q}
   \\ &\lesssim\sum_{n=1}^{k_0}2^{-n}
    \left(\int_0^1t^{\frac{M_n}{4}}|f(t)|\,dt\right)^q
\widehat{\om}\left(1-\frac1{M_{n+1}}\right)^{q(\frac1p-\frac1q)}M^{q(1-\frac1p)}_{n+1}
\\ & \lesssim  \|f\|^{q-p}_{A^p_\om}\sum_{n=1}^{k_0}2^{-n}
    \left(\int_0^1t^{\frac{M_n}{4}}|f(t)|\,dt\right)^p
M^{p-1}_{n+1}
\\
    &\lesssim\|f\|^{q-p}_{A^p_\om}\left(\int_0^1|f(t)|\,dt\right)^p\sum_{n=1}^{k_0}2^{-n}  M^{p-1}_{n+1}\\
    &\lesssim \|f\|^{q-p}_{A^p_\om}\int_0^1M_{\infty}^p(t, f)\,\widehat{\om}(t)\,dt\lesssim\left\|f\right\|^q_{A^p_\om},
    \end{split}
    \end{equation}
where the constants of comparison depend on $p$, $q$, $\om$ and
$k_0$.

Let now $\gamma_1$ be the constant appearing in
Lemma~\ref{le:Mncomparable}, and choose $k_0$ to be the smallest
natural number such that $r_{k_0}\ge \max\left\{
\frac{1}{2^{\gamma_1}}, \frac{1}{2} \right\}$ and
$2^{k_0\gamma_1}\ge 4$. Then, by \eqref{eq:sb3}, \eqref{5.} and
Lemma~\ref{le:Mncomparable}, we have
    \begin{equation}
    \begin{split}\label{eq:sb5}
    &\sum_{n=k_0+1}^\infty
    2^{-n}\left\|\Delta^\om_n\hg(f)\right\|^q_{H^q}\\
    &\lesssim\sum_{n=k_0+1}^\infty2^{-n}\left(\int_0^1t^{\frac{M_n}{4}}|f(t)|\,dt\right)^q\\
    &\quad\cdot\widehat{\om}\left(1-\frac1{M_{n+1}}\right)^{q(\frac1p-\frac1q)}M^{q(1-\frac1p)}_{n+1}\\
    &\lesssim \|f\|^{q-p}_{A^p_\om}\sum_{n=k_0+1}^\infty2^{-n}\left(\int_0^1t^{\frac{M_n}{4}}|f(t)|\,dt\right)^pM^{p-1}_{n+1}\\
    &\le C \|f\|^{q-p}_{A^p_\om}2^{(k_0+1)(\g_2(p-1)-1)} \sum_{j=0}^\infty 2^{-j}\left(\int_0^1 t^{\frac{2^{k_0\gamma_1} M_{j+1}}{4}}|f(t)|\,dt\right)^p M^{p-1}_{j+1}\\
    &\lesssim \|f\|^{q-p}_{A^p_\om}\sum_{j=0}^\infty 2^{-j}\left(\int_0^1 t^{M_{j+1}}|f(t)|\,dt\right)^p M^{p-1}_{j+1}.
    \end{split}
    \end{equation}
On the other hand, the M.~Riesz projection theorem and
Lemma~\ref{le:4} give
    \begin{equation*}
    \left\|\Delta^\om_n \left(\frac{1}{1-z}\right)
    \right\|^p_{H^p}\asymp  M^{p-1}_{n+1}.
    \end{equation*}
Now, by using Theorem~\ref{th:dec1} together with
Lemma~\ref{le:A}(ii), we get
    \begin{equation}
    \begin{split}\label{eq:sb6}
    \left\|\hti(f)\right\|^p_{A^p_\om}&\asymp \sum_{n=0}^\infty
    2^{-n}\left\|\Delta^\om_n\hti(f)\right\|^p_{H^p}
    \\ & \gtrsim \sum_{n=0}^\infty
    2^{-n} \left(\int_0^1 t^{M_{n+1}}|f(t)|\,dt\right)^p
    \left\|\Delta^\om_n \left(\frac{1}{1-z}\right) \right\|^p_{H^p}
    \\ & \gtrsim \sum_{n=0}^\infty
    2^{-n} \left(\int_0^1 t^{M_{n+1}}|f(t)|\,dt\right)^p
    M^{p-1}_{n+1}.
    \end{split}
    \end{equation}
So, by combining \eqref{eq:sb1}, \eqref{eq:sb2}, \eqref{eq:sb4},
\eqref{eq:sb5} and \eqref{eq:sb6}, we finally deduce
    \begin{equation*}
    \left\|\hg(f)\right\|^q_{A^q_\om}\lesssim \|g-g(0)\|^q_{\Lambda\left(q,\frac{1}{p},\ho^{\frac{1}{p}-\frac{1}{q}}\right)}\left(\|f\|^q_{A^p_\om}+\|f\|^{q-p}_{A^p_\om}\left\|\hti(f)\right\|^p_{A^p_\om}\right),
    \end{equation*}
which together with Corollary~\ref{hilbertapw} gives
$\left\|\hg(f)\right\|^q_{A^q_\om}\lesssim
\|g-g(0)\|^q_{\Lambda\left(q,\frac{1}{p},\ho^{\frac{1}{p}-\frac{1}{q}}\right)}\|f\|^q_{A^p_\om}$.
This finishes the proof of the sufficiency in the case $1<p\le
q<\infty$.
\medskip

Let now $1<q<p<\infty$ and assume that $g'\in
H\left(q,s,\ho_{s\left(1-\frac{1}{q}\right)}\right)$. By
\eqref{eq:sb1}, \eqref{eq:sb2}, Lemma~\ref{le:n14} and
H\"{o}lder's inequality, we deduce
    \begin{equation}
    \begin{split}\label{eq:sb10}
    &\left\|\hg(f)\right\|^q_{A^q_\om}
    \asymp\sum_{n=0}^\infty2^{-n}\left\|\Delta^\om_n\hg(f)\right\|^q_{H^q}\\
    &\lesssim \|f\|^q_{A^p_\om}+\sum_{n=1}^\infty2^{-n}\left\|\Delta^\om_n\hg(f)\right\|^q_{H^q}\\
    &\lesssim \|f\|^q_{A^p_\om}+\sum_{n=1}^\infty2^{-n} \left(\int_0^1t^{\frac{M_n}{4}}|f(t)|\,dt\right)^q\left\|\Delta^\om_n g'\right\|^q_{H^q}\\
    &\lesssim \|f\|^q_{A^p_\om}+\left[\sum_{n=1}^\infty2^{-n} M_n^{p-1}\left(\int_0^1t^{\frac{M_n}{4}}|f(t)|\,dt\right)^p\right]^{\frac{q}{p}}
    \left[\sum_{n=1}^\infty2^{-n} \frac{\left\|\Delta^\om_n g'\right\|^s_{H^q}}{M_n^{\left(1-\frac1p\right)s}}\right]^{1-\frac{q}{p}}.
    \end{split}
    \end{equation}
Corollary~\ref{co:mixtogamma} and the assumption $\om\in\R$ yield
    \begin{equation}
    \begin{split}\label{eq:sb11}
    \left[\sum_{n=1}^\infty2^{-n} \frac{\left\|\Delta^\om_n g'\right\|^s_{H^q}}{M_n^{\left(1-\frac1p\right)s}}\right]^{1-\frac{q}{p}}
    &\lesssim
    \left[\int_0^1 M^s_q(r,g')(1-r)^{\left(1-\frac1p\right)s}\om(r)\,dr\right]^{1-\frac{q}{p}}\\
    &\lesssim \|g'\|^q_{H\left(q,s,\ho_{s\left(1-\frac{1}{q}\right)}\right)}.
    \end{split}
    \end{equation}
Moreover, by arguing as in the previous case we obtain
    \begin{equation*}
    \begin{split}
    \left[\sum_{n=1}^\infty2^{-n} M_n^{p-1}\left(\int_0^1t^{\frac{M_n}{4}}|f(t)|\,dt\right)^p\right]^{\frac{q}{p}}
    &\lesssim \|f\|^q_{A^p_\om}+ \|\hti(f)\|^q_{A^p_\om}\lesssim\|f\|^q_{A^p_\om},
    \end{split}
    \end{equation*}
which together with \eqref{eq:sb10} and \eqref{eq:sb11} gives
    $$
    \left\|\hg(f)\right\|^q_{A^q_\om}
    \lesssim \|g'\|^q_{H\left(q,s,\ho_{s\left(1-\frac{1}{q}\right)}\right)}\|f\|^q_{A^p_\om}+\|f\|^q_{A^p_\om}.
    $$
The proof of the sufficiency is now complete.

\subsection{Test functions}

Before passing to the proof of the necessity part of
Theorem~\ref{mainth}, we will construct appropriate test
functions.  If $q<p$ we set up a family of functions
${Q_\r}\in A^p_\om$, depending on $g$, such that
    $$
    \lim_{\r\to1^-}\|Q_\r\|^p_{A^p_\om}\asymp\|g'\|_{H\left(q,s,\ho_{s\left(1-\frac{1}{q}\right)}\right)}.
    $$
In the case $q\ge p$ we will use the next result which can be
proved by using ideas from \cite[Lemma~1]{GaGiPeSis}.

\begin{lemma}\label{le:hg1}
Let $0<p-1<\gamma<\infty$ and $\om\in\I\cup\R$. Let
$E\subset(0,\infty)$ be a bounded set such that
$\rm{dist}$$(E,0)>0$. For $N\in\N$, let $a_N=1-\frac1N$, and
consider the functions
    \begin{equation*}
    \psi_{N,\om}(s)=\left[N^{\gamma+1}\om\left(S\left(a_N\right)\right)\right]^{-\frac{1}{p}}\int_0^1\frac{t^{sN}}{(1-a_Nt)^{\frac{\gamma+1}{p}}}\,dt,\quad
    s>0,
    \end{equation*}
and
    \begin{equation}\label{eq:psialpha2}
    \varphi_{N,\om}(s)=\frac{1}{\psi_{N,\om}(s)},\quad
    s>0.
    \end{equation}
Then the following assertions hold:
\begin{itemize}
\item[\rm(i)]$\psi_{N,\om}, \varphi_{N,\om} \in
C^\infty((0,\infty))$. \item[\rm(ii)] There exists a constant
$C=C(E)>0$ such that
    $$
    C^{-1} N^{-1}\om\left(S\left(a_N\right)\right)^{-\frac{1}{p}}\le|\psi_{N,\om}(s)|\le C N^{-1}\om\left(S\left(a_N\right)\right)^{-\frac{1}{p}},\quad s\in E,\quad N\to
    \infty.
    $$
\item[\rm(iii)] For each $m\in\N$, there exists a constant
$C=C(m,E)>0$ such that
    $$
    |\psi^{(m)}_{N,\om}(s)|\le CN^{-1}\om\left(S\left(a_N\right)\right)^{-\frac{1}{p}},\quad
    s\in E,\quad N\in\N.
    $$
\item[\rm(iv)]For each $m\in\N$, there exists a constant
$C=C(m,E)>0$ such that
    \begin{equation}\label{eq:psialpha3}
    |\varphi^{(m)}_{N,\om}(s)|\le C
    N\om\left(S\left(a_N\right)\right)^{\frac{1}{p}},\quad
    s\in E,\quad N\in\N.
    \end{equation}
\end{itemize}
\end{lemma}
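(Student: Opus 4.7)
The plan is to reduce every assertion to uniform estimates, as $s$ ranges over $E$ and $N\in\N$, of the basic integrals
\[
I_{N,m}(s)=\int_0^1\frac{(N|\log t|)^m\,t^{sN}}{(1-a_N t)^{(\gamma+1)/p}}\,dt,\qquad m\in\N\cup\{0\}.
\]
The key pointwise tool is the elementary calculus fact that $(Ny)^m e^{-sNy}$ attains its maximum on $[0,\infty)$ at $y=m/(sN)$ with value $(m/s)^m e^{-m}$; setting $y=-\log t$ gives $(N|\log t|)^m\,t^{sN}\le (m/c_0)^m e^{-m}$ for all $t\in(0,1]$ and all $s\ge c_0:=\operatorname{dist}(E,0)>0$, uniformly in $N$. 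This pointwise bound justifies differentiation under the integral sign in $s$, so $\psi_{N,\om}\in C^\infty((0,\infty))$ with the $m$-th derivative represented by the same integral with $(N\log t)^m$ inserted. Since the integrand defining $\psi_{N,\om}$ is strictly positive, $\psi_{N,\om}>0$ on $(0,\infty)$, so $\varphi_{N,\om}=1/\psi_{N,\om}\in C^\infty((0,\infty))$ as well, establishing (i).

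For (ii) and (iii), I would split $[0,1]$ at $t=1-1/N$ and use $1-a_N t=(1-t)+t/N$. On $[1-1/N,1]$ one has $1-a_N t\asymp 1/N$, $|\log t|\asymp 1-t\lesssim 1/N$, and $t^{sN}\gtrsim e^{-s}\gtrsim 1$ for $s\in E$, whence the contribution to $I_{N,m}(s)$ is comparable to $N^{(\gamma+1)/p-1}$ from above and, when $m=0$, also from below. On $[0,1-1/N]$ one has $1-a_N t\asymp 1-t$, and the pointwise bound above together with the hypothesis $\gamma>p-1$ (equivalently $(\gamma+1)/p>1$) yields
\[
\int_0^{1-1/N}\frac{(N|\log t|)^m\,t^{sN}}{(1-t)^{(\gamma+1)/p}}\,dt\lesssim\int_0^{1-1/N}\frac{dt}{(1-t)^{(\gamma+1)/p}}\asymp N^{(\gamma+1)/p-1}.
\]
Combining, $I_{N,0}(s)\asymp N^{(\gamma+1)/p-1}$ uniformly on $E$ and $I_{N,m}(s)\lesssim N^{(\gamma+1)/p-1}$ for $m\ge 1$. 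Multiplying by the prefactor $[N^{\gamma+1}\om(S(a_N))]^{-1/p}$ converts these into the desired estimates for $\psi_{N,\om}$ and its derivatives of order $m\ge 1$, which is exactly (ii) and (iii).

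For (iv), I would argue by induction on $m$, differentiating the identity $\psi_{N,\om}\cdot\varphi_{N,\om}\equiv 1$ via the Leibniz rule to obtain
\[
\psi_{N,\om}(s)\,\varphi_{N,\om}^{(m)}(s)=-\sum_{k=1}^m\binom{m}{k}\psi_{N,\om}^{(k)}(s)\,\varphi_{N,\om}^{(m-k)}(s).
\]
Parts (ii)-(iii) give $|\psi_{N,\om}^{(k)}(s)|\lesssim|\psi_{N,\om}(s)|$ uniformly in $s\in E$ and $N\in\N$; the inductive hypothesis gives $|\varphi_{N,\om}^{(m-k)}(s)|\lesssim|\psi_{N,\om}(s)|^{-1}$; combining yields $|\varphi_{N,\om}^{(m)}(s)|\lesssim|\psi_{N,\om}(s)|^{-1}\asymp N\,\om(S(a_N))^{1/p}$, as claimed.

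The main obstacle I anticipate is keeping the estimates uniform in both $s\in E$ and $N\in\N$ at once. The hypothesis $\operatorname{dist}(E,0)>0$ is essential for the pointwise domination of $(N|\log t|)^m t^{sN}$ by a constant depending only on $m$ and $E$, and the strict inequality $\gamma>p-1$ is what makes the inner-interval integral $\int_0^{1-1/N}(1-t)^{-(\gamma+1)/p}\,dt$ blow up at exactly the rate $N^{(\gamma+1)/p-1}$ as the boundary contribution; were $\gamma\le p-1$, the inner piece would be bounded or only logarithmically divergent, and the matching lower bound in (ii) would fail.
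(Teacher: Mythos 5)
Your proof is correct. Note that the paper itself contains no written proof of Lemma~\ref{le:hg1}: it only remarks that the lemma ``can be proved by using ideas from'' \cite[Lemma~1]{GaGiPeSis}, and your argument supplies exactly the kind of details being deferred there — differentiation under the integral sign justified by the uniform pointwise bound $(N|\log t|)^m t^{sN}\le C(m,E)$ for $s\in E$ (this is where $\mathrm{dist}(E,0)>0$ enters), the splitting of $[0,1]$ at $1-\frac1N$ with $1-a_Nt\asymp\frac1N$ on the outer piece and $1-a_Nt\asymp 1-t$ on the inner piece, which gives $\int_0^1 t^{sN}(1-a_Nt)^{-\frac{\gamma+1}{p}}dt\asymp N^{\frac{\gamma+1}{p}-1}$ uniformly on $E$, and the Leibniz-rule induction on $\psi_{N,\om}\varphi_{N,\om}\equiv1$ for part (iv). The uniformity issues you flag are handled correctly: (ii) is asymptotic, and for the finitely many small $N$ in (iii)--(iv) the bounds are trivial by continuity, so the constants can be taken to depend only on $m$, $E$ (and $p,\gamma,\om$). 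One small inaccuracy in your closing commentary: if $\gamma\le p-1$ it is your crude estimate of the inner integral (obtained by discarding $t^{sN}$ altogether) that would break down, not the lower bound in (ii); keeping the decay $t^{sN}\le e^{-sN(1-t)}$ one still gets $\int_0^{1-1/N}(N|\log t|)^m t^{sN}(1-t)^{-\frac{\gamma+1}{p}}\,dt\lesssim N^{\frac{\gamma+1}{p}-1}$ for any exponent. The hypothesis $\gamma>p-1$ is simply what makes your simpler bound sufficient, so this remark does not affect the validity of the proof under the stated assumptions.
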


Next, we will construct the test functions which will be used in
the proof of the case $q<p$. As usual, we write $f_\r(z)=f(\r z)$
for each $0\le\r<1$.

\begin{lemma}\label{testfunctionsq<p}
Let $1<q<p<\infty$, $\om\in\R\cap\Mp$ and $g\in H(\D)$  such that
$g'\in H\left(q,s,\ho_{s\left(1-\frac{1}{q}\right)}\right)$. Then
the functions
    \begin{equation*}
    \phi_\r(r)=\left(M_q(r,g_\r')(1-r)^{1-\frac{1}{q}}\right)^\frac{q}{p-q},\quad0<\r<1,
    \end{equation*}
and
    \begin{equation*}
    Q_\r(z)=\int_0^1\frac{\phi_\r(t)}{1-tz}\,dt,\quad
    z\in\D,
    \end{equation*}
satisfy
    \begin{equation}\label{9.}
    \begin{split}
    Q_\r(t)\gtrsim\phi_\r(t),\quad0\le t<1,
    \end{split}
    \end{equation}
and
    \begin{equation}\label{12.}
    \|Q_\r\|_{A^p_\om}^p\asymp\int_0^1\phi^p_\r(t)\,\widehat{\om}(t)\,dt<\infty.
    \end{equation}
\end{lemma}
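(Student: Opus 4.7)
The plan is to recognize $Q_\r$ as the sublinear Hilbert transform of $\phi_\r$, which immediately ties the two required conclusions to results already established in the paper. Since $\phi_\r\ge0$, we have $Q_\r=\hti(\phi_\r)$, and the Maclaurin coefficients $b_k=\int_0^1 t^k\phi_\r(t)\,dt$ of $Q_\r$ are non-negative, hence $M_\infty(r,Q_\r)=Q_\r(r)$ for every $r\in[0,1)$. The algebraic identity $pq/(p-q)=s$ rewrites $\int_0^1\phi_\r(t)^p\ho(t)\,dt$ as $\int_0^1 M_q(t,g_\r')^s(1-t)^{s(1-1/q)}\ho(t)\,dt=\|g_\r'\|_{H(q,s,\ho_{s(1-1/q)})}^s$, which is finite for $\r<1$ (since $g_\r'$ is analytic on a neighborhood of $\overline{\D}$) and tends, as $\r\to1^-$, to $\|g'\|_{H(q,s,\ho_{s(1-1/q)})}^s<\infty$ by hypothesis; this handles both the finiteness and the right-hand quantity of \eqref{12.}.

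To establish \eqref{9.}, I restrict the defining integral to $s\in[t,(1+t)/2]$. On this subinterval $1-st\le1-t^2\le2(1-t)$, so $1/(1-st)\gtrsim1/(1-t)$, and it suffices to show that $\phi_\r(s)\gtrsim\phi_\r(t)$ uniformly there. Since $\phi_\r(s)^{(p-q)/q}=M_q(s,g_\r')(1-s)^{1-1/q}$, the monotonicity of $r\mapsto M_q(r,g_\r')$ gives $M_q(s,g_\r')\ge M_q(t,g_\r')$, while $1-s\ge(1-t)/2$ forces $(1-s)^{1-1/q}\ge 2^{-(1-1/q)}(1-t)^{1-1/q}$. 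Raising to the positive power $q/(p-q)$ yields $\phi_\r(s)\ge 2^{-(q-1)/(p-q)}\phi_\r(t)$, whence
$$
Q_\r(t)\ge\int_t^{(1+t)/2}\frac{\phi_\r(s)}{1-st}\,ds\gtrsim\frac{1}{1-t}\cdot\frac{1-t}{2}\cdot\phi_\r(t)\asymp\phi_\r(t).
$$

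For \eqref{12.}, the upper bound is immediate from $Q_\r=\hti(\phi_\r)$ together with Theorem~\ref{th:gorro}, which applies because $\om\in\R\cap\Mp$ and yields $\|\hti(\phi_\r)\|_{A^p_\om}\lesssim\|\phi_\r\|_{L^p_{\ho}}$. For the matching lower bound, I combine \eqref{9.} with the identity $Q_\r(r)=M_\infty(r,Q_\r)$ coming from the non-negativity of the Taylor coefficients of $Q_\r$ and the inequality \eqref{le:minfty} to obtain
$$
\int_0^1\phi_\r(r)^p\ho(r)\,dr\lesssim\int_0^1 Q_\r(r)^p\ho(r)\,dr=\int_0^1 M_\infty^p(r,Q_\r)\ho(r)\,dr\le\tfrac{\pi}{2}\|Q_\r\|_{A^p_\om}^p.
$$
The only mildly delicate point in the whole argument is the pointwise estimate \eqref{9.}; it reduces to the elementary interval comparisons above rather than any deeper regularity of $\phi_\r$.
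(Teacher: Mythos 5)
Your argument is correct, and for the decisive upper bound in \eqref{12.} it takes a genuinely different route from the paper; the other two pieces coincide in substance with the paper's proof. Indeed, the estimate \eqref{9.} is obtained in the paper by integrating over $[r,1)$ and using the monotonicity of $M_q(\cdot,g_\r')$, exactly the idea you implement on $[t,(1+t)/2]$, and the lower bound in \eqref{12.} is, as in the paper, just \eqref{9.} combined with $Q_\r(r)\le M_\infty(r,Q_\r)$ and \eqref{le:minfty} (so the positivity of the Taylor coefficients, while true, is more than you need there). For the upper bound the paper does not invoke Theorem~\ref{th:gorro}: it picks $n$ with $n-\frac1p-\frac{q-1}{p-q}>0$, applies the Littlewood--Paley formula \eqref{LPn} from \cite{PavP}, estimates $M_p(r,Q_\r^{(n)})$ by Minkowski's inequality, and then runs Muckenhoupt's Theorems 1 and 2 of \cite{Muckenhoupt1972} by hand on the split integral $\int_0^r+\int_r^1$. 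You instead note that $\phi_\r\ge0$ gives $Q_\r=\hti(\phi_\r)$, that $\int_0^1\phi_\r^p\,\ho\,dt=\|g_\r'\|^s_{H\left(q,s,\ho_{s\left(1-\frac{1}{q}\right)}\right)}<\infty$ for each fixed $\r$, and then quote Theorem~\ref{th:gorro} to get $\|Q_\r\|^p_{A^p_\om}\lesssim\Mp(\om)^p\|\phi_\r\|^p_{L^p_{\ho}}$ with a constant independent of $\r$, which is exactly what \eqref{12.} requires. This shortcut is legitimate: Theorem~\ref{th:gorro} is established in Section~\ref{sublinear}, before the test functions enter, so there is no circularity; the only point worth making explicit is that its hypothesis \eqref{99} is indeed satisfied, since $\om\in\Mp$ gives $\ho^{-\frac1{p-1}}\in\R$ by Lemma~\ref{Lemma:u_p}, hence integrability. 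Both routes ultimately rest on the same weighted Hardy inequalities of Muckenhoupt; yours reuses the work already packaged in Theorem~\ref{th:gorro} and is shorter, while the paper's higher-order derivative computation keeps the lemma self-contained at the cost of repeating that machinery.
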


\begin{proof}
Clearly, $Q_\r\in\H(\D)$ for all $0<\r<1$. Moreover,
    \begin{equation*}
    \begin{split}
    Q_\r(r)&\ge\int_r^1\frac{\phi_\r(t)}{1-tr}\,dt
    \ge\frac{M_q^\frac{q}{p-q}(r,g_\r')}{1-r^2}\int_r^1(1-t)^{\frac{q-1}{p-q}}\,dt
    \asymp\phi_\r(r),\quad 0\le r<1.
    \end{split}
    \end{equation*}
This and \eqref{le:minfty} give
 \begin{equation}\label{j15}
    \|Q_\r\|_{A^p_\om}^p\gtrsim\int_0^1\phi_\r^p(t)\,\widehat{\om}(t)\,dt.
    \end{equation}
Since $\om\in\R$ by the assumption, $\om$ is comparable to the
differentiable weight $\frac{\widehat{\om}(r)}{1-r}$, and hence
$n$ consecutive applications of \cite[Theorem~1.1]{PavP} give
    \begin{equation}\label{LPn}
    \|f\|_{A^p_\om}^p\asymp\sum_{j=0}^{n-1}|f^{(j)}(0)|^p+\int_\D|f^{(n)}(z)|^p(1-|z|)^{np}\,\om(z)\,dA(z),\quad
    f\in\H(\D).
    \end{equation}
Now
    $$
    Q_\r^{(n)}(z)=n!\int_0^1\frac{t^n\phi_\r(t)}{(1-tz)^{n+1}}\,dt,
    $$
and so Minkowski's inequality in continuous form yields
    \begin{equation*}
    \begin{split}
    M_p(r,Q_\r^{(n)})&=\left(\frac{n!}{2\pi}\int_0^{2\pi}\left|\int_0^1\frac{\phi_\r(t)t^n}{(1-tre^{i\t})^{n+1}}\,dt\right|^p\,d\t\right)^\frac1p\\
    &\lesssim\int_0^1\phi_\r(t)\left(\int_0^{2\pi}\frac{d\t}{|1-tre^{i\t}|^{np+p}}\right)^\frac1p\,dt
    \asymp\int_0^1\frac{\phi_\r(t)}{(1-tr)^{n-\frac1p+1}}\,dt.
    \end{split}
    \end{equation*}
Choose now $n\in\N$ such that $n-\frac1p-\frac{q-1}{p-q}>0$. Then
    \begin{equation*}
    \begin{split}
    \int_0^r\frac{\phi_\r(t)}{(1-tr)^{n-\frac1p+1}}\,dt
    &\le
    M_q(r,g_\r')^\frac{q}{p-q}\int_0^r\frac{(1-t)^{\frac{q-1}{p-q}}}{(1-tr)^{n-\frac1p+1}}\,dt\\
    &\le M_q(r,g_\r')^\frac{q}{p-q}\int_0^r\frac{dt}{(1-tr)^{n-\frac1p+1-\frac{q-1}{p-q}}}
    \asymp\frac{\phi_\r(r)}{(1-r)^{n-\frac1p}}.
    \end{split}
    \end{equation*}
Moreover, since $\om\in\Mp$, by applying \cite[Theorem~2]{Muckenhoupt1972}, with
    $$
    U^{p}(x)=\left\{
        \begin{array}{cl}
        \frac{\widehat{\om}(x)}{(1-x)^p}, &   0\le x<1\\
        0, & x\ge1
        \end{array}\right.,
    $$
and
    $$
    V^{p}(x)=\left\{
        \begin{array}{cl}
        \widehat{\om}(x), &   0\le x<1\\
        0, & x\ge1
        \end{array}\right.,
    $$
we deduce
    \begin{equation*}
    \begin{split}
    &\int_0^1M_p^p(r,Q_\r^{(n)})(1-r)^{np}\,\om(r)\,dr\\
    &\lesssim\int_0^1\left(\left(\int_0^r+\int_r^1\right)\frac{\phi_\r(t)}{(1-tr)^{n-\frac1p+1}}\,dt\right)^p(1-r)^{np}\,\om(r)\,dr\\
    &\lesssim\int_0^1\phi_\r^p(r)(1-r)\om(r)\,dr
    +\int_0^1\left(\int_r^1\phi_\r(t)\,dt\right)^p\frac{\om(r)}{(1-r)^{p-1}}\,dr\\
    &\lesssim\int_0^1\phi_\r^p(r)\widehat{\om}(r)\,dr
    +\int_0^1\left(\int_r^1\phi_\r(t)\,dt\right)^p\frac{\widehat{\om}(r)}{(1-r)^{p}}\,dr\\
    &\lesssim\int_0^1\phi_\r^p(r)\widehat{\om}(r)\,dr<\infty,
    \end{split}
    \end{equation*}
which together with \eqref{LPn} and \eqref{j15} gives \eqref{12.}.
This finishes the proof.
\end{proof}

\subsection{Necessity}

First we deal with the case $1<p\le q<\infty$. Let
$g(z)=\sum_{k=0}^\infty b_kz^k$ be the Maclaurin series of $g$. By
Lemma~\ref{le:Mncomparable} there exists a positive constant
$B_2=B_2(\om)$ such that
    \begin{equation}\label{eq:j16}
    1\le \frac{M_{n+1}}{M_n}\le B_2, \quad n\in\N.
    \end{equation}
Let us consider the functions $\psi_{M_n, \om}$  and
$\varphi_{M_n, \om}=\frac{1}{\psi_{M_n, \om}}$ defined in Lemma
\ref{le:hg1}. For each $n\in\N$, we can find  a
$C^\infty$-function
 $\Phi_{M_n}:\mathbb{R}\to\mathbb{C}$ with $\supp\left(\Phi_{M_n}\right)\subset \left(\frac{1}{2}, 2B_2 \right)$, satisfying
    \begin{equation}\label{eq:nh1}
    \Phi_{M_n}(s)=\varphi_{M_n, \om}(s),\quad 1\le s\le B_2,
    \end{equation}
and such that, by using part Lemma~\ref{le:hg1}(iv), for each
$m\in\N$ there exists a constant $C=C(m)>0$ for which
    \begin{equation}\label{eq:nh2}
    |\Phi^{(m)}_{M_n}(s)|\le C M_n\om\left( S\left(a_{M_n}\right)\right)^{1/p},\quad s\in\mathbb{R},\quad n\in\N.
\end{equation}
In particular, by \eqref{eq:nh2} and Lemma~\ref{le:hg1}(ii), we
have
    \begin{equation}\label{eq:nh2-1}
    A_{\Phi_{M_n}}=\max_{s\in\mathbb{R}}|\Phi_{M_n}(s)|+\max_{s\in\mathbb{R}}|\Phi_{M_n}''(s)|\lesssim M_n\om\left( S\left(a_{M_n}\right)\right)^{1/p}.
    \end{equation}
Let us now consider the functions
  \begin{equation}\label{eq:fN}
    f_{M_n}(z)=\frac{1}{\left(M_n^{\gamma+1}\om\left( S\left(a_{M_n}\right)\right)\right)^{1/p}}\frac{1}{(1-a_{M_n}z)^{\frac{\gamma+1}{p}}}
    ,\quad z\in\D,\quad n\in\N,
    \end{equation}
where $\g>\max\{\g_0,p-1\}$ and $\g_0=\g_0(\om)>0$ is from
Lemma~\ref{Lemma:Zhu-type}. The $A^p_\om$-norms of the functions
$f_{M_n}$ are uniformly bounded by Lemma~\ref{Lemma:Zhu-type}.
Therefore
    $$
    \sup_{n\in\N}\|\hg(f_{M_n})\|_{A^q_\om}\lesssim \|\H_g\|_{\left(A^p_\om,A^q_\om\right)}<\infty
    $$
by the hypothesis. This together with Theorem~\ref{th:cesaro}(iv)
and \eqref{eq:nh2-1} implies
    \begin{equation}
    \begin{split}\label{eq:nh3}
    \|W_{M_n}^{\Phi_{M_n}}\ast \hg(f_{M_n})\|_{A^q_\om}
    &\lesssim A_{\Phi_{M_n}}\|\hg(f_{M_n})\|_{A^q_\om}\\
    &\lesssim \|\H_g\|_{\left(A^p_\om,A^q_\om\right)} M_n\om\left( S\left(a_{M_n}\right)\right)^{1/p}.
    \end{split}
    \end{equation}
On the other hand, bearing in mind the M.~Riesz
projection theorem, \eqref{eq:j16}, \eqref{eq:nh1}, \eqref{eq:fN} and
\eqref{eq:psialpha2}, we deduce
    \begin{align*}
    &\|W_{M_n}^{\Phi_{M_N}}\ast \hg(f_{M_n})\|_{A^q_\om}\\
    &\gtrsim \left\| \sum_{M_n\le k\le M_{n+1}-1}
(k+1)b_{k+1}\left(\int_0^1t^kf_{M_n}(t)\,dt\right)\Phi_{M_n}\left
(\frac{k}{M_n}\right )z^k\right\|_{A^q_\om}
\\ & = \left\| \sum_{M_n\le k\le M_{n+1}-1}
(k+1)b_{k+1}\left(\int_0^1t^kf_{M_n}(t)\,dt\right)\varphi_{M_n,\om}\left
(\frac{k}{M_n}\right )z^k \right \|_{A^q_\om}
\\ &  =\|\Delta_n^\om g'\|_{A^q_\om},
    \end{align*}
which together with \eqref{eq:nh3},  Lemma~\ref{le:eqno},
Lemma~\ref{le:condinte} and \eqref{rn} gives
    \begin{equation*}
    \begin{split}
    \|\Delta_n^\om g'\|_{H^q}
    &\lesssim\|\H_g\|_{\left(A^p_\om,A^q_\om\right)}{M_n}^{1-\frac{1}{p}}
    \left(\widehat{\om}\left(1-\frac1{M_n}\right)\right)^{\frac1p-\frac1q}\\
    &\lesssim\|\H_g\|_{\left(A^p_\om,A^q_\om\right)}{M_n}^{1-\frac{1}{p}}
    2^{-n\left(\frac1p-\frac1q\right)}.
    \end{split}\end{equation*}
Finally, Corollary~\ref{Lemma:q>p}(i) implies
$g\in\Lambda\left(q, \frac1p,\ho^{\frac1p-\frac1q}\right)$ and
    $$
    \|g-g(0)\|_{\Lambda\left(q, \frac1p,\ho^{\frac1p-\frac1q}\right)}
    \lesssim \|\H_g\|_{\left(A^p_\om,A^q_\om\right)}.
    $$

Let now $1<q<p<\infty$ and assume that $\H_g:A^p_\om\to A^q_\om$
is bounded. Let $\{\phi_\r\}$ and  $\{Q_\r\}$ be the families of
functions considered in Lemma~\ref{testfunctionsq<p}. Since each
$Q_\r$ is increasing on $[0,1)$, Lemma~\ref{le:Mncomparable} gives
    \begin{equation*}
    \int_0^1t^{M_n}Q_\r(t)\,dt\asymp\int_0^1t^{M_{n+1}}Q_\r(t)\,dt,\quad
    n\in\N.
    \end{equation*}
So, Theorem~\ref{th:dec1},  Lemma~\ref{le:A} and Lemma~\ref{le:4}
imply
    \begin{equation}\label{eq:j20}
    \begin{split}
    \|\H_{g_\r}(Q_\r)\|_{A^q_\om}^q
    &\asymp\sum_{n=0}^\infty 2^{-n}
    \|\Delta^{\om}_n \H_{g_\r}(Q_\r)\|_{H^q}^q\\
    &\asymp\sum_{n=0}^\infty 2^{-n}\left(\int_0^1t^{M_n}Q_\r(t)\,dt\right)^q
    \|\Delta^{\om}_n g_\r'\|_{H^q}^q\\
    &\asymp\sum_{n=0}^\infty 2^{-n}\left(\int_0^1t^{M_n}Q_\r(t)\,dt\right)^q
    M_q^q\left(1-\frac1{M_{n+1}},\Delta^{\om}_n g_\r'\right).
    \end{split}
    \end{equation}
Since $Q_\r$ is increasing on $[0,1)$, \eqref{9.}, the M.~Riesz
projection theorem and Lemma~\ref{le:4} yield
    \begin{equation}\label{eq:j21}
    \begin{split}
    \int_0^1t^{M_n}Q_\r(t)\,dt
    &\gtrsim\frac{Q_\r\left(1-\frac1{M_{n+1}}\right)}{M_{n+1}}\\
    &\gtrsim \frac{M_q^\frac{q}{p-q}\left(1-\frac1{M_{n+1}},g_\r'\right)}{M_{n+1}^{1+\frac{q-1}{p-q}}}
    \gtrsim\frac{\|\De_n^\om
    g_\r'\|_{H^q}^\frac{q}{p-q}}{M_{n+1}^{\frac{p-1}{p-q}}}.
    \end{split}
    \end{equation}
So, by combining \eqref{eq:j20}, \eqref{eq:j21},
Corollary~\ref{co:mixtogamma} and Lemma~\ref{testfunctionsq<p}, we
obtain
    \begin{equation}\label{eq:j22}
    \begin{split}
    \|\H_{g_\r}(Q_\r)\|_{A^q_\om}^q
    &\gtrsim\sum_{n=0}^\infty 2^{-n}\frac{1}{M_{n+1}^{q\frac{p-1}{p-q}}}\|\De_n^\om
    g_\r'\|_{H^q}^\frac{pq}{p-q}\\
    &\asymp \int_0^1 M^s_q(r,g_\r')(1-r)^{\left(1-\frac1p\right)s}\om(r)\,dr\\
    &\asymp\int_0^1\phi_\r^p(r)\,\widehat{\om}(r)\,dr\asymp\|Q_\r\|_{A^p_\om}^p.
    \end{split}
    \end{equation}
Further, \eqref{eq:j20} yields
    $$
    \|\H_{g_\r}(Q_\r)\|_{A^q_\om}^q\le C \|\H_{g}(Q_\r)\|_{A^q_\om}^q,\quad 0<\r<1,
    $$
where $C$ does not depend on $\r$. This together with
\eqref{eq:j22} gives
    \begin{equation*}
    \begin{split}
    \infty&>\|\H_g\|^q_{\left(A^p_\om,A^q_\om\right)} \ge
    \frac{\|\H_{g}(Q_\r)\|_{A^q_\om}^q}{\|Q_\r\|_{A^p_\om}^q} \gtrsim
    \frac{\|\H_{g_\r}(Q_\r)\|_{A^q_\om}^q}{\|Q_\r\|_{A^p_\om}^q}\\
    &\gtrsim \|Q_\r\|_{A^p_\om}^{p-q}
    \asymp\|g_\r'\|^q_{H\left(q,s,\ho_{s\left(1-\frac1q\right)}\right)},
    \end{split}
    \end{equation*}
so, by letting $\r\to 1^-$, we deduce
    $$
    \|\H_g\|^q_{\left(A^p_\om,A^q_\om\right)}\gtrsim
    \|g'\|^q_{H\left(q,s,\ho_{s\left(1-\frac1q\right)}\right)}.
    $$
This finishes the proof.

\section{Compact and Hilbert-Schmidt  operators}\label{compact}

\subsection{Compactness}\label{compactness}

The main objective of this section is to prove the following
result.

\begin{theorem}\label{mainthcompact}
Let $1<p,q<\infty$, $\om\in\R\cap\Mp$ and $g\in \H(\D)$.
    \begin{enumerate}
    \item[\rm(i)] If $1<p\le q<\infty$, then $\hg: A^p_\om \to A^q_\om$ is compact if and only if
    $g\in \lambda\left(q,\frac{1}{p}, \ho^{\frac{1}{p}-\frac{1}{q}}\right)$.
    \item[\rm(ii)] If $1<q<p<\infty$, then $\H_g:A^p_\om\to A^q_\om$ is
    compact if and only it is bounded.
    \end{enumerate}
\end{theorem}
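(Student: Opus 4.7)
The plan is to handle both parts via a single polynomial-truncation scheme, combined with the test-function mechanism already set up for Theorem~\ref{mainth}. For each $N\in\N$, write
\[
T_N g(z)=\sum_{k<M_N}b_k z^k,\qquad g(z)=\sum_{k\ge 0}b_k z^k,
\]
so that the identity for $\H_g$ in terms of Taylor coefficients shows that $\H_{T_N g}$ sends $A^p_\om$ into the space of polynomials of degree at most $M_N-2$, hence is a finite-rank, in particular compact, operator from $A^p_\om$ to $A^q_\om$. Thus, in both parts, it suffices to show $\|\H_{g-T_N g}\|_{(A^p_\om,A^q_\om)}\to 0$ as $N\to\infty$.

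For part~(i), sufficiency is clean: if $g\in\lambda(q,\tfrac1p,\ho^{\frac1p-\frac1q})$, then Theorem~\ref{mainth}(i) applied to the symbol $g-T_N g$ (which vanishes at the origin) gives
\[
\|\H_{g-T_N g}\|_{(A^p_\om,A^q_\om)}\asymp\|g-T_N g\|_{\Lambda(q,\frac1p,\ho^{\frac1p-\frac1q})}.
\]
Since $\Delta_n^\om(g-T_N g)=0$ for $n<N$ and equals $\Delta_n^\om g$ for $n\ge N$, Corollary~\ref{Lemma:q>p}(i) identifies this, up to constants, with $\sup_{n\ge N}\|\Delta_n^\om g'\|_{H^q}\,2^{n(\frac1p-\frac1q)}/M_n^{1-\frac1p}$, which vanishes as $N\to\infty$ by Corollary~\ref{Lemma:q>p}(ii). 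For the necessity, take the test functions $f_{M_n}$ of \eqref{eq:fN}. By Lemma~\ref{Lemma:Zhu-type} they are uniformly bounded in $A^p_\om$, and for $\gamma$ chosen large enough they converge to $0$ uniformly on compact subsets of $\D$, hence $f_{M_n}\rightharpoonup 0$ weakly in the reflexive space $A^p_\om$. Compactness of $\H_g$ yields $\|\H_g(f_{M_n})\|_{A^q_\om}\to 0$, and tracking the constants in the necessity argument of Theorem~\ref{mainth} (the smoothing convolution with $W_{M_n}^{\Phi_{M_n}}$ from Theorem~\ref{th:cesaro} followed by Lemma~\ref{le:eqno}) gives
\[
\frac{\|\Delta_n^\om g'\|_{H^q}}{M_n^{1-\frac1p}\,2^{-n(\frac1p-\frac1q)}}\lesssim\|\H_g(f_{M_n})\|_{A^q_\om}\to 0,
\]
so Corollary~\ref{Lemma:q>p}(ii) places $g$ in $\lambda(q,\tfrac1p,\ho^{\frac1p-\frac1q})$.

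For part~(ii), compactness trivially implies boundedness, so assume instead that $\H_g:A^p_\om\to A^q_\om$ is bounded. Theorem~\ref{mainth}(ii) gives $g'\in H(q,s,\ho_{s(1-\frac1q)})$ with $\frac1s=\frac1q-\frac1p$. Since $\om\in\R$ yields $\ho(r)\asymp(1-r)\om(r)$, the norm of $H(q,s,\ho_{s(1-\frac1q)})$ is comparable to that of $H(q,s,\om_{s(1-\frac1q)+1})$, and Corollary~\ref{co:mixtogamma} applied with $\gamma=s(1-\tfrac1q)+1$ gives
\[
\|(g-T_N g)'\|_{H(q,s,\ho_{s(1-\frac1q)})}^s\asymp\sum_{n\ge N}2^{-n}\,\frac{\|\Delta_n^\om g'\|_{H^q}^s}{M_n^{s(1-\frac1q)+1}},
\]
which is the tail of a convergent series and therefore tends to $0$ as $N\to\infty$. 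Theorem~\ref{mainth}(ii) then gives $\|\H_{g-T_N g}\|_{(A^p_\om,A^q_\om)}\to 0$, so $\H_g$ is the operator-norm limit of the compact $\H_{T_N g}$ and is itself compact.

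The main technical delicacy lies in the necessity part of~(i): one must upgrade the \emph{big-O} conclusion from the proof of Theorem~\ref{mainth} to the \emph{little-o} behavior needed for membership in $\lambda$. This is achieved by replacing the uniform bound $\|\H_g(f_{M_n})\|_{A^q_\om}\lesssim\|\H_g\|$ used there with the vanishing $\|\H_g(f_{M_n})\|_{A^q_\om}\to 0$ provided by weak convergence of $\{f_{M_n}\}$ to zero together with compactness of $\H_g$. The verification that, for $\gamma$ sufficiently large, the sequence $\{f_{M_n}\}$ really does converge uniformly to zero on compacta --- the fact underlying its weak null property --- requires the size estimates on $\om(S(a_{M_n}))$ coming from Lemma~\ref{Lemma:Zhu-type} and is where the argument feels most delicate.
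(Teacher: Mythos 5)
Your argument is correct, but for the sufficiency directions it takes a genuinely different route from the paper. The paper never uses finite-rank approximation: it proves compactness directly through the sequential criterion of its Lemma~\ref{le:c12}, splitting $\sum_n2^{-n}\|\Delta^\om_n\hg(f_j)\|_{H^q}^q$ at an index $n_0$ dictated by the little-o hypothesis (in (i)) or by the tail of the convergent series from Corollary~\ref{co:mixtogamma} (in (ii)), bounding the head by $\left(\int_0^1|f_j(t)|\,dt\right)^p\to0$ via Lemma~\ref{le:c1} and the tail by $\ep K^q$ via the sublinear Hilbert operator. You instead truncate the symbol, note that $\H_{T_Ng}$ is finite rank (bounded because \eqref{99} holds for $\om\in\Mp$), and deduce $\|\H_{g-T_Ng}\|_{(A^p_\om,A^q_\om)}\to0$ from the two-sided norm estimates of Theorem~\ref{mainth} combined with Corollaries~\ref{Lemma:q>p} and \ref{co:mixtogamma}; this is shorter and reuses the boundedness theorem as a black box, at the cost of relying on the comparison constants in Theorem~\ref{mainth} being independent of the symbol (which is what the theorem asserts). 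The necessity in (i) is essentially the paper's argument with the same test functions; your appeal to reflexivity and weak null convergence replaces the paper's Lemma~\ref{le:c12} and needs only the remark that point evaluations are bounded on $A^p_\om$, so boundedness plus uniform convergence on compacta forces the weak limit to be $0$. Two small points to tidy when writing this up: since $\H_g$ involves $g'$, the blocks of $(g-T_Ng)'$ are not exactly $\Delta^\om_ng'$ for $n\ge N$ and $0$ otherwise --- the truncation leaves the single monomial $M_Nb_{M_N}z^{M_N-1}$ in the block $\Delta^\om_{N-1}$, whose contribution is dominated by $\|\Delta^\om_{N-1}g'\|_{H^q}$ and hence harmless; and in part (ii) one should record that $s\left(1-\frac1q\right)+1=s\left(1-\frac1p\right)$ (because $\frac1s=\frac1q-\frac1p$) together with $\ho(r)\asymp(1-r)\om(r)$, so that Corollary~\ref{co:mixtogamma} applies with exactly the weight $\ho_{s\left(1-\frac1q\right)}$ appearing in Theorem~\ref{mainth}(ii).
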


We will need the following lemma, which can be easily proved by
using \eqref{99}, H\"{o}lder's inequality and \eqref{le:minfty}.

\begin{lemma}\label{le:c1}
Let $1<p<\infty$ and let $\om$ be a radial weight such that
\eqref{99} is satisfied. Let $\{f_j\} _{j=1}^\infty $ be a
sequence in $A^p_\om$ such that
$\sup_{j}\|f_j\|_{A^p_\om}=K<\infty$ and $f_j\to 0$, as
$j\to\infty $, uniformly on compact subsets of\, $\D$. Then the
following assertions hold:
\begin{itemize}
\item[\rm(i)] $\lim_{j\to\infty}\int_0^1 |f_j(t)|\,dt=0$;
\item[\rm(ii)] $\mathcal H_g(f_j)\to 0$, as $j\to\infty$,
uniformly on compact subsets of $\D$ for each $g\in\H(\D)$.
\end{itemize}
\end{lemma}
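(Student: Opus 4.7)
The plan is routine, as the hint suggests. For (i), I would split the integral as $\int_0^1|f_j(t)|\,dt = \int_0^{r_0}|f_j(t)|\,dt + \int_{r_0}^1|f_j(t)|\,dt$ at some $r_0\in(0,1)$ to be chosen. The first piece tends to $0$ as $j\to\infty$ for each fixed $r_0$, since $f_j\to 0$ uniformly on the compact set $[0,r_0]$. For the tail, since $|f_j(t)|\le M_\infty(t,f_j)$ for $t\in[0,1)$, H\"older's inequality with exponents $p,p'$ gives
$$\int_{r_0}^1|f_j(t)|\,dt\le\left(\int_{r_0}^1M_\infty^p(t,f_j)\,\widehat{\om}(t)\,dt\right)^{1/p}\left(\int_{r_0}^1\widehat{\om}(t)^{-p'/p}\,dt\right)^{1/p'}.$$
The first factor is bounded above by $(\pi/2)^{1/p}K$ uniformly in $j$ by \eqref{le:minfty}, and the second can be made arbitrarily small as $r_0\to 1^-$ by the convergence of the integral in \eqref{99}. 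Thus, given $\varepsilon>0$, I would first fix $r_0$ close enough to $1$ so that the tail term is less than $\varepsilon/2$ for every $j$, and then pick $J$ so large that the initial piece is less than $\varepsilon/2$ for all $j\ge J$.

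For (ii), the plan is to reduce everything to (i). Given a compact set $E\subset\D$, choose $R\in(0,1)$ with $E\subset\{|z|\le R\}$. Then $|g'(tz)|\le C_R:=\max_{|w|\le R}|g'(w)|<\infty$ for all $t\in[0,1]$ and $z\in E$, so the integral representation \eqref{H-g} gives
$$\sup_{z\in E}|\H_g(f_j)(z)|\le C_R\int_0^1|f_j(t)|\,dt,$$
and part (i) finishes the job.

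There is no genuine obstacle: the only subtlety is arranging the order of quantifiers in (i) (choose $r_0$ first using the uniform $A^p_\om$-bound, then use pointwise convergence on $[0,r_0]$), which is standard.
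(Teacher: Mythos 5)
Your argument is correct and follows exactly the route the paper indicates: the lemma is stated there without a written proof, with the remark that it follows easily from \eqref{99}, H\"older's inequality and \eqref{le:minfty}, which is precisely your splitting at $r_0$, the H\"older estimate with exponents $p,p'$ (noting $p'/p=1/(p-1)$ so the tail factor is controlled by \eqref{99}), and the uniform bound $(\pi/2)^{1/p}K$ from \eqref{le:minfty}. Part (ii) via $\sup_{z\in E}|\H_g(f_j)(z)|\le\max_{|w|\le R}|g'(w)|\int_0^1|f_j(t)|\,dt$ is the standard reduction to (i) and is fine.
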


Next, we remind the reader that for $\om\in\I\cup\R$, the norm
convergence in $A^p_\om$ implies the uniform convergence on
compact subsets of $\D$ by \cite[Lemma~2.5]{PelRat}. This fact and
Lemma~\ref{le:c1} are the key tools in the proof of the following
result whose proof will be omitted.

\begin{lemma}\label{le:c12} Let $1<p<\infty$, $0<q<\infty$ and $\om\in\I\cup\R$ such that \eqref{99} is satisfied, and let $g\in \H(\D)$.
Then the following conditions are equivalent:
\begin{itemize}\item[\rm(i)] $\hg: A^p_\om\to A^q_\om$ is compact;
\item[\rm(ii)] For each sequence $\{f_j\}_{j=1}^\infty $ in
$A^p_\om$ for which
    \begin{equation}\label{eq:com1}
    \sup_{k}\|f_j\|_{A^p_\om}=K <\infty
    \end{equation}
and
    \begin{equation}\label{eq:com2}
    f_j\to 0, \,\text{as $j\to\infty $,\, uniformly on compact subsets
    of $\D$},
    \end{equation}
we have $\lim_{j\to\infty}\|\mathcal H_g(f_j)\|_{A^q_\om}=0$.
\end{itemize}
\end{lemma}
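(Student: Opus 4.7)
The plan is to prove the equivalence by the standard normal-family argument, using that $A^p_\om$-norm controls pointwise values on compact subsets of $\D$ (this is the content of \cite[Lemma~2.5]{PelRat} mentioned right before the statement), together with Lemma~\ref{le:c1}.

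\textbf{(i)$\Rightarrow$(ii).} Suppose $\hg:A^p_\om\to A^q_\om$ is compact and let $\{f_j\}$ satisfy \eqref{eq:com1} and \eqref{eq:com2}. By compactness, every subsequence of $\{\hg(f_j)\}$ admits a further subsequence, say $\{\hg(f_{j_k})\}$, that converges to some $F\in A^q_\om$ in norm. Since norm convergence in $A^q_\om$ implies uniform convergence on compact subsets of $\D$, we obtain that $\hg(f_{j_k})\to F$ pointwise in $\D$. On the other hand, \eqref{eq:com1}, \eqref{eq:com2} and Lemma~\ref{le:c1}(ii) give $\hg(f_j)\to 0$ uniformly on compact subsets of $\D$, so $F\equiv 0$. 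Since every subsequence of $\{\|\hg(f_j)\|_{A^q_\om}\}$ has a subsequence tending to $0$, the whole sequence does.

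\textbf{(ii)$\Rightarrow$(i).} Let $\{f_j\}\subset A^p_\om$ be a bounded sequence; say $\|f_j\|_{A^p_\om}\le K$. Using once more that the $A^p_\om$-norm dominates pointwise values on compact subsets of $\D$, we see that $\{f_j\}$ is locally uniformly bounded on $\D$. By Montel's theorem, there is a subsequence $\{f_{j_k}\}$ and a function $f\in\H(\D)$ such that $f_{j_k}\to f$ uniformly on compact subsets of $\D$. Fatou's lemma then yields $f\in A^p_\om$ with $\|f\|_{A^p_\om}\le K$. Consequently the sequence $h_k=f_{j_k}-f$ lies in $A^p_\om$, is uniformly bounded there, and converges to $0$ uniformly on compact subsets of $\D$. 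By the linearity of $\hg$ and the hypothesis (ii),
\begin{equation*}
\|\hg(f_{j_k})-\hg(f)\|_{A^q_\om}=\|\hg(h_k)\|_{A^q_\om}\longrightarrow 0,\quad k\to\infty,
\end{equation*}
so $\{\hg(f_{j_k})\}$ converges in $A^q_\om$. This shows $\hg$ is compact.

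The only point that requires slight care is the verification that $f\in A^p_\om$ in the backward direction and the identification of the limit $F\equiv 0$ in the forward direction; both are immediate from the fact, recalled just before the statement, that $A^p_\om$-norm convergence forces local uniform convergence, together with Fatou's lemma. All other steps are formal, so no substantial obstacle is expected.
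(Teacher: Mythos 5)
Your argument is correct and is precisely the proof the paper has in mind: the paper omits it but states that the key tools are Lemma~\ref{le:c1} and the fact (from \cite[Lemma~2.5]{PelRat}) that norm convergence in these weighted Bergman spaces implies uniform convergence on compact subsets, which is exactly the standard Montel/Fatou normal-family argument you carry out. No gap to report; at most one could remark that in (ii)$\Rightarrow$(i) the membership $\hg(f)\in A^q_\om$ follows automatically once $\|\hg(f_{j_k})-\hg(f)\|_{A^q_\om}\to0$ with $\hg(f_{j_k})\in A^q_\om$, which is implicit in how the lemma is applied in the paper.
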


\medskip

\noindent\emph{Proof of} Theorem~\ref{mainthcompact}. (i). Assume
first that $\hg:A^p_\om\to A^q_\om$ is compact. Let
$\{f_{M_n}\}_{n=0}^\infty$ be the family of test functions
    $$
    f_{M_n}(z)=\frac{1}{\left(M_n^{\gamma+1}\om\left( S\left(a_{M_n}\right)\right)\right)^{1/p}}
    \frac{1}{(1-a_{M_n}z)^{\frac{\gamma+1}{p}}},\quad z\in\D,\quad n\in\N,
    $$
considered in \eqref{eq:fN}. If $\g$ is large enough,
Lemma~\ref{Lemma:Zhu-type} ensures that $\{f_{M_n}\}_{n=0}^\infty$
satisfies \eqref{eq:com1}. Now the proof of
Lemma~\cite[Lemma~1.1]{PelRat} shows that
$\lim_{|a|\to1^-}\frac{(1-|a|)^{\gamma}}{\widehat{\om}(|a|)}=0$,
if $\gamma>0$ is again large enough. So, if $\gamma$ is fixed
appropriately, then
    $$
    \lim_{n\to\infty}f_{M_n}(z)=\lim_{n\to\infty}
    \frac{1}{\left({M_n}^{\gamma}\widehat{\om}\left(1-\frac{1}{M_n}\right)\right)^{1/p}}\frac{1}{(1-a_{M_n}z)^{\frac{\gamma+1}{p}}}=0
    $$
uniformly on compact subsets of $\D$. Thus
$\{f_{M_n}\}_{n=0}^\infty$ satisfies \eqref{eq:com2}. Therefore
Lemma~\ref{le:c12} implies
    \begin{equation}\label{eq:c1}
    \lim_{n\to\infty}\|\mathcal H_g(f_{{M_n}})\|_{A^p_\om}=0.
    \end{equation}
Next, a careful inspection of the proof of the necessity part of
Theorem~\ref{mainth} reveals the inequalities
    \begin{equation*}
    \begin{split}
    \|\Delta_n^\om g'\|_{H^q}&\lesssim \|\hg(f_{M_n})\|_{A^q_\om} {M_n}^{1-\frac{1}{p}}\widehat{\om}\left(1-\frac{1}{M_n}\right)^{\frac1p-\frac1q}\\
    &\lesssim\|\hg(f_{M_n})\|_{A^q_\om}{M_n}^{1-\frac{1}{p}} 2^{-n\left(\frac1p-\frac1q\right)},\quad n\in\N.
    \end{split}
    \end{equation*}
Finally, \eqref{eq:c1} and Corollary~\ref{Lemma:q>p}(ii) imply
$g\in\lambda\left(q, \frac1p,\ho^{\frac1p-\frac1q}\right)$.

Conversely, let $\ep>0$ and $g\in  \lambda\left(q,
\frac1p,\ho^{\frac1p-\frac1q}\right)$. Then there exists
$r_0=r_0(\e)\in[0,1)$ such that
    \begin{equation}\label{eq:c3}
    M_q^q(r,g')\le\e\frac{\ho^{\frac{q}{p}-1}(r)}{(1-r)^{q(1-\frac1p)}},\quad r\ge r_0.
    \end{equation}
Let now $k_0$ be the integer which appears in the proof of the
sufficiency part of Theorem~\ref{mainth}, and choose $n_0\ge k_0$,
such that
    \begin{equation}\label{eq:c5}
    1-\frac{1}{M_{n+1}}\ge r_0, \quad n\ge n_0.
    \end{equation}
Let $\{f_j\}$ be a sequence of analytic functions in $\D$
satisfying \eqref{eq:com1} and \eqref{eq:com2}. By arguing as in
the proof of Theorem~\ref{mainth} and bearing in mind that
$\lambda \left(q,\frac{1}{p},\ho^{\frac1p-\frac1q}\right)\subset
\Lambda \left(q,\frac{1}{p},\ho^{\frac1p-\frac1q}\right)$, we
deduce
    \begin{equation}
    \begin{split}\label{eq:c4}
    \sum_{n=0}^{n_0}2^{-n}\left\|\Delta^\om_n\hg(f_j)\right\|^q_{H^q}
    &\lesssim\|f_j\|^{q-p}_{A^p_\om}\sum_{n=0}^{n_0} 2^{-n} \left(\int_0^1t^{\frac{M_n}{4}}|f_j(t)|\,dt\right)^p M^{p-1}_{n+1}\\
    &\lesssim \left(\int_0^1 |f_j(t)|\,dt\right)^p,
    \end{split}
    \end{equation}
where the constants of comparison depend on $g$, $p$, $\om$, $K$  and
$n_0$.

On the other hand, an analogous reasoning to that in
\eqref{eq:sb3}, \eqref{eq:c3}, \eqref{eq:c5}, \eqref{5.} and
\eqref{eq:com1} give
    \begin{equation*}
    \begin{split}
    \left\|\Delta^\om_n\hg(f_j)\right\|^q_{H^q}
    &\lesssim\left(\int_0^1 t^{\frac{M_n}{4}}|f_j(t)|\,dt\right)^q\left\|\Delta^\om_n
    g'\right\|^q_{H^q}\\
    &\lesssim \ep \|f_j\|^{q-p}_{A^p_\om}\left(\int_0^1t^{\frac{M_n}{4}}|f_j(t)|\,dt\right)^p M^{p-1}_{n+1}\\
    &\le\ep K^{q-p}\left(\int_0^1 t^{\frac{M_n}{4}}|f_j(t)|\,dt\right)^p
    M^{p-1}_{n+1},
    \end{split}
    \end{equation*}
so bearing in mind that $n_0\ge k_0$, Corollary~\ref{hilbertapw},
\eqref{eq:com1} and following the proof of Theorem~\ref{mainth},
we get
    \begin{equation*}
    \begin{split}
    \sum_{n=n_0+1}^\infty 2^{-n}\left\|\Delta^\om_n\hg(f_j)\right\|^q_{H^q}
    &\lesssim \ep K^{q-p}\left\|\hti(f_j)\right\|^p_{A^p_\om}
    \lesssim \ep K^{q-p}\|f_j\|^p_{A^p_\om}\lesssim \ep K^q.
\end{split}
\end{equation*}
This together with \eqref{eq:sb1}, \eqref{eq:c4} and
Lemma~\ref{le:c1}, imply
    \begin{equation*}
    \begin{split}
    \lim_{j\to\infty}\left\|\hg(f_j)\right\|^p_{A^p_\om}
    \lesssim&\lim_{j\to\infty}\left( \left(\int_0^1 |f_j(t)|\,dt\right)^p+\ep K^q\right)
    \lesssim \ep K^q.
    \end{split}
    \end{equation*}
Since $\ep>0$ is arbitrary, Lemma~\ref{le:c12} shows that
$\H_g:A^p_\om\to A^q_\om$ is compact.

(ii). By Theorem~\ref{mainth}(ii) it is enough to show that
$\H_g:A^p_\om\to A^q_\om$ is compact if $g'\in
H(q,s,\widehat{\om}^{s(1-\frac1q)})$. Let $\{f_j\}$ be a sequence
of analytic functions in $\D$ satisfying \eqref{eq:com1} and
\eqref{eq:com2}. Let $\ep>0$ be given. By the proof of
Corollary~\ref{co:mixtogamma}, there exists $n_0\in\N$ such that
    \begin{equation*}
    \begin{split}
    \left[\sum_{n=n_0}^\infty2^{-n}\frac{\left\|\Delta^\om_n g'\right\|^s_{H^q}}{M_n^{\left(1-\frac1p\right)s}}\right]^{1-\frac{q}{p}}
    &<\ep
    \end{split}
    \end{equation*}
for all $n\ge n_0$. By H\"{o}lder's inequality, \eqref{eq:com1}
and a reasoning similar to that in the proof of
Theorem~\ref{mainth}, we obtain
    \begin{equation*}
    \begin{split}
    &\sum_{n=n_0}^\infty2^{-n} \left(\int_0^1t^{\frac{M_n}{4}}|f_j(t)|\,dt\right)^q\left\|\Delta^\om_n g'\right\|^q_{H^q}\\
    &\le\left[\sum_{n=n_0}^\infty2^{-n} M_n^{p-1}\left(\int_0^1t^{\frac{M_n}{4}}|f_j(t)|\,dt\right)^p\right]^{\frac{q}{p}}
    \left[\sum_{n=n_0}^\infty2^{-n} \frac{\left\|\Delta^\om_n g'\right\|^s_{H^q}}{M_n^{\left(1-\frac1p\right)s}}\right]^{1-\frac{q}{p}}\\
    &\lesssim \ep\left[\sum_{n=1}^\infty2^{-n} M_n^{p-1}\left(\int_0^1t^{\frac{M_n}{4}}|f_j(t)|\,dt\right)^p\right]^{\frac{q}{p}}\\
    &\lesssim \ep \left(\|f_j\|^q_{A^p_\om}+ \|\hti(f_j)\|^q_{A^p_\om}\right)
    \lesssim \ep \|f_j\|^q_{A^p_\om}
    \lesssim \ep K^q.
    \end{split}
    \end{equation*}
On the other hand, \eqref{eq:sb2} and Lemma~\ref{le:n14} give
    $$
    \sum_{n=0}^{n_0-1}2^{-n}\left\|\Delta^\om_n\hg(f)\right\|^q_{H^q}
    \lesssim \left(\int_0^1|f_j(t)|\,dt\right)^q\sum_{n=0}^{n_0-1}\|\Delta_n^\om g'\|_{H^q}^q,
    $$
and hence  by \eqref{eq:sb1} and Lemma~\ref{le:n14}
    \begin{equation*}
    \begin{split}
    &\left\|\hg(f_j)\right\|^q_{A^q_\om}\\
    &\lesssim\sum_{n=0}^{n_0-1}2^{-n}\left\|\Delta^\om_n\hg(f_j)\right\|^q_{H^q}
    +\sum_{n=n_0}^\infty2^{-n}\left(\int_0^1t^{\frac{M_n}{4}}|f_j(t)|\,dt\right)^q\left\|\Delta^\om_n
    g'\right\|^q_{H^q}\\
    &\lesssim \left(\int_0^1|f_j(t)|\,dt\right)^q\sum_{n=0}^{n_0-1}\|\Delta_n^\om g'\|_{H^q}^q+\ep K^q.
    \end{split}
    \end{equation*}
Finally, since $\ep>0$ is arbitrary and $n_0\in\N$ fixed,
Lemma~\ref{le:c1} gives
    $$
    \lim_{j\to\infty } \left\|\hg(f_j)\right\|_{A^q_\om}=0,
    $$
which together with Lemma \ref{le:c12} finishes the proof.
\hfill$\Box$

\subsection{Hilbert-Schmidt operators}

In this section we offer a characterization of those symbols $g$
for which the operator $\mathcal{H}_g$ is Hilbert-Schmidt on
$A^2_\om$, where $\om\in\R\cap\mathcal{M}_2$. Recall that the
classical \emph{Dirichlet space} consists of those functions
$g\in\H(\D)$ for which
    \begin{displaymath}
    \|g\|_{\mathcal{D}}^2=
    |g(0)|^2+\int_{\D}|g'(z)|^2\,dA(z)<\infty.
    \end{displaymath}

\begin{theorem}\label{th:sachatten}
Let $\om\in\R\cap\mathcal{M}_2$ and $g\in\H(\D)$. Then
$\mathcal{H}_g$ is Hilbert-Schmidt on $A^2_\om$ if and only if
$g\in\mathcal{D}$.
\end{theorem}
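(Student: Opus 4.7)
The plan is to compute $\|\mathcal{H}_g\|_{HS}^2$ directly in the orthonormal basis of monomials and then identify it with the Dirichlet seminorm of $g$ up to multiplicative constants. Since $\om$ is radial, the normalized monomials $e_n(z)=z^n/\sqrt{\om_n}$, with $\om_n=\|z^n\|_{A^2_\om}^2=2\int_0^1 r^{2n+1}\om(r)\,dr$, form an orthonormal basis of $A^2_\om$. Writing $g(z)=\sum_k b_k z^k$, the coefficient formula for $\mathcal{H}_g$ from Section~\ref{sublinear} gives
\begin{equation*}
\mathcal{H}_g(e_n)(z)=\frac{1}{\sqrt{\om_n}}\sum_{k=0}^\infty \frac{(k+1)b_{k+1}}{n+k+1}\,z^k,
\end{equation*}
so that by orthogonality of the monomials and Tonelli's theorem
\begin{equation*}
\|\mathcal{H}_g\|_{HS}^2=\sum_{n=0}^\infty \|\mathcal{H}_g(e_n)\|_{A^2_\om}^2=\sum_{m=1}^\infty m^2|b_m|^2\,\om_{m-1}\,S_m,\qquad S_m=\sum_{n=0}^\infty\frac{1}{(n+m)^2\om_n}.
\end{equation*}

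The core of the argument is the two-sided estimate $S_m\asymp 1/(m\,\ho(1-1/m))$ for $m\ge 2$. By Lemma~\ref{le:condinte}(iv) one has $\om_n\asymp\ho(1-1/n)$ for $n\ge 1$, so $S_m$ compares to an integral weighted by $\ho^{-1}$. I would split at $n=m$. On $\{n\ge m\}$ one has $(n+m)^2\asymp n^2$, and a sum-to-integral comparison reduces the tail to $\int_{1-1/m}^1\ho^{-1}(r)\,dr$; Lemma~\ref{Lemma:u_p} (which encodes the equivalence $\om\in\mathcal{M}_2\Leftrightarrow\ho^{-1}\in\R$) then yields the asymptotic $\asymp(1-r)\ho^{-1}(r)\big|_{r=1-1/m}=1/(m\,\ho(1-1/m))$. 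On $\{n<m\}$ one has $(n+m)^2\asymp m^2$, and the head reduces to $m^{-2}\int_0^{1-1/m}(1-r)^{-2}\ho^{-1}(r)\,dr$; integrating by parts with $d[1/(1-r)]=(1-r)^{-2}\,dr$ and using $\om/\ho\asymp 1/(1-r)$ from $\om\in\R$ gives
\begin{equation*}
\int_0^{1-1/m}\frac{\ho^{-1}(r)}{(1-r)^2}\,dr=\frac{m}{\ho(1-1/m)}-1-\int_0^{1-1/m}\frac{\om(r)}{(1-r)\ho(r)^2}\,dr,
\end{equation*}
in which the residual integral is comparable to the left-hand side. Rearranging gives $\int_0^{1-1/m}(1-r)^{-2}\ho^{-1}(r)\,dr\asymp m/\ho(1-1/m)$, so both halves of $S_m$ contribute the same order.

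Plugging this back together with $\om_{m-1}\asymp\ho(1-1/m)$ yields
\begin{equation*}
\|\mathcal{H}_g\|_{HS}^2\asymp\sum_{m=1}^\infty m|b_m|^2=\int_\D|g'(z)|^2\,dA(z)=\|g-g(0)\|_{\mathcal{D}}^2.
\end{equation*}
Since $\mathcal{H}_g$ is unchanged by adding a constant to $g$ and $\mathcal{D}$ contains all constants, this proves the equivalence $\mathcal{H}_g$ is Hilbert--Schmidt on $A^2_\om$ if and only if $g\in\mathcal{D}$. The main obstacle is the sharp two-sided estimate for $S_m$: the tail $\int_{1-1/m}^1\ho^{-1}(r)\,dr$ need not even be finite without $\om\in\mathcal{M}_2$ (it fails already for the unweighted Bergman weight, in which case $\mathcal{H}_g$ is never Hilbert--Schmidt for nontrivial $g$), and the self-referential integration-by-parts controlling the $n<m$ contribution makes essential use of both the regularity of $\om$ and the $\mathcal{M}_2$ condition to pin down the correct asymptotic.
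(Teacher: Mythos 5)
Your proposal is correct and follows essentially the same route as the paper: compute the Hilbert--Schmidt norm in the orthonormal basis of monomials, reduce everything to the two-sided estimate $\sum_{n}(n+m)^{-2}\om_n^{-1}\asymp (m\,\om_{m-1})^{-1}$, and prove the critical tail bound via Lemma~\ref{Lemma:u_p} (i.e.\ $\om\in\mathcal{M}_2\Leftrightarrow\ho^{-1}\in\R$) together with $\om_n\asymp\ho(1-\tfrac1n)$. The only difference is cosmetic: for the block $n<m$ the paper simply uses that the moments $\om_n$ are nonincreasing (so $\om_n\ge\om_{m-1}$ there), which yields the needed upper bound without your integration-by-parts argument.
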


\begin{proof}
Denote
    $$
    \om_n=\int_0^1 r^{2n+1}\om(r)\,dr,\quad e_n(z)=\frac{z^n}{\sqrt{2}\,\om^{1/2}_n},\quad n\in\N,
    $$
and consider the basis $\left\{e_n\right\}$ of $A^2_\om$. If
$g(z)=\sum_0^{\infty} b_kz^k\in \H(\D)$,
then
    $$
    \left\|\hg(e_n)\right\|^2_{A^2_\om} =\frac{1}{2\om_n}\sum_{k=0}^{\infty}
    \frac{(k+1)^2|b_{k+1}|^2\om_k}{(n+k+1)^2}.
    $$
We claim that
    \begin{equation}\label{eq:suma}
    \sum_{n=0}^{\infty}
    \frac{1}{(n+k+1)^2\om_n}\asymp \frac{1}{(k+1)\om_k},\quad k\in\N.
    \end{equation}
So, assuming this for a moment, we deduce
    \begin{align*}
    \sum_{n=0}^{\infty}\left\|\hg(e_n)\right\|^2_{A^2_\om}& =\sum_{n=0}^{\infty}\frac{1}{2\om_n}\sum_{k=0}^{\infty}
    \frac{(k+1)^2|b_{k+1}|^2\om_k}{(n+k+1)^2}\\
    &=\frac12\sum_{k=0}^{\infty}(k+1)^2|b_{k+1}|^2\om_k
    \sum_{n=0}^{\infty}
    \frac{1}{(n+k+1)^2\om_n}\\
    &\asymp \sum_{k=0}^{\infty}(k+1)|b_{k+1}|^2 \asymp
    \|g-g(0)\|_{\mathcal{D}}^2,
    \end{align*}
which proves the assertion. It remains to prove \eqref{eq:suma}.
Clearly,
    \begin{equation}\label{eq:suma3}
    \sum_{n=k+1}^{\infty} \frac{1}{(n+k+1)^2\om_n}\ge
    \frac{1}{\om_k}\sum_{n=k+1}^{\infty} \frac{1}{(n+k+1)^2}\asymp
    \frac{1}{(k+1)\om_k},\quad k\in\N.
    \end{equation}
On the other hand,
    \begin{equation}\label{eq:suma2}
    \sum_{n=0}^{k}
    \frac{1}{(n+k+1)^2\om_n}\le \frac{1}{\om_k}\sum_{n=0}^{k}
    \frac{1}{(n+k+1)^2}\asymp \frac{1}{(k+1)\om_k},\quad k\in\N.
    \end{equation}
Moreover, since $\om\in\mathcal{M}_2$, Lemma~\ref{Lemma:u_p}
yields
    $$
    \int_r^1 \frac{dt}{\widehat{\om}(t)}\asymp
    \frac{(1-r)}{\widehat{\om}(r)},
    $$
which together with Lemma~\ref{le:condinte}(i) and (iv) gives
    \begin{equation*}
    \begin{split}
    \sum_{n=k+1}^{\infty}\frac{1}{(n+k+1)^2\om_n}
    &\le \sum_{n=k+1}^{\infty}\frac{1}{(n+1)^2\om_n}
    \asymp\sum_{n=k+1}^{\infty}\frac{1}{\widehat{\om}\left(1-\frac{1}{n}\right)}
    \int_{1-\frac{1}{n}}^{1-\frac{1}{n+1}}dt\\
    &\le\int_{1-\frac{1}{k+1}}^1\frac{dt}{\widehat{\om}(t)}
    \asymp\frac{1}{(k+1)\widehat{\om}\left(1-\frac{1}{k+1}\right)}
    \asymp\frac{1}{(k+1)\om_k}.
\end{split}\end{equation*}
This combined with \eqref{eq:suma2} and \eqref{eq:suma3} yields
\eqref{eq:suma} and finishes the proof.
\end{proof}

\section{Further results} \label{further}

\subsection{Descriptions of weighted spaces.}

In view of \cite[Theorem~5.4]{Duren1970} it is natural to expect
that, under appropriate assumptions, the space
$\Lambda\left(q,\a,\widehat{\om}^\eta\right)$ could be
characterized by a weighted $q$-mean Lipschitz condition. We show
that this is indeed the case of those spaces to which the
containment of the symbol $g$ characterizes the boundedness of
$\H_g:A^p_\om\to A^q_\om$ when $1<p\le q<\infty$ and
$\om\in\R\cap\Mp$.

\begin{proposition}\label{pr:Lambdafrontera}
Let $1<q,p<\infty$, $\eta\in\left[0,\frac{1}{p}\right)$,
$\om\in\R\cap\Mp$  and $g\in\H(\D)$. The the following assertions
hold:
\begin{itemize}
\item[\rm(i)]
$g\in\Lambda\left(q,\frac{1}{p},\widehat{\om}^\eta\right)$ if
and only if $g\in H^q$ and
    $$
    \sup_{0<h\le t}\left(\int_0^{2\pi}
    |g(e^{i(\theta+h)})-g(e^{i\theta})|^q \frac{d\theta}{2\pi}\right)^{1/q}=\og(t^{\frac{1}{p}}\ho^{\eta}(1-t)), \quad t\to
    0.
    $$
Moreover,
    $$
    \|g\|_{\Lambda\left(q,\frac{1}{p},\widehat{\om}^\eta\right)}\asymp |g(0)|+
    \sup_{0<h\le t}\frac{\left(\int_0^{2\pi}
    |g(e^{i(\theta+h)})-g(e^{i\theta})|^q \frac{d\theta}{2\pi}\right)^{1/q}}{t^{\frac{1}{p}}\ho^{\eta}(1-t)}.$$
 \item[\rm(ii)]$g\in\lambda(q,\frac{1}{p},\widehat{\om}^\eta)$ if and only if $g\in H^q$ and $$
    \sup_{0<h\le t}\left(\int_0^{2\pi}
    |g(e^{i(\theta+h)})-g(e^{i\theta})|^q \frac{d\theta}{2\pi}\right)^{1/q}=\op(t^{\frac{1}{p}}\ho^{\eta}(1-t)), \quad t\to
    0.
    $$
\end{itemize}
\end{proposition}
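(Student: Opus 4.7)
The plan is to use Corollary~\ref{Lemma:q>p}, which characterises $g\in\Lambda(q,1/p,\ho^\eta)$ (respectively $\lambda(q,1/p,\ho^\eta)$) by $\|\Denw g'\|_{H^q}=\og(M_n^{1-1/p}2^{-n\eta})$ (respectively $\op(M_n^{1-1/p}2^{-n\eta})$). It thus suffices to show that, for $g\in H^q$, this dyadic-block estimate is equivalent to the weighted modulus-of-continuity condition appearing in the proposition, in both the big-O and little-o versions; parts (i) and (ii) then follow.

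For the easy direction (block estimate $\Rightarrow$ modulus of continuity), I would argue directly from the hypothesis $M_q(r,g')\lesssim\ho(r)^\eta(1-r)^{1/p-1}$ via the classical three-term split
\begin{equation*}
g(e^{i(\theta+h)})-g(e^{i\theta})=R_1(\theta)+T(\theta)+R_2(\theta),
\end{equation*}
where $R_1,R_2$ are the radial differences from $e^{i(\theta\pm h)}$ to $(1-h)e^{i(\theta\pm h)}$ and $T$ is the tangential difference at radius $1-h$. Minkowski's inequality, the hypothesis, and the monotonicity of $\ho$ yield $\|R_i\|_{L^q}\lesssim\ho(1-h)^\eta\int_{1-h}^1(1-r)^{1/p-1}\,dr\asymp h^{1/p}\ho(1-h)^\eta$, and similarly $\|T\|_{L^q}\le h\,M_q(1-h,g')\lesssim h^{1/p}\ho(1-h)^\eta$.

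For the hard direction (modulus of continuity $\Rightarrow$ block estimate), I would invoke the smooth partial-sum operators $W_N^\Phi$ of Theorem~\ref{th:cesaro}. Choose a $C^\infty$ function $\Phi_n$ supported in a fixed compact subset of $(0,\infty)$---uniform in $n$ by Lemma~\ref{le:Mncomparable}---with $\Phi_n(k/M_{n+1})=1$ for $k\in I_\om(n)$ and $=0$ for the other integers $k\ge 1$, so that $W^{\Phi_n}_{M_{n+1}}\ast g=\Denw g$. Since the kernel has vanishing mean on $\T$, one may subtract $g(e^{i\theta})$ to get
\begin{equation*}
\Denw g(e^{i\theta})=\frac{1}{2\pi}\int_{-\pi}^{\pi}W^{\Phi_n}_{M_{n+1}}(e^{is})\,[g(e^{i(\theta-s)})-g(e^{i\theta})]\,ds.
\end{equation*}
Minkowski in integral form combined with the decay $|W_{M_{n+1}}^{\Phi_n}(e^{is})|\lesssim M_{n+1}(1+M_{n+1}|s|)^{-m}$ from Theorem~\ref{th:cesaro}(i) with $m$ large, together with the modulus-of-continuity hypothesis, yield $\|\Denw g\|_{H^q}\lesssim M_n^{-1/p}\ho(1-1/M_n)^\eta\asymp M_n^{-1/p}2^{-n\eta}$ by \eqref{rn}; Lemma~\ref{le:A} then lifts this to $\|\Denw g'\|_{H^q}\asymp M_n\|\Denw g\|_{H^q}\lesssim M_n^{1-1/p}2^{-n\eta}$, as required.

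The main obstacle is verifying that the convolution integral truly concentrates at scale $s\asymp 1/M_n$: the factor $\ho(1-s)^\eta$ implicit in the modulus of continuity may grow like $(sM_n)^{\beta\eta}$ for $s\gg 1/M_n$ by Lemma~\ref{le:condinte}(i), and one must pick $m$ in Theorem~\ref{th:cesaro}(i) large enough to swallow this polynomial growth before a dyadic split $s\asymp 2^j/M_n$ sums geometrically; the constraint $\eta<1/p$ is exactly what keeps the whole procedure in the correct range. Part~(ii) is then obtained from the identical argument with $\op$ in place of $\og$ and a standard cutoff of the high dyadic tail.
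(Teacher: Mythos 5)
Your route is genuinely different from the paper's: the paper proves the proposition in a few lines by applying \cite[Theorem~2.1]{BlaSoaJMAA1990} to $\vr(t)=t^{1/p}\ho(1-t)^{\eta}$, the only work being the verification of the Dini and $b_1$ conditions for $\vr$ (this is where $\om\in\Mp$ and $\eta<1/p$ enter, through the essentially increasing function $\ho(r)^{1/p}(1-r)^{-(1-1/p)}$), whereas you rebuild the equivalence from Corollary~\ref{Lemma:q>p} and kernel estimates. Your easy direction (the Hardy--Littlewood three-term split) is correct. The hard direction, however, has a genuine gap: you require a multiplier $\Phi_n$, with support in a fixed compact subset of $(0,\infty)$, taking the value $1$ at $k/M_{n+1}$ for $M_n\le k<M_{n+1}$ and $0$ at all other integers $k\ge1$, so that $W^{\Phi_n}_{M_{n+1}}\ast g=\Denw g$ exactly. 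Since $M_{n+1}-1$ lies in the block but $M_{n+1}$ does not, $\Phi_n$ must pass from $1$ to $0$ on an interval of length $1/M_{n+1}$, so by the mean value theorem $\max_s|\Phi_n^{(m)}(s)|\gtrsim M_{n+1}^{m}$. The decay you invoke, $|W^{\Phi_n}_{M_{n+1}}(e^{is})|\lesssim M_{n+1}(1+M_{n+1}|s|)^{-m}$, is therefore not what Theorem~\ref{th:cesaro}(i) yields: its second bound carries the factor $\max_s|\Phi_n^{(m)}(s)|$ and degenerates to something no better than the trivial bound $M_{n+1}$ on $1/M_{n+1}<|s|<1$. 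Indeed the exact block is a difference of Dirichlet-type projections, whose kernel only decays like $|s|^{-1}$, and then the tail of your convolution estimate is $\int_{1/M_n}^{\pi}|s|^{-1}s^{1/p}\ho(1-s)^{\eta}\,ds\asymp1$, dominated at $s\asymp1$; this gives $\|\Denw g\|_{H^q}=\og(1)$ instead of $\og\bigl(M_n^{-1/p}2^{-n\eta}\bigr)$, so the concentration at scale $1/M_n$, which is the heart of your argument, is not justified as written.

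The repair is standard and stays inside your framework: give up exact reproduction. Choose $\Phi_n\equiv1$ on $[M_n/M_{n+1},1]$ with support in, say, $[2^{-\g_2-1},2]$ and transition zones of length bounded below uniformly in $n$ (possible by Lemma~\ref{le:Mncomparable}), so that $A_{\Phi_n}$ and all $\max_s|\Phi_n^{(m)}(s)|$ are uniform in $n$; the kernel still has vanishing mean since $\Phi_n$ vanishes near $0$. Then $W^{\Phi_n}_{M_{n+1}}\ast g\ne\Denw g$, but $\Denw\bigl(W^{\Phi_n}_{M_{n+1}}\ast g\bigr)=\Denw g$, and the M.~Riesz projection theorem gives $\|\Denw g\|_{H^q}\lesssim\|W^{\Phi_n}_{M_{n+1}}\ast g\|_{H^q}$. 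From there your dyadic splitting of the convolution against $\omega_q(g,s)\lesssim s^{1/p}\ho(1-s)^{\eta}$, with $m>1+\frac1p+\b\eta$ and Lemma~\ref{le:condinte}(i) to absorb $\ho(1-s)^{\eta}\lesssim(sM_{n+1})^{\b\eta}\ho(1-1/M_{n+1})^{\eta}$, does produce $M_n^{-1/p}2^{-n\eta}$ via \eqref{rn}, and Lemma~\ref{le:A} (after the harmless index shift between the block of $g'$ and the corresponding block of $g$) finishes the big-O case; the little-o case follows by truncating the high frequencies as you indicate. One small misattribution: in this kernel argument $\eta<\frac1p$ plays no role (any $\eta\ge0$ works once $m$ is large); the restrictions $\om\in\Mp$ and $\eta<\frac1p$ are needed because Corollary~\ref{Lemma:q>p} needs them, exactly as they are needed for the $b_1$-condition in the paper's shorter proof.
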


\begin{proof}
The proof of (i) consists  of a direct application of
\cite[Theorem~2.1(i)]{BlaSoaJMAA1990}. First, observe that if $g\in\Lambda\left(q,\frac{1}{p},\widehat{\om}^\eta\right)$, then $g\in H^q$.
Now, if we choose
    $$
    \vr(t)=t^{\frac{1}{p}}\ho^{\eta}(1-t), \quad 0\le t< 1,
    $$
it suffices to show that $\vr$ satisfies both, the {\em{Dini}
condition}
    \begin{equation}\label{Dini}
    \int_0^t \frac{\vr(s)}{s}\,ds\lesssim\vr(t), \quad 0<t<1,
    \end{equation}
and the {\em $b_1$-condition}
    \begin{equation}\label{b1}
    \int_t^1 \frac{\vr(s)}{s^2}\,ds\lesssim\frac{\vr(t)}{t},\quad 0<t< 1.
    \end{equation}
By Lemma~\ref{le:condinte}(i), we deduce the inequality
    \begin{equation*}
    \begin{split}
    \int_0^t s^{\frac{1}{p}-1}\left(\frac{\ho(1-s)}{\ho(1-t)}\right)^\eta\,ds
    \le\frac{1}{t^{\a\eta}}\int_0^t s^{\frac{1}{p}+\a\eta-1}\,ds
    =\frac{t^{\frac{1}{p}}}{\frac{1}{p}+\a\eta},
    \end{split}
    \end{equation*}
which is equivalent to \eqref{Dini}. Moreover, by using the fact
that $\frac{\ho(r){\frac{1}{p}}}{(1-r)^{1-\frac{1}{p}}}$ is
essentially increasing (see the proof of
Lemma~\ref{le:serie}(iii)) and again Lemma~\ref{le:condinte}(i),
we obtain
    \begin{equation*}
    \begin{split}
    \int_t^1\frac{\varrho(s)}{s^2}\,ds&=\int_0^{1-t}\frac{\ho(r)^\eta}{(1-r)^{2-\frac{1}{p}}}\,dr
    \lesssim\frac{\ho(1-t)^{\frac{1}{p}}}{t^{1-\frac{1}{p}}}\int_0^{1-t}\frac{dr}{(1-r)\ho^{\frac{1}{p}-\eta}(r)}\\
    &=\frac{\ho^{\frac{1}{p}}(1-t)}{\ho(1-t)^{\frac{1}{p}-\eta}t^{1-\frac{1}{p}}}\int_0^{1-t}
    \frac{\ho^{\frac{1}{p}-\eta}(1-t)}{\ho^{\frac{1}{p}-\eta}(r)}\frac{dr}{1-r}\\
    &\le\frac{\ho^{\eta}(1-t)t^{\a\left(\frac{1}{p}-\eta\right)}}{t^{1-\frac{1}{p}}}
    \int_0^{1-t}\frac{dr}{(1-r)^{1+\a\left(\frac{1}{p}-\eta\right)}}\\
    &\lesssim\frac{\ho^{\eta}(1-t)}{t^{1-\frac{1}{p}}}=\frac{\varrho(t)}{t},\quad
    0<t<1,
    \end{split}
    \end{equation*}
which is \eqref{b1}.

With the proof of \cite[Theorem~2.1]{BlaSoaJMAA1990} in hand, the
second assertion (ii) can be proved in an analogous manner with
minor modifications.
\end{proof}

If we choose $\om(r)=(1-r^2)^\alpha$, where $-1<p-2<\a<\infty$,
then Theorem~\ref{mainth}(i) and
Proposition~\ref{pr:Lambdafrontera} show that $\hg: A^p_\a \to
A^q_\a$, $q\ge p$, is bounded if and only if $g$ belongs to the
mean Lipschitz space
$\Lambda\left(q,\frac{2+\a}{p}-\frac{1+\a}{q}\right)$.

With respect to the condition that characterizes the bounded
operators $\hg: A^p_\om\to A^q_\om$, when $1<q<p<\infty$ and
$\om\in\R\cap\Mp$, it is worth noticing that
    \begin{equation}\label{qmenorpequiv}
    g'\in H\left(q,s,\ho_{s\left(1-\frac{1}{q}\right)}\right)
    \Leftrightarrow g\in
    H\left(q,s,\ho_{-\frac{s}{q}}\right)
    \end{equation}
by \cite[Theorem~1.1]{PavP}, provided that
$(1-r)^{-\frac{s}{q}}\ho(r)\in\R$. This last requirement may
happen only if $q<p-1$, because
    $$
    (1-r)^{-\frac{s}{q}}\ho(r)=\frac{\ho(r)}{(1-r)^{p-1}}\frac{1}{(1-r)^{1+\frac{p}{p-q}-p}}
    $$
and $\frac{\ho(r)}{(1-r)^{p-1}}$ is essentially increasing. In
particular, the previous argument says that \eqref{qmenorpequiv}
does not hold if $1<p\le 2$. It is also worth noticing that for
the standard weight $\om(r)=(1-r)^\a$ that belongs to $\Mp$, the
equivalence \eqref{qmenorpequiv} is satisfied when
$p-2>\a>\frac{p}{p-q}-2$.

\subsection{Analysis on the Muckenhoupt type condition}

We saw in Theorem~\ref{th:gorro} that the Muckenhoupt type
condition \eqref{Mpcondition} characterizes the boundedness of
both the Hilbert operator $\H$ and the sublinear Hilbert operator
$\widetilde{\H}$ from $L^p_{\widehat{\om}}$ to $A^p_\om$ whenever
$\om$ is regular and satisfies the integral condition \eqref{99},
and further, that the quantity $\mathcal{M}_p(\om)$ is comparable
to the operator norm in both cases. Both integral conditions
\eqref{99} and \eqref{Mpcondition} restrict the behavior of the
inducing weight $\om$ in their own way and thus also affect to the
nature of the spaces $L^p_{\widehat{\om}}$ and $A^p_\om$ as well.
To understand these conditions better, we compare them to the
pointwise behavior of the quotient $\p_\om(r)/(1-r)$ appearing in
the definitions of the regular and rapidly increasing weights.

\begin{lemma}\label{Lemma:limits}
Let $\om$ a continuous radial weight   and $1<p<\infty$.
\begin{itemize}
\item[\rm(i)]If \eqref{99} holds and
    $$
    \liminf_{r\to1^-}\frac{\psi_\om(r)}{1-r}>\frac1{p-1},
    $$
then $\om\in\Mp$. \item[\rm(ii)] If \eqref{99} holds and
    $$
    \lim_{r\to1^-}\frac{\psi_\om(r)}{1-r}=\frac1{p-1},
    $$
then $\om\notin\Mp$.
\item[\rm(iii)] If there exists $r^\star\in(0,1)$ such that
    $$
    \frac{\psi_\om(r)}{1-r}\le\frac1{p-1},\quad r^\star\le r<1,
    $$
then \eqref{99} does not hold.
\end{itemize}
\end{lemma}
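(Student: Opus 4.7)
All three parts reduce to analyzing the distortion function of $\ho^{-1/(p-1)}$ via the equivalence $\om\in\Mp\Leftrightarrow\ho^{-1/(p-1)}\in\R$ from Lemma~\ref{Lemma:u_p}(ii). In each case the pointwise hypothesis on $\psi_\om(r)/(1-r)$, rewritten using $\om=-\ho'$, becomes a differential inequality for $\log\ho$ that integrates to a power-type comparison between $\ho(r)$ and $1-r$. Since $\ho^{-1/(p-1)}$ is increasing, the lower bound $\psi_{\ho^{-1/(p-1)}}(r)\ge 1-r$ is automatic, so the task reduces to the matching upper bound.

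For (i), pick $c>1/(p-1)$ and $r_0\in[0,1)$ with $\psi_\om(r)\ge c(1-r)$ on $[r_0,1)$. The inequality reads $(\log\ho)'(r)\ge-1/(c(1-r))$ and integrates to $\ho(r')\ge\ho(r)((1-r')/(1-r))^{1/c}$ for $r_0\le r\le r'$. Taking the power $-1/(p-1)$ and integrating in $r'$ over $[r,1)$---convergent because $1/(c(p-1))<1$---yields $\int_r^1\ho(t)^{-1/(p-1)}\,dt\lesssim(1-r)\ho(r)^{-1/(p-1)}$, so $\psi_{\ho^{-1/(p-1)}}(r)\asymp 1-r$ and $\om\in\Mp$. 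Part (iii) is the mirror computation: the opposite inequality gives $(\log\ho)'(r)\le-(p-1)/(1-r)$ on $[r^\star,1)$, hence $\ho(r)\le C(1-r)^{p-1}$ and $\ho(r)^{-1/(p-1)}\ge C/(1-r)$; the divergence of $\int_{r^\star}^1 dt/(1-t)$ contradicts \eqref{99}.

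For (ii), the aim is to show that $\psi_{\ho^{-1/(p-1)}}(r)/(1-r)$ is unbounded. Set $w=\ho^{-1/(p-1)}$ and $s=-\log(1-r)$. From $\psi_\om=w/((p-1)w')$ the hypothesis translates to $d(\log w)/ds\to 1$ as $s\to\infty$, so defining $L$ via $w(r)=L(s)/(1-r)$ produces a positive $C^1$ function of $s$ with $L'(s)/L(s)\to 0$, while condition \eqref{99} becomes $\int_0^\infty L(u)\,du<\infty$. A direct substitution then yields $\psi_{\ho^{-1/(p-1)}}(r)/(1-r)=P(s)/L(s)$, where $P(s):=\int_s^\infty L(u)\,du$.

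The heart of the argument, and its main obstacle, is showing $P(s)/L(s)\to\infty$. Assume for contradiction that $P(s)\le KL(s)$ for every $s\ge s_0$. Since $P'=-L$, this forces $(\log P)'(s)\le -1/K$ and hence $P(s)\lesssim e^{-s/K}$, so $L(s)=-P'(s)\lesssim e^{-s/K}$ as well. However, $L'/L\to 0$ implies by direct integration that $(\log L(s))/s\to 0$, contradicting the exponential decay just derived. Therefore $P/L$ is unbounded, $\ho^{-1/(p-1)}\notin\R$, and $\om\notin\Mp$ by Lemma~\ref{Lemma:u_p}(ii), finishing the proof.
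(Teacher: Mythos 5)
Parts (i) and (iii) are correct and essentially the paper's own argument: integrating the logarithmic derivative of $\ho$ over $[r,t]$ gives $\ho(r)\lesssim\left(\frac{1-r}{1-t}\right)^{1/c}\ho(t)$ in (i) and $\ho(r)\gtrsim\left(\frac{1-r}{1-t}\right)^{p-1}\ho(t)$ in (iii), which yield respectively $\ho^{-\frac1{p-1}}\in\R$ (hence $\om\in\Mp$ by Lemma~\ref{Lemma:u_p}) and the divergence of $\int_{r^\star}^1\ho(t)^{-\frac1{p-1}}\,dt$, contradicting \eqref{99}; the paper phrases exactly the same computation through the monotonicity of $\ho(r)(1-r)^{-1/d}$. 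For (ii) you take a genuinely different route: the paper settles it in one line by the Bernoulli--l'H\^opital theorem combined with Lemma~\ref{Lemma:u_p}, whereas you substitute $s=-\log(1-r)$, write $\ho(r)^{-\frac1{p-1}}=L(s)e^{s}$, and correctly reduce the claim to showing $P(s)/L(s)\to\infty$, where $P(s)=\int_s^\infty L(u)\,du<\infty$ by \eqref{99} and $L'/L\to0$ by the hypothesis. The identities $\psi_\om=w/((p-1)w')$, $\psi_w(r)/(1-r)=P(s)/L(s)$ and the translation of \eqref{99} are all verified correctly, and this gives a self-contained elementary proof where the paper only sketches.

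There is, however, one invalid step in your contradiction argument for (ii): from $P(s)\lesssim e^{-s/K}$ you conclude ``so $L(s)=-P'(s)\lesssim e^{-s/K}$ as well.'' A pointwise bound on $P$ gives no pointwise bound on its derivative, and the standing assumption $P\le KL$ bounds $L$ from \emph{below}, not above, so this inference does not follow as written. It can be repaired by invoking $L'/L\to0$ at this point: for $s$ large, $L(u)\ge e^{-\e}L(s)$ on $[s,s+1]$, hence $L(s)\lesssim P(s)\lesssim e^{-s/K}$, and the contradiction with $\log L(s)/s\to0$ goes through. But once slow variation is used this way, the detour through a contradiction is unnecessary: for any $\e>0$ and $s\ge s_\e$ one has $L(u)\ge L(s)e^{-\e(u-s)}$ for all $u\ge s$, so $P(s)\ge L(s)/\e$, which shows $P/L\to\infty$ directly. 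With this one-line repair, part (ii) is correct.
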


\begin{proof}
(i). By the assumption, there exist $d>\frac1{p-1}$ and
$r_0\in(0,1)$ such that $\frac{\psi_\om(r)}{1-r}\ge d$ on
$[r_0,1)$. Therefore the differentiable function
$h_d(r)=\frac{\widehat{\om}(r)}{(1-r)^\frac1d}$ is increasing on
$[r_0,1)$, and hence
    \begin{equation*}
    \widehat{\om}(r)\lesssim
    \left(\frac{1-r}{1-t}\right)^\frac1d\widehat{\om}(t),\quad
    0\le r\le t<1.
    \end{equation*}
It follows that
    \begin{equation*}
    \begin{split}
    \int_{r}^1\widehat{\om}(t)^{-\frac{1}{p-1}}dt
    &\lesssim\widehat{\om}(r)^{-\frac1{p-1}}(1-r)^{\frac1{d(p-1)}}\int_r^1(1-t)^{-\frac1{d(p-1)}}\,dt
    \asymp(1-r)\widehat{\om}(r)^{-\frac{1}{p-1}}.
    \end{split}
    \end{equation*}
Since trivially,
    $$
    \int_{r}^1\widehat{\om}(t)^{-\frac{1}{p-1}}\,dt\ge(1-r)\widehat{\om}(r)^{-\frac{1}{p-1}},
    $$
we deduce $\ho^{-\frac{1}{p-1}}\in\R$, and thus $\om\in\Mp$ by
Lemma~\ref{Lemma:u_p}.

(ii). The assertion follows by the Bernouilli-l'H\^{o}pital
theorem and Lemma~\ref{Lemma:u_p}(i).

(iii). The assumption yields
    \begin{equation*}
    \widehat{\om}(r)\gtrsim
    \left(\frac{1-r}{1-t}\right)^{p-1}\widehat{\om}(t),\quad
    0\le r\le t<1,
    \end{equation*}
and hence
    \begin{equation*}
    \begin{split}
    \int_{r}^1\widehat{\om}(t)^{-\frac{1}{p-1}}dt
    &\gtrsim\widehat{\om}(r)^{-\frac1{p-1}}(1-r)\int_r^1\frac{dt}{1-t}=\infty.
    \end{split}
    \end{equation*}
\end{proof}

It is worth noticing that there there exists regular weights $\om$
such that $\lim_{r\to1^-}\frac{\psi_\om(r)}{1-r}$ does not exist.
The weight $\om$, defined by the identity
    $$
    \int_r^1 \om(s)\,ds=2(1-r)\cos\left(\frac{1}{(1-r)^{1/2}}\right)+16(1-r)^{1/2},
    $$
gives is a concrete example.

The bigger the limit $\lim_{r\to1^-}\frac{\psi_\om(r)}{1-r}$ is
(if it exists), the smaller the space $A^p_\om$ is. Therefore, in
view of Lemma~\ref{Lemma:limits}, Theorem~\ref{mainth}(i) says,
roughly speaking, that the Hilbert operator $\H$ is well defined
and bounded on $A^p_\om$ for $\om\in\R$ whenever the space is
small enough. It is known that the weighted Bergman space
$A^p_\om$ induced by a rapidly increasing weight $\om$ lies closer
to the Hardy space $H^p$ than any classical weighted Bergman space
$A^p_\a$~\cite{PelRat}. Therefore, by the observation above and
results in \cite{GaGiPeSis}, it is natural to expect that if
$\om\in\I$ (and satisfies some local regularity requirement), then
$\hg$ is bounded on $A^p_\om$ if and only if $g$ belongs to the
mean Lipschitz space $\Lambda\left(p,\frac{1}{p}\right)$. The
proof of Theorem~\ref{mainth} with minor modifications show that
$g\in\Lambda\left(p,\frac{1}{p}\right)$ is indeed a necessary
condition for $\hg: A^p_\om\to A^p_\om$ to be bounded when
$\om\in\I$. It is also appropriate to mention that the question of
characterizing the bounded operators $\hg$ on $A^p_\om$ with
$\om\in\I$ is more likely related to the open problem of
describing those $g\in \H(\D)$ such that $\hg: H^p\to H^p$ is
bounded in the case $2<p<\infty$~\cite{GaGiPeSis}.

\bigskip

{\bf{Acknowledgements}} We would like to thank Aristos Siskakis for introducing the generalized Hilbert operator to us.  We would also like to thank Francisco
J- Mart\'{\i}n-Reyes for some helpful discussions on the theory of weights.

\bibliographystyle{elsarticle-num}

\end{document}